\newtheorem{theorem}{Theorem}[section]
\newtheorem{lemma}[theorem]{Lemma}
\newtheorem{proposition}[theorem]{Proposition}
\newtheorem{corollary}[theorem]{Corollary}
\theoremstyle{definition}
\newtheorem{definition}[theorem]{Definition}
\theoremstyle{remark}
\newtheorem{remark}[theorem]{Remark}
\numberwithin{equation}{section}
\begin{document}
\begin{flushleft}
	Stefan Ivkovi\'{c}
\end{flushleft}
The Mathematical Institute of the Serbian Academy of Sciences and Arts, \\
p.p. 367, Kneza Mihaila 36, 11000 Beograd, Serbia,\\
Tel.: +381-69-774237 \\
\email{stefan.iv10@outlook.com}\\

\textbf{\begin{center}
		On various generalizations of  semi-$\mathcal{A}$-Fredholm operators
\end{center}}	
\title{On various generalizations of  semi-$\mathcal{A}$-Fredholm operators }

\vspace{20pt}

\textbf{Abstract} Starting from the definition of $\mathcal{A}$-Fredholm and semi-$\mathcal{A}$-Fredholm operator on the standard module over a unital $C^{*}$ algebra $\mathcal{A}$, introduced in \cite{MF} and \cite{I}, we construct various generalizations of these operators  and obtain several results as an analogue or a generalization of some of the results in \cite{BS}, \cite{BM}, \cite{DDj2}, \cite{KY}. Moreover, we study also non-adjointable semi-$\mathcal{A}$-Fredholm operators as a natural continuation of the work in \cite{IM} on non-adjointable $\mathcal{A}$-Fredholm operators and obtain an analogue or a generalization in this setting of the results in \cite{I}, \cite{S}.

\vspace{15pt}
\textbf{Keywords} Generalized $\mathcal{A}$-Fredholm operator, generalized $\mathcal{A}$-Weyl operator, semi-$\mathcal{A}$-$B$-Fredholm operator, non-adjointable semi-$\mathcal{A}$-Fredholm operator.

\vspace{15pt}
\textbf{Mathematics Subject Classification (2010)} Primary MSC 47A53; Secondary MSC 46L08.

\begin{abstract}
Abstract

\textbf{Keywords} Hilbert C*-modules, semi-Fredholm operators, compressions, Weyl spectrum\\
\textbf{Mathematics Subject Classification (2010)} Primary MSC 47A53; Secondary MSC 46L08\\	
\end{abstract}

\section{Introduction }
Various generalizations of Fredholm and Weyl operators have been considered in several papers, such as \cite{BS}, \cite{BM}, \cite{DDj2}, \cite{KY}.\\
In \cite{KY} K.W. Yang has introduced the following definition of generalized Fredholm operator from Banach space $X$ into a Banach space $Y:$ \\
An operator $T \in B(X,Y)$ is generalized Fredholm if $T(X)$ is closed in $Y$, and $\ker T  $ and Coker $ T$ are reflexive.\\
Then he has obtained several results concerning these operators such as:
\begin{theorem} \label{T05}  
	\cite[Theorem 5.3]{KY} If $S \in B(X,Y)  $ and $T \in B(Y,Z)  $ are generalized Fredholm and $TS$ has a closed range, then $TS$ is generalized Fredholm.
\end{theorem}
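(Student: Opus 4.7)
The plan is to verify the three defining conditions of a generalized Fredholm operator for $TS$: closed range (given by hypothesis), reflexive kernel, and reflexive cokernel. The two reflexivity statements will be deduced from two short exact sequences of Banach spaces together with the standard permanence property that an extension of reflexive Banach spaces is reflexive, and that closed subspaces and Hausdorff quotients of reflexive Banach spaces are reflexive.

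For the kernel, I would first observe that $S$ maps $\ker(TS)$ onto $S(X)\cap \ker T$ with kernel $\ker S$, giving a short exact sequence
\begin{equation*}
0 \longrightarrow \ker S \longrightarrow \ker(TS) \stackrel{S}{\longrightarrow} S(X)\cap \ker T \longrightarrow 0.
\end{equation*}
Since $S(X)$ is closed in $Y$ (as $S$ is generalized Fredholm) and $\ker T$ is closed, the range $S(X)\cap \ker T$ is closed in $Y$, and by the open mapping theorem applied to the induced map $\ker(TS)/\ker S \to S(X)\cap \ker T$, this sequence is an extension of Banach spaces. The right-hand term is a closed subspace of the reflexive space $\ker T$ and hence reflexive, the left-hand term is reflexive by hypothesis, so $\ker(TS)$ is reflexive.

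For the cokernel, I would use the inclusion $TS(X)\subseteq T(Y)\subseteq Z$, noting that $TS(X)$ is closed in $Z$ by assumption (and hence in $T(Y)$). This gives the short exact sequence
\begin{equation*}
0 \longrightarrow T(Y)/TS(X) \longrightarrow Z/TS(X) \longrightarrow Z/T(Y) \longrightarrow 0.
\end{equation*}
The right-hand term is reflexive because $T$ is generalized Fredholm. For the left-hand term, the operator $T$ induces a continuous surjection $\bar T \colon Y/S(X) \to T(Y)/TS(X)$ (well-defined since $T(S(X))\subseteq TS(X)$), so $T(Y)/TS(X)$ is a Banach-space quotient of the reflexive space $Y/S(X)$ and is therefore reflexive. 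Applying the extension permanence again yields reflexivity of $Z/TS(X)$, completing the verification.

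The only potential obstacle is the verification that the algebraic short exact sequences above are genuinely sequences in the category of Banach spaces, i.e.\ that the relevant subspaces are closed and the induced bijections are topological isomorphisms. Both points are handled by the hypothesis that $TS(X)$ is closed (which is why this assumption appears in the statement) together with the open mapping theorem; once these topological identifications are in place, the proof is a direct diagram-chase using only the stability of reflexivity under closed subspaces, Hausdorff quotients, and extensions.
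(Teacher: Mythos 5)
Your argument is correct, but note that the paper itself gives no proof of this statement: Theorem \ref{T05} is quoted verbatim from Yang's paper as motivation, and what the paper actually proves is the Hilbert $C^{*}$-module analogue, Lemma \ref{L01}, where reflexivity is replaced by self-duality. Your proof is sound as a self-contained Banach-space argument: the identification $S(\ker TS)=S(X)\cap\ker T$ yields the exact sequence $0\to\ker S\to\ker(TS)\to S(X)\cap\ker T\to 0$, the filtration $TS(X)\subseteq T(Y)\subseteq Z$ yields $0\to T(Y)/TS(X)\to Z/TS(X)\to Z/T(Y)\to 0$, and the closed-range hypotheses together with the open mapping theorem make these genuine extensions of Banach spaces, so the three-space property of reflexivity (together with its stability under closed subspaces and quotients by closed subspaces) finishes the proof. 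Structurally this runs parallel to the paper's proof of Lemma \ref{L01}, which decomposes $\ker DF=\ker F\oplus\tilde W$ with $\tilde W\cong\ker D\cap \mathrm{Im}\,F$ and $\mathrm{Im}\,DF^{\perp}=\mathrm{Im}\,D^{\perp}\oplus X$ with $\mathrm{Im}\,F^{\perp}\cong X\oplus M$; the difference is that in the module setting one cannot quotient freely, so the paper must produce actual complemented direct-sum decompositions (using \cite[Theorem 2.3.3]{MT} and the fact that a self-dual submodule is an orthogonal direct summand), whereas your exact-sequence formulation exploits the fact that reflexivity, unlike self-duality of Hilbert modules, passes cleanly through quotients. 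Your version is the more economical one for the Banach-space statement; the paper's version is forced on it by the absence of a quotient/three-space machinery for self-dual modules.
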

\begin{theorem} \label{T06}  
	\cite[Theorem 5.4]{KY} Suppose $S \in B(X,Y)  $ and $  T \in B(Y,Z)$ are range closed, and suppose $ TS \in B(X,Z) $ is generalized Fredholm. Then, \\
	$(i)$ $S$ is generalized Fredholm $\Leftrightarrow T$ is generalized Fredholm;\\
	$(ii)$ if $\ker T$ is reflexive, then both $S$ and $T$ are generalized Fredholm;\\
	$(iii)$ if Coker $S$ is reflexive, then both $S$ and $T$ are generalized Fredholm.
\end{theorem}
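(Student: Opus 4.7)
The plan is to deduce all three parts from a diagram chase built on two stability properties of reflexivity for Banach spaces: a closed subspace or a Banach-space quotient of a reflexive space is reflexive, and reflexivity is a three-space property (if $0\to A\to B\to C\to 0$ is a short exact sequence of Banach spaces, then $B$ is reflexive whenever both $A$ and $C$ are). Because these facts need genuine Banach-space structure, my first concern is to verify that every subspace and quotient entering the argument is closed.

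The backbone consists of one kernel sequence and two cokernel sequences. First I would set up
\[
0 \to \ker S \to \ker(TS) \xrightarrow{S} \ker T \cap S(X) \to 0,
\]
noting that $x\in\ker(TS)$ iff $Sx\in\ker T$, so $S$ restricts to a surjection onto $\ker T \cap S(X)$; that intersection is closed in $Y$, and the open mapping theorem makes the sequence topologically exact. Second, since $T(Y)$ and $(TS)(X)$ are closed in $Z$, the quotient $T(Y)/(TS)(X)$ is a Banach space and the induced map $\bar T:Y/S(X)\to T(Y)/(TS)(X)$ is well defined and bounded, yielding
\[
0 \to (\ker T + S(X))/S(X) \to Y/S(X) \xrightarrow{\bar T} T(Y)/(TS)(X) \to 0,
\]
\[
0 \to T(Y)/(TS)(X) \to Z/(TS)(X) \to Z/T(Y) \to 0.
\]
The identity $\ker\bar T = (\ker T + S(X))/S(X)$ forces $\ker T + S(X)$ to be closed in $Y$, and the open mapping theorem then identifies $(\ker T + S(X))/S(X)$ topologically with $\ker T / (\ker T\cap S(X))$, bridging the kernel and cokernel sides.

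Given these sequences part (i) becomes a routine chase. If $S$ is generalized Fredholm, the kernel sequence makes $\ker T \cap S(X)$ reflexive as a quotient of $\ker(TS)$; the first cokernel sequence exhibits $\ker T/(\ker T\cap S(X))$ as a closed subspace of the reflexive $Y/S(X)$ and $T(Y)/(TS)(X)$ as its reflexive quotient; the extension property then gives $\ker T$ reflexive; and the second cokernel sequence gives $Z/T(Y)$ reflexive as a quotient of $Z/(TS)(X)$. The reverse implication is symmetric: $\ker S$ is reflexive as a closed subspace of $\ker(TS)$, and $Y/S(X)$ sits in a short exact sequence whose outer terms $\ker T/(\ker T\cap S(X))$ and $T(Y)/(TS)(X)$ are both reflexive (the former as a quotient of the reflexive $\ker T$, the latter as a closed subspace of the reflexive $Z/(TS)(X)$), so $Y/S(X)$ is reflexive by extension.

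For (ii) and (iii) the plan is to show that the extra hypothesis already certifies one of the two operators as generalized Fredholm, after which part (i) transfers the conclusion to the other. In (ii) reflexivity of $\ker T$ is assumed and the second cokernel sequence yields $Z/T(Y)$ reflexive as a quotient of $Z/(TS)(X)$, so $T$ qualifies. In (iii) $\ker S$ is automatically reflexive as a closed subspace of $\ker(TS)$ and $\operatorname{Coker}S$ is reflexive by hypothesis, so $S$ qualifies. The main obstacle is not conceptual but rather technical bookkeeping: confirming that each purely algebraic isomorphism between subquotients is genuinely a topological isomorphism of Banach spaces, which hinges on the closedness of $(TS)(X)$ in $T(Y)$ and of $\ker T + S(X)$ in $Y$, after which the open mapping theorem does the rest.
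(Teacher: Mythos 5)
Your argument is correct, but be aware that the paper itself contains no proof of Theorem \ref{T06}: it is quoted from Yang's paper purely as motivation, and what the paper actually proves is the Hilbert $C^{*}$-module analogue, Proposition \ref{P05}, with reflexivity replaced by self-duality and closed range by closed image. Measured against that proof, your route is genuinely different. You prove (i) first by a diagram chase through three short exact sequences (the kernel sequence $0\to\ker S\to\ker(TS)\to\ker T\cap S(X)\to 0$ together with the two cokernel sequences), and you must invoke the three-space property of reflexivity because closed subspaces of Banach spaces need not be complemented; (ii) and (iii) then follow by certifying one factor and transferring via (i). The paper's proof of Proposition \ref{P05} runs in the opposite order: it establishes the analogue of (ii) directly by exhibiting $\ker F$ as an orthogonal direct summand of $\ker (DF)$, $(\mathrm{Im}\, D)^{\perp}$ as an orthogonal direct summand of $(\mathrm{Im}\, DF)^{\perp}$, and $(\mathrm{Im}\, F)^{\perp}$ as isomorphic to an orthogonal sum $N\oplus M^{\prime}$ of summands of self-dual modules; the analogue of (iii) is obtained by passing to adjoints, and the analogue of (i) is then deduced from the two one-sided statements. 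The structural reason for the divergence is that in the adjointable module setting every relevant closed submodule is orthogonally complementable by \cite[Theorem 2.3.3]{MT}, so the extension step that costs you the classical but nontrivial three-space theorem reduces there to the observation that an orthogonal direct sum of two self-dual modules is self-dual. Your version buys the full Banach-space generality of Yang's theorem; the paper's buys shorter decomposition arguments at the price of needing orthogonal complements. The two technical points you flag yourself, the closedness of $\ker T+S(X)$ and the topological rather than merely algebraic identification $(\ker T+S(X))/S(X)\cong\ker T/(\ker T\cap S(X))$, are handled correctly via the kernel of $\bar T$ and the open mapping theorem.
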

\begin{theorem} \label{T07}  
	\cite[Theorem 5.5]{KY} Let $T \in B(X,Y)  $ have a closed range. If there exist $S$ $S^{\prime} \in B(Y,X) $ with closed ranges such that $ST$ and $TS^{\prime}$ are generalized Fredholm, then $T$ is generalized Fredholm.
\end{theorem}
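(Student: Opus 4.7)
The plan is to exploit the two standard hereditary properties of reflexive Banach spaces: a closed subspace of a reflexive space is reflexive, and the quotient of a reflexive space by a closed subspace is reflexive. Since $T$ already has closed range by hypothesis, it suffices to establish reflexivity of $\ker T$ and of the cokernel $Y/T(X)$.

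For the kernel, I would observe the trivial inclusion $\ker T \subseteq \ker(ST)$. Because $ST$ is generalized Fredholm, $\ker(ST)$ is reflexive; and $\ker T$ is clearly closed in $\ker(ST)$ (it is the preimage of $\{0\}$ under a continuous map restricted to $\ker(ST)$, or simply a closed subspace of $X$ contained in $\ker(ST)$). Hence $\ker T$ is reflexive.

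For the cokernel, I would use the symmetric observation that $TS'(Y) \subseteq T(X)$. Since both $T(X)$ and $TS'(Y)$ are closed in $Y$ (the former by hypothesis, the latter because $TS'$ is generalized Fredholm, hence has closed range), $T(X)/TS'(Y)$ is a well-defined closed subspace of $Y/TS'(Y)$. The space $Y/TS'(Y)$ is reflexive by the generalized Fredholmness of $TS'$, so $T(X)/TS'(Y)$ is reflexive as a closed subspace. By the standard third-isomorphism identification
\[
Y/T(X) \;\cong\; \bigl(Y/TS'(Y)\bigr)\big/\bigl(T(X)/TS'(Y)\bigr),
\]
the cokernel $Y/T(X)$ is a quotient of a reflexive space by a closed subspace, hence reflexive.

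Putting the three pieces together yields the conclusion. The main subtlety — if one wants to call it that — is the verification that $T(X)/TS'(Y)$ is closed in $Y/TS'(Y)$, which follows at once from $T(X)$ being closed in $Y$; and that the assumption $S'$ has closed range is not actually needed for this part of the argument, only the closedness of $\mathrm{Ran}(TS')$, which is built into the hypothesis that $TS'$ is generalized Fredholm. So the role of the closed-range assumptions on $S$ and $S'$ is essentially to make the hypotheses on $ST$ and $TS'$ non-vacuous rather than to enter the proof of $T$'s generalized Fredholmness itself.
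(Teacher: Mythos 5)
Your argument is correct. Note that the paper only quotes this statement from \cite{KY} without reproving it; its in-paper counterpart is Lemma \ref{L09} (the Hilbert-module analogue), and your proof has exactly the same architecture as that lemma and as Proposition \ref{P05} on which it relies: the kernel condition is extracted from the composition in which $T$ is the inner factor (via $\ker T\subseteq\ker(ST)$), and the cokernel condition from the composition in which $T$ is the outer factor (via $TS'(Y)\subseteq T(X)$). The only difference is in the hereditary properties invoked: you use that reflexivity passes to closed subspaces and to quotients by closed subspaces, whereas the paper, working with self-duality in Hilbert $C^{*}$-modules where mere closed submodules need not inherit the property, must first produce orthogonal (or complementable) direct summands and, for the kernel half of Lemma \ref{L09}, routes the argument through adjoints instead of using the inclusion of kernels directly. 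Your closing observation is also accurate: the closedness of $\mathrm{Ran}(S)$ and $\mathrm{Ran}(S')$ never enters your proof, only the closedness of $\mathrm{Ran}(ST)$ and $\mathrm{Ran}(TS')$ already packaged into the generalized Fredholm hypotheses; the analogous hypotheses in Lemma \ref{L09} are genuinely used there only because the adjoint step needs $\mathrm{Im}\,F^{*}$ and $\mathrm{Im}\,(D')^{*}$ closed.
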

\begin{theorem} \label{T08}  
	\cite[Theorem 5.6]{KY} Let $T \in B(X,Y)  $ be range closed. Then, $T$ is generalized Fredholm $\Leftrightarrow T^{*}$ is generalized Fredholm.
\end{theorem}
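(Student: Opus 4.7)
The plan is to use the classical closed range theorem together with the standard duality isomorphisms relating the kernel and cokernel of $T$ with those of $T^*$, and finally invoke the fact that a Banach space is reflexive if and only if its continuous dual is reflexive.

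First I would record the three ingredients that do all the work. By the closed range theorem, $T(X)$ is closed in $Y$ if and only if $T^*(Y^*)$ is closed in $X^*$, and when this holds one has the identifications $T^*(Y^*)=(\ker T)^{\perp}$ and $\ker T^* = T(X)^{\perp}$. Combining these with the standard annihilator/quotient isomorphisms $X^*/Z^{\perp}\cong Z^*$ (for a closed subspace $Z\subseteq X$) and $W^{\perp}\cong (Y/W)^*$ (for a closed subspace $W\subseteq Y$), I obtain the canonical isomorphisms
\begin{equation*}
 \mathrm{Coker}\,T^* \;=\; X^*/T^*(Y^*) \;\cong\; (\ker T)^*,\qquad \ker T^* \;\cong\; (Y/T(X))^* \;=\; (\mathrm{Coker}\,T)^*.
\end{equation*}

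Second, I would invoke the elementary but essential fact that a Banach space $Z$ is reflexive if and only if $Z^*$ is reflexive. Applied to the two isomorphisms above, this yields the equivalences $\ker T$ reflexive $\Leftrightarrow$ $\mathrm{Coker}\,T^*$ reflexive, and $\mathrm{Coker}\,T$ reflexive $\Leftrightarrow$ $\ker T^*$ reflexive.

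Finally, assembling the pieces: assuming $T$ is range closed, $T^*$ is automatically range closed, and the reflexivity of $\ker T$ and $\mathrm{Coker}\,T$ is equivalent to the reflexivity of $\mathrm{Coker}\,T^*$ and $\ker T^*$ respectively, which is precisely the condition for $T^*$ to be generalized Fredholm. The converse is obtained by the same argument applied to $T^*$ together with the isomorphism $Z\cong Z^{**}$ for reflexive $Z$ (or, equivalently, by running the same chain of equivalences in reverse). There is no real obstacle here beyond making the duality identifications careful; the only subtlety worth pointing out in the write-up is that the identification $\mathrm{Coker}\,T^*\cong (\ker T)^*$ truly uses that $T$ has closed range, since otherwise $T^*(Y^*)$ may be a proper dense subspace of $(\ker T)^{\perp}$.
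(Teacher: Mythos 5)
Your argument is correct and complete. Note, however, that the paper does not prove this statement at all: Theorem \ref{T08} is quoted verbatim as background from Yang's paper \cite{KY}, so there is no proof in the present text to compare against. Your route — the closed range theorem giving $T^*(Y^*)=(\ker T)^{\perp}$ and $\ker T^*=T(X)^{\perp}$, the canonical identifications $X^*/(\ker T)^{\perp}\cong(\ker T)^*$ and $T(X)^{\perp}\cong(Y/T(X))^*$, and the fact that reflexivity passes between a Banach space and its dual — is the standard and essentially the only natural proof of this duality statement, and it is the one Yang's original argument rests on. Your closing remark is also well taken: the identification $\mathrm{Coker}\,T^*\cong(\ker T)^*$ genuinely requires the closed range hypothesis, since otherwise $T^*(Y^*)$ need only be dense in $(\ker T)^{\perp}$ and the quotient $X^*/T^*(Y^*)$ would not even be Hausdorff. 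The one small point worth making explicit in a final write-up is that $\mathrm{Coker}\,T=Y/T(X)$ and $\mathrm{Coker}\,T^*=X^*/T^*(Y^*)$ are Banach spaces precisely because the respective ranges are closed, so that "reflexive" is meaningful for them; with that said, nothing is missing.
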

In \cite{DDj2} Djordjevic has considered generalized Weyl operators. The class of these generalized Weyl operators acting from a Hilbert space $H$ into a Hilbert space $K$ and denoted by $\Phi_{0}^{g}(H,K)$, is defined as:    $\Phi_{0}^{g}(H,K)=\lbrace T \in L(H.K): \mathcal{R}(T)  $ is closed and $\dim \mathcal{N}(T)=\dim \mathcal{N}(T^{*})  \rbrace ,$ where $L(H,K)$ denotes the set of all bounded operators from $H$ into $K.$  If $T \in \Phi_{0}^{g}(H,K),$ then $\mathcal{N}(T)  $ and $\mathcal{N}(T^{*}),$ may be mutually isomorphic infinite-dimensional Hilbert spaces.\\
Then he proves the following theorem.
\begin{theorem} \label{T04}  
	\cite[Theorem 1]{DDj2}
	Let $H,$ $K$ and $M$ be arbitrary Hilbert spaces, $T \in \Phi_{0}^{g}(H,K), S\in \Phi_{0}^{g}(H,M)  $ and $ \mathcal{R}(ST)  $ is closed. Then $ST \in \Phi_{0}^{g}(H,M).$
\end{theorem}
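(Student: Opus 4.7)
My plan is to compute $\dim\mathcal{N}(ST)$ and $\dim\mathcal{N}((ST)^{*})$ in terms of three cardinals that will turn out to be the same on both sides. First, I would observe that $T$ restricts to an isomorphism $\mathcal{R}(T^{*})\to\mathcal{R}(T)$, so that $\mathcal{N}(ST)=T^{-1}(\mathcal{N}(S)\cap\mathcal{R}(T))$ splits orthogonally as $\mathcal{N}(T)\oplus\bigl(T|_{\mathcal{R}(T^{*})}\bigr)^{-1}\bigl(\mathcal{N}(S)\cap\mathcal{R}(T)\bigr)$, giving
\[\dim\mathcal{N}(ST)=\dim\mathcal{N}(T)+\dim\bigl(\mathcal{N}(S)\cap\mathcal{R}(T)\bigr).\]
Since $\mathcal{R}(ST)$ is closed, so is $\mathcal{R}((ST)^{*})=\mathcal{R}(T^{*}S^{*})$; applying the same decomposition to $(ST)^{*}=T^{*}S^{*}$ yields
\[\dim\mathcal{N}((ST)^{*})=\dim\mathcal{N}(S^{*})+\dim\bigl(\mathcal{N}(T^{*})\cap\mathcal{R}(S^{*})\bigr).\]

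Next, I would exploit the identity $S^{-1}(\mathcal{R}(ST))=\mathcal{R}(T)+\mathcal{N}(S)$, which together with continuity of $S$ and closedness of $\mathcal{R}(ST)$ shows that $\mathcal{R}(T)+\mathcal{N}(S)$ is closed in $K$. Writing $K=\mathcal{R}(T)\oplus\mathcal{N}(T^{*})$, a short verification gives $P_{\mathcal{N}(T^{*})}(\mathcal{N}(S))=(\mathcal{R}(T)+\mathcal{N}(S))\cap\mathcal{N}(T^{*})$, so this projected image is closed in $\mathcal{N}(T^{*})$. A standard perp computation using $(A+B)^{\perp}=A^{\perp}\cap B^{\perp}$ then identifies the orthogonal complement of $P_{\mathcal{N}(T^{*})}(\mathcal{N}(S))$ inside $\mathcal{N}(T^{*})$ as $\mathcal{N}(T^{*})\cap\mathcal{R}(S^{*})$, producing the orthogonal splitting
\[\mathcal{N}(T^{*})=P_{\mathcal{N}(T^{*})}(\mathcal{N}(S))\;\oplus\;\bigl(\mathcal{N}(T^{*})\cap\mathcal{R}(S^{*})\bigr).\]
Symmetrically, one obtains $\mathcal{N}(S)=P_{\mathcal{N}(S)}(\mathcal{N}(T^{*}))\oplus\bigl(\mathcal{N}(S)\cap\mathcal{R}(T)\bigr)$.

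The bridge between the two sides comes from the adjoint relation $\bigl(P_{\mathcal{N}(T^{*})}|_{\mathcal{N}(S)}\bigr)^{*}=P_{\mathcal{N}(S)}|_{\mathcal{N}(T^{*})}$ and the fact that an operator between Hilbert spaces whose range is closed has the same range dimension as its adjoint. Setting $\alpha:=\dim P_{\mathcal{N}(T^{*})}(\mathcal{N}(S))=\dim P_{\mathcal{N}(S)}(\mathcal{N}(T^{*}))$, $\beta:=\dim(\mathcal{N}(T^{*})\cap\mathcal{R}(S^{*}))$, $\gamma:=\dim(\mathcal{N}(S)\cap\mathcal{R}(T))$, the two splittings above read $\dim\mathcal{N}(T^{*})=\alpha+\beta$ and $\dim\mathcal{N}(S)=\alpha+\gamma$. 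Combining with the hypotheses $\dim\mathcal{N}(T)=\dim\mathcal{N}(T^{*})$ and $\dim\mathcal{N}(S)=\dim\mathcal{N}(S^{*})$ substituted into the two formulas from the first paragraph gives
\[\dim\mathcal{N}(ST)=(\alpha+\beta)+\gamma=(\alpha+\gamma)+\beta=\dim\mathcal{N}((ST)^{*}),\]
which is exactly what is required.

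I expect the main obstacle to be the middle step: justifying that $\mathcal{R}(T)+\mathcal{N}(S)$ is closed and that its orthogonal complement inside $\mathcal{N}(T^{*})$ is precisely $\mathcal{N}(T^{*})\cap\mathcal{R}(S^{*})$. Once that orthogonal decomposition of $\mathcal{N}(T^{*})$ (and dually of $\mathcal{N}(S)$) is in place, the closed-range equality of dimensions $\dim\mathcal{R}(P)=\dim\mathcal{R}(P^{*})$ for $P=P_{\mathcal{N}(T^{*})}|_{\mathcal{N}(S)}$ does all the remaining work, and the argument goes through uniformly regardless of whether the relevant kernels are finite- or infinite-dimensional.
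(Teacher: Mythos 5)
Your proof is correct, but note first that the paper does not actually reprove this quoted theorem of Djordjevi\'c: it only records that the original proof in \cite{DDj2} rests on Kato's theorem, and instead proves the Hilbert $C^*$-module generalization (Proposition \ref{P01}), whose proof specializes to a proof of the present statement (with $F=T$, $D=S$). Your route and that proof share the same skeleton --- the two kernel formulas $\dim\mathcal{N}(ST)=\dim\mathcal{N}(T)+\dim(\mathcal{N}(S)\cap\mathcal{R}(T))$ and its adjoint counterpart --- but diverge at the bridge between them. The paper writes $\mathcal{R}(S)=\mathcal{R}(ST)\oplus X$ and splits $\mathcal{N}(S)$ as $(\mathcal{N}(S)\cap\mathcal{R}(T))\oplus M$, then uses the inverse of $S$ on $\mathcal{N}(S)^{\perp}$ to build a non-orthogonal decomposition of $K$ from which it reads off $\mathcal{R}(T)^{\perp}\cong X\oplus M$, and finally chains isomorphisms through $\mathcal{N}(T)\cong\mathcal{R}(T)^{\perp}$ and $\mathcal{N}(S)\cong\mathcal{R}(S)^{\perp}$. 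You instead play $\mathcal{N}(T^{*})$ and $\mathcal{N}(S)$ off against each other via the restricted projection $P_{\mathcal{N}(T^{*})}|_{\mathcal{N}(S)}$ and its adjoint, using closedness of $\mathcal{R}(T)+\mathcal{N}(S)=S^{-1}(\mathcal{R}(ST))$ to get closed ranges and then $\dim\mathcal{R}(P)=\dim\mathcal{R}(P^{*})$. Both avoid Kato's theorem. Your version is arguably cleaner at the Hilbert-space level, since everything is orthogonal and reduces to cardinal bookkeeping; the paper's version is the one that survives the passage to Hilbert $C^*$-modules, where one must exhibit actual isomorphisms of submodules rather than compare dimensions. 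One point to make explicit: the ``symmetric'' splitting $\mathcal{N}(S)=P_{\mathcal{N}(S)}(\mathcal{N}(T^{*}))\oplus(\mathcal{N}(S)\cap\mathcal{R}(T))$ is not obtained by literally repeating the first argument (that would require closedness of $\mathcal{N}(T^{*})+\mathcal{R}(S^{*})$); rather, closedness of $P_{\mathcal{N}(S)}(\mathcal{N}(T^{*}))$ should be deduced from the closed range of $P_{\mathcal{N}(T^{*})}|_{\mathcal{N}(S)}$ via the adjoint relation you state in the next paragraph, so the gap is only expository, not mathematical.
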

In the proof of this theorem he applies well known Kato theorem.\\
\\
Finally, in \cite{BS}  and \cite{BM}. Berkani has defined $B$-Fredholm and semi-$B$-Fredholm operators in the following way:\\
Let $ T \in L(X)$ where $X$ is a Banach space. Then $T$ is said to be semi-$B$-Fredholm if there exists an $ n $ such that $Im T^{n}$ is closed and $T_{\mid_{Im T^{n}}}  $ is a semi-Fredholm operator viewed as an operator from $Im T^{n}  $ into $Im T^{n}.$ If $T_{\mid_{Im T^{n}}}  $ is Fredholm, then $T$ is said to be $B$-Fredholm.\\
He proves for instance the following statements regarding these new classes of operators:
\begin{proposition} \label{P02}  
	\cite[Proposition 2.1]{BS} Let $T \in L(X).$ If there exists an integer $n \in \mathbb{N}  $ such that $R(T^{n}))$ is closed and such that the operator $T_{n}$ is an upper semi-Fredholm (resp. a lower semi-Fredholm) operator, then $R(T^{m}))$ is closed, $T_{m}  $ is an upper semi-Fredholm (resp.a lower semi-Fredholm) operator, for each $m \geq n  .$ Moreover, if $T_{n} $ is a Fredholm operator, then $ T_{m} $ is a Fredholm operator and $ind(T_{m})=ind (T_{n})$ for each $m \geq n  .$
\end{proposition}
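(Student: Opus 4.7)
The plan is to reduce the full claim to a single inductive step: assuming $R(T^n)$ is closed and $T_n$ is upper (resp.\ lower) semi-Fredholm on $R(T^n)$, I will show the same for $n+1$, and then iterate. Thus the core of the argument is to verify that $R(T^{n+1})$ is closed, that $T_{n+1}$ is semi-Fredholm of the same type on $R(T^{n+1})$, and, in the Fredholm case, that $\mathrm{ind}(T_{n+1}) = \mathrm{ind}(T_n)$.

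For closedness of $R(T^{n+1})$, I would use the identity $R(T^{n+1}) = T_n\bigl(R(T^n)\bigr)$, which exhibits $R(T^{n+1})$ as the range of the semi-Fredholm operator $T_n$ and is therefore closed. To obtain that $R(T^{n+2})$ is closed inside $R(T^{n+1})$, I would invoke the stability of upper (resp.\ lower) semi-Fredholm operators under composition applied to $T_n^{2}: R(T^n) \to R(T^n)$, whose image is exactly $R(T^{n+2})$; closedness in $R(T^n)$ immediately gives closedness in the subspace $R(T^{n+1})$.

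For the semi-Fredholm property of $T_{n+1}$, the upper case is immediate from $\ker T_{n+1} = \ker T \cap R(T^{n+1}) \subseteq \ker T_n$, which has finite dimension by hypothesis. The lower case requires a short quotient argument: since $T_n$ and $T_n^2$ are both lower semi-Fredholm, the spaces $R(T^n)/R(T^{n+1})$ and $R(T^n)/R(T^{n+2})$ are both finite-dimensional, and therefore so is the subquotient $R(T^{n+1})/R(T^{n+2}) = \mathrm{coker}\, T_{n+1}$. For the index statement in the Fredholm case, I would exploit the factorization $T_n \circ J = J \circ T_{n+1}$, where $J : R(T^{n+1}) \hookrightarrow R(T^n)$ is the inclusion. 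The operator $J$ is itself Fredholm with $\mathrm{ind}(J) = -\dim R(T^n)/R(T^{n+1})$, so the logarithmic law for the Fredholm index applied to both sides cancels $\mathrm{ind}(J)$ and yields $\mathrm{ind}(T_n) = \mathrm{ind}(T_{n+1})$.

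The main obstacle I anticipate is the bookkeeping needed to transfer closedness and finite-codimensionality from the ambient space $R(T^n)$ down to the subspace $R(T^{n+1})$; the individual arguments are short, but one has to cite the composition-stability theorems for upper and lower semi-Fredholm operators in the correct direction and verify that $T$ really does map $R(T^{n+1})$ into itself, so that $T_{n+1}$ is a well-defined operator on this subspace to which the semi-Fredholm theory applies.
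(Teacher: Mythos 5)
Your argument is correct, but it is worth noting that the paper never proves this statement itself --- it is quoted from Berkani--Sarih, and the closest in-house analogue is Proposition \ref{P04}, the Hilbert $C^{*}$-module generalization --- and your route differs from that proof in two substantive ways. For the lower semi-Fredholm case you pass to quotients: $\mathrm{coker}\, T_{n+1}=R(T^{n+1})/R(T^{n+2})$ embeds in $R(T^{n})/R(T^{n+2})=\mathrm{coker}\,(T_n^2)$, which is finite-dimensional by composition stability of $\Phi_-$; the paper instead avoids quotients entirely and shows, reusing the decomposition $R\cong S(X)\,\tilde{\oplus}\,M$ extracted from the proof of Proposition \ref{P01}, that the complement $X$ of $R(T^{n+2})$ in $R(T^{n+1})$ is (isomorphic to) a direct summand of the complement $R$ of $R(T^{n+1})$ in $R(T^{n})$. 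This is not a cosmetic difference: in the module setting a closed submodule of a finitely generated module need not be finitely generated, so your subquotient count would not survive the generalization, whereas the direct-summand argument does. For the index you use the intertwining $T_nJ=JT_{n+1}$ with the inclusion $J:R(T^{n+1})\hookrightarrow R(T^{n})$ and the logarithmic law, which is clean and fully rigorous over Banach spaces; the paper instead computes the $K(\mathcal{A})$-classes directly via $R'\cong X'\oplus M'$ and $\ker F\cap \mathrm{Im}\,F^{n}\cong(\ker F\cap \mathrm{Im}\,F^{n+1})\oplus M'$, since in $H_{\mathcal{A}}$ the index of $J$ and the applicability of the composition law require the extra hypotheses ($\mathrm{Im}\,F^{n}\cong H_{\mathcal{A}}$, adjointability) that the paper arranges explicitly. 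Your version also has to do a little work the paper's does not --- deriving closedness of $R(T^{m})$ from semi-Fredholmness of $T_n$ and $T_n^{2}$ --- because closedness of all powers is built into the hypotheses of Proposition \ref{P04}. In short: your proof is a correct and more elementary argument for the Banach-space statement as quoted, while the paper's machinery is the one that transfers to Hilbert $C^{*}$-modules.
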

\begin{proposition} \label{P03}  
	\cite[Proposition 3.3]{BS} Let $ T \in L(X) $ be a $  -$ Fredholm operator and let $F$ be a finite rank operator. Then $T+F$ is a $B$-Fredholm operator and ind $(T+F)= ind (T).$
\end{proposition}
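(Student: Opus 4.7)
\textbf{Proof plan for Proposition \ref{P03}.}

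The plan is to bootstrap from two classical ingredients: Proposition \ref{P02} above, which lets us replace the witnessing integer $n$ in the definition of a $B$-Fredholm operator by any larger integer while keeping the index unchanged, together with the classical fact that Fredholm operators (in the ordinary sense) are stable under finite rank perturbations with preservation of index. The bridge between the two is a purely algebraic observation about powers of $T+F$.

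First, I would fix an integer $n$ such that $R(T^n)$ is closed and $T_n := T|_{R(T^n)}$ is Fredholm on $R(T^n)$, and then, appealing to Proposition \ref{P02}, note that I may enlarge $n$ at will. Next I would expand $(T+F)^m$ for an arbitrary $m\geq n$ as a sum over all binary words of length $m$ in the letters $T$ and $F$. Exactly one such word equals $T^m$; every other word contains at least one occurrence of $F$, hence has finite rank since $F$ does. Combining the finitely many terms yields the key identity $(T+F)^m = T^m + K_m$ with $K_m$ a finite rank operator on $X$.

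The next step is to invoke the Kato-type structural decomposition available for $B$-Fredholm operators, namely a topological direct sum $X = M \oplus N$ into closed $T$-invariant subspaces such that $T|_M$ is Fredholm on $M$ and $T|_N$ is nilpotent (with $T^n|_N = 0$ for our chosen $n$). Relative to this splitting, $T^m$ acts as $(T|_M)^m \oplus 0$, so $R(T^m) = R((T|_M)^m) \subseteq M$ has finite codimension in $M$ and $(T|_M)^m$ is an ordinary Fredholm operator of index $m\cdot \mathrm{ind}(T|_M)$. Adding the finite rank operator $K_m$ to $T^m$, the classical Fredholm perturbation theorem (applied to $(T|_M)^m$ after projecting $K_m$ and absorbing the finite-dimensional contributions from $N$) shows that $(T+F)^m$ again has closed range and that $(T+F)|_{R((T+F)^m)}$ is Fredholm with the same index as $T|_{R(T^m)}$. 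Equality of the two indices is then exactly the conclusion $\mathrm{ind}(T+F) = \mathrm{ind}(T)$ in the $B$-Fredholm sense.

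The main obstacle I anticipate is the last step: the subspaces $R(T^m)$ and $R((T+F)^m)$ are not equal, only close in a finite-dimensional sense, and the perturbation $F$ does not respect the Kato decomposition $X = M \oplus N$. Making the transfer of Fredholmness and index from $T|_{R(T^m)}$ to $(T+F)|_{R((T+F)^m)}$ rigorous requires careful bookkeeping of the off-diagonal blocks of $F$ in the decomposition and a use of the fact that, for $m$ large, the nilpotent summand contributes only finite-dimensionally, so the asymptotic behavior is governed entirely by the Fredholm part where the ordinary perturbation theorem applies.
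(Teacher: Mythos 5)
You should first note that the paper does not actually prove this proposition: it is quoted from Berkani--Sarih as background, and the paper's own contribution is the Hilbert $C^{*}$-module generalization, Theorem \ref{T03}, whose proof is the natural object of comparison. Your opening move --- expanding $(T+F)^{m}$ into words in $T$ and $F$ to obtain $(T+F)^{m}=T^{m}+K_{m}$ with $K_{m}$ of finite rank --- is exactly the starting point of that proof (there $\tilde{F}=(T+F)^{m}-T^{m}$), so the algebraic bridge is sound.

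The gap is in your last step, and it is not mere bookkeeping. First, $T^{m}$ itself need not be Fredholm on $X$: the nilpotent summand $N$ in the Kato-type decomposition may be infinite dimensional (take $T=0$), so $\ker T^{m}\supseteq N$ and ``the classical Fredholm perturbation theorem applied to $T^{m}+K_{m}$'' has nothing to bite on; the contribution of $N$ that you propose to absorb is in general not finite dimensional. Second, what must be proved is that the single operator $T+F$, restricted to $R((T+F)^{m})$, is Fredholm as an operator of that subspace into itself; this is not a formal consequence of any statement about the power $(T+F)^{m}$, and your sketch passes from one to the other without argument (note also that $\mathrm{ind}((T|_{M})^{m})=m\,\mathrm{ind}(T|_{M})$ whereas $\mathrm{ind}(T|_{R(T^{m})})=\mathrm{ind}(T|_{M})$, so the index comparison as written conflates the two). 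The missing idea --- the one the paper supplies in the proof of Theorem \ref{T03} --- is to manufacture a single subspace of finite codimension inside both ranges: since $\tilde{F}$ has finite rank, $\ker\tilde{F}$ has finite codimension, $T^{m}(\ker\tilde{F})=(T+F)^{m}(\ker\tilde{F})$ lies in $R(T^{m})\cap R((T+F)^{m})$, and one obtains $R(T^{m})=T^{m}(\ker\tilde{F})\oplus N$ and $R((T+F)^{m})=T^{m}(\ker\tilde{F})\oplus N'$ with $N,N'$ finite dimensional. One then compares $T|_{R(T^{m})}$ and $(T+F)|_{R((T+F)^{m})}$ with their compressions to this common subspace, using the fact (Lemma \ref{L06} in the module setting, elementary classically) that compressing by a projection with finite-dimensional kernel preserves Fredholmness and index, and finally absorbs $F$ on the common subspace by the ordinary perturbation theorem. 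Until some version of this comparison is carried out, your proposal establishes only the easy identity $(T+F)^{m}=T^{m}+K_{m}$ and restates, rather than resolves, the actual content of the proposition.
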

Now, Hilbert $C^{*}$-modules are natural generalization of Hilbert spaces when the field of scalars is replaced by a $C^{*}$-algebra. 

Fredholm theory on Hilbert $C^*$-modules as a generalization of Fredholm theory on Hilbert spaces was started by Mishchenko and Fomenko in \cite{MF}. They have elaborated the notion of a Fredholm operator on the standard module $H_{\mathcal{A}} $ and proved the generalization of the Atkinson theorem. Their definition of $\mathcal{A}$-Fredholm operator on $H_{\mathcal{A}}$ is the following:

\cite[Definition ]{MF} A (bounded $\mathcal{A}$ linear) operator $F: H_{\mathcal{A}} \rightarrow H_{\mathcal{A}}$ is called $\mathcal{A}$-Fredholm if\\
1) it is adjointable;\\
2) there exists a decomposition of the domain $H_{\mathcal{A}}= M_{1} \tilde{\oplus} N_{1} ,$ and the range, $H_{\mathcal{A}}= M_{2} \tilde{\oplus} N_{2}$, where $M_{1},M_{2},N_{1},N_{2}$ are closed $\mathcal{A}$-modules and $N_{1},N_{2}$ have a finite number of generators, such that $F$ has the matrix from 
\begin{center}
	$\left\lbrack
	\begin{array}{ll}
	F_{1} & 0 \\
	0 & F_{4} \\
	\end{array}
	\right \rbrack
	$
\end{center}
with respect to these decompositions and $F_{1}:M_{1}\rightarrow M_{2}$ is an isomorphism.\\
The notation $\tilde{ \oplus} $ denotes the direct sum of modules without orthogonality, as given in \cite{MT}.\\
\\
In \cite{I} we vent further in this direction and defined semi-$\mathcal{A}$-Fredholm operators on Hilbert $C^{*}$-modules. We investigated then and proved several properties of these new semi Fredholm operators on Hilbert $C^{*}$-modules as an analogue or generalization of the well-known properties of classical semi-Fredholm operators on Hilbert and Banach spaces.\\
The main idea with this paper was to go further in the direction of \cite{I},  \cite{MF} and to define generalized $\mathcal{A}$-Fredholm operators, generalized $\mathcal{A}$-Weyl operators and semi-$\mathcal{A}$-$B$-Fredholm operators on $ H_{\mathcal{A}} $ that would  be appropriate generalizations of the above mentioned classes of operators on Hilbert and Banach spaces defined by Yang, Djordjevic and Berkani. Moreover the purpose of this paper is to establish in this setting an analogue or a generalization of the above mentioned results concerning generalized Fredholm, generalized Weyl and semi-$B$-Fredholm operators on a Hilbert or a Banach space. More precisely, our Proposition \ref{P01} is an analogue of \cite[Theorem 1]{DDj2}, our Lemma 3.5 is an analogue of \cite[Theorem 5.3]{KY}, our Proposition 3.6 is an analogue of \cite[Theorem 5.4]{KY}, our Lemma 3.7 is analogue of \cite[Theorem 5.5]{KY}, our Proposition 5.2 is a generalization of \cite[Proposition 2.1]{BM} and our Theorem \ref{T03} is a generalization of \cite[Proposition 3.3]{BM}. \\

Next, in addition to adjointable $\mathcal{A}$-Fredholm operator, Mishchenko also considers in \cite{IM}  non adjointable $\mathcal{A}$-Fredholm operators on the standard module $l_{2}(\mathcal{A})  .$ In this paper, we go further in this direction and consider non adjointable semi-$\mathcal{A}$-Fredholm operators on $l_{2}(\mathcal{A})  .$ We establish some of the basic properties of these operators in terns of inner and external (Noether) decompositions and show that these operators are exactly those that are one sided invertible in $ B (l_{2}(\mathcal{A})) / K(l_{2}(\mathcal{A})),$ where $K(l_{2}(\mathcal{A}))  $ denotes the set of all compact operators on $ l_{2}(\mathcal{A}) $ in the sense of \cite{IM}. Then we prove that an analogue or a modified version of results in \cite{I}, \cite{S} hold when one considers these non adjointable semi-$\mathcal{A}$-Fredholm operators.\\

\section{Preliminaries }
In this section we are going to introduce the notation, and the definitions in \cite{I} that are needed in this paper.	Throughout this paper we let $\mathcal{A}$ be a unital $\mathrm{C}^{*}$-algebra, $H_{\mathcal{A}}$ be the standard module over $\mathcal{A}$ and we let $B^{a}(H_{\mathcal{A}})$ denote the set of all bounded , adjointable operators on $H_{\mathcal{A}}.$ We also let $ B(l_{2}(\mathcal{A}))$  denote the set of all $\mathcal{A}$-linear, bounded operators on the standard module $l_{2}(\mathcal{A}),$ but not necessarily adjointable. According to \cite[ Definition 1.4.1] {MT}, we say that a Hilbert $\mathrm{C}^*$-module $M$ over $\mathcal{A}$ is finitely generated if there exists a finite set $ \lbrace x_{i} \rbrace \subseteq M $  such that $M $ equals the linear span (over $\mathrm{C}$ and $\mathcal{A} $) of this set.\\
\begin{definition}\label{D05}  
	\cite[Definition 2.1]{I} Let $\mathrm{F} \in B^{a}(H_{\mathcal{A}}).$ 
	We say that $\mathrm{F} $ is an upper semi-{$\mathcal{A}$}-Fredholm operator if there exists a decomposition 
	$$H_{\mathcal{A}} = M_{1} \tilde \oplus N_{1} \stackrel{\mathrm{F}}{\longrightarrow}   M_{2} \tilde \oplus N_{2}= H_{\mathcal{A}}$$
	with respect to which $\mathrm{F}$ has the matrix\\
	\begin{center}
		$\left\lbrack
		\begin{array}{ll}
		\mathrm{F}_{1} & 0 \\
		0 & \mathrm{F}_{4} \\
		\end{array}
		\right \rbrack,
		$
	\end{center}
	where $\mathrm{F}_{1}$ is an isomorphism $M_{1},M_{2},N_{1},N_{2}$ are closed submodules of $H_{\mathcal{A}} $ and $N_{1}$ is finitely generated. Similarly, we say that $\mathrm{F}$ is a lower semi-{$\mathcal{A}$}-Fredholm operator if all the above conditions hold except that in this case we assume that $N_{2}$ ( and not $N_{1}$ ) is finitely generated.	
\end{definition}
Set
\begin{center}
	$\mathcal{M}\Phi_{+}(H_{\mathcal{A}})=\lbrace \mathrm{F} \in B^{a}(H_{\mathcal{A}}) \mid \mathrm{F} $ is upper semi-{$\mathcal{A}$}-Fredholm $\rbrace ,$	
\end{center}
\begin{center}
	$\mathcal{M}\Phi_{-}(H_{\mathcal{A}})=\lbrace \mathrm{F} \in B^{a}(H_{\mathcal{A}}) \mid \mathrm{F} $ is lower semi-{$\mathcal{A}$}-Fredholm $\rbrace ,$	
\end{center}
$\mathcal{M}\Phi(H_{\mathcal{A}})=\lbrace \mathrm{F} \in B^{a}(H_{\mathcal{A}}) \mid \mathrm{F} $ is $\mathcal{A}$-Fredholm operator on $H_{\mathcal{A}}\rbrace .$
\begin{remark}\cite{I}
	Notice that if $M,N$ are two arbitrary Hilbert modules $\mathrm{C}^{*}$-modules, the definition above could be generalized to the classes $\mathcal{M}\Phi_{+}(M,N)$ and $\mathcal{M}\Phi_{-}(M,N)$.\\
	Recall that by \cite[ Definition 2.7.8]{MT}, originally given in \cite{MF}, when $\mathrm{F} \in \mathcal{M}\Phi(H_{\mathcal{A}})     $ and 
	$$ H_{\mathcal{A}} = M_{1} \tilde \oplus {N_{1}}\stackrel{\mathrm{F}}{\longrightarrow} M_{2} \tilde \oplus N_{2}= H_{\mathcal{A}} $$
	is an $ \mathcal{M}\Phi    $ decomposition for $  \mathrm{F}   $, then the index of $\mathrm{F}$ is definited by index $ \mathrm{F}=[N_{1}]-[N_{2}] \in K(\mathcal{A})    $ where $[N_{1}]    $ and $ [N_{2}] $ denote  the isomorphism classes of $ N_{1}    $ and $ N_{2} $ respectively. „By \cite[ Definition 2.7.9]{MT}, the index is well defined and does not depend on the choice of $\mathcal{M}\Phi$ decomposition for $\mathrm{F}.$
\end{remark}
\begin{definition} \label{D06}  
	\cite[Definition 5.6]{I} Let $F \in \mathcal{M}\Phi_{+} (H_{\mathcal{A}}).$ We say that $ F \in {{\mathcal{M}\Phi}_{+}^{-}}^{\prime} (H_{\mathcal{A}})$ if there exists a decomposition $$H_{\mathcal{A}} = M_{1} \tilde \oplus {N_{1}}‎‎\stackrel{F}{\longrightarrow} M_{2} \tilde \oplus N_{2}= H_{\mathcal{A}} $$
	with respect to which
	\begin{center}
		$F=\left\lbrack
		\begin{array}{ll}
		F_{1} & 0 \\
		0 & F_{4} \\
		\end{array}
		\right \rbrack,
		$
	\end{center}
	where $F_{1}$ is an isomorphism, $N_{1}$ is closed, finitely generated and $N_{1} \preceq N_{2} .$ Similarly, we define the class ${\mathcal{M}\Phi_{-}^{+}}^{\prime} (H_{\mathcal{A}})$, only in this case $F \in \mathcal{M}\Phi_{-} (H_{\mathcal{A}})$, $N_{2}$ is finitely generated and $N_{2} \preceq N_{1} .$
\end{definition}
In \cite{S} we set $ \widehat{\mathcal{M} \Phi}_{+}^{-} (H_{\mathcal{A}})$  to be the space of all $\mathrm{F} \in B^{a}(H_{\mathcal{A}}) $ such that there exists a decomposition
$$H_{\mathcal{A}} = M_{1} \tilde \oplus {N_{1}}\stackrel{\mathrm{F}}{\longrightarrow}  M_{2} \tilde \oplus N_{2}= H_{\mathcal{A}},$$  
w.r.t. which $\mathrm{F}$ has the matrix 
$\left\lbrack
\begin{array}{ll}
\mathrm{F}_{1} & 0 \\
0 & \mathrm{F}_{4} \\
\end{array}
\right \rbrack
,$ 
where $\mathrm{F}_{1}$ is an isomorphism,  $ N_{1}  $  is finitely generated and such that there exist closed submodules  $ N_{2}^{\prime},N  $  where  $N_{2}^{\prime} \subseteq N_{2},N_{2}^{\prime} \cong N_{1} ,$ 
$H_{\mathcal{A}}= N \tilde{\oplus} N_{1} = N \tilde{\oplus} N_{2}^{\prime} \textrm { and  the projection onto } N \textrm{ along } N_{2}^{\prime}$ is adjointable.\\
\begin{definition} \label{D07}  
	\cite[Definition 4]{S} We set $\widehat{\mathcal{M} \Phi}_{-}^{+}(H_{\mathcal{A}})$ to be the set of all $ \mathrm{D} \in B^{a}  (H_{\mathcal{A}}) $ such that there exists a decomposition
	$$H_{\mathcal{A}} = M_{1}^{\prime} \tilde \oplus {N_{1}^{\prime}}\stackrel{\mathrm{D}}{\longrightarrow} M_{2}^{\prime} \tilde \oplus N_{2}^{\prime}= H_{\mathcal{A}} $$ 
	w.r.t. which $\mathrm{D}$ has the matrix 
	$\left\lbrack
	\begin{array}{ll}
	\mathrm{D}_{1} & 0 \\
	0 & \mathrm{D}_{4}  \\
	\end{array}
	\right \rbrack
	,$ where $\mathrm{D}_{1}$ is an isomorphism, $N_{2}^{\prime}$ is finitely generated and such that $H_{\mathcal{A}}=M_{1}^{\prime} \tilde \oplus N \tilde \oplus N_{2}^{\prime}$ for some closed submodule $N,$ where the projection onto $M_{1}^{\prime} \tilde \oplus N $ along $N_{2}^{\prime}$ is adjointable.
\end{definition}
\begin{definition}  \label{D08}  
	\cite[Definition 2]{IM} A bounded $\mathcal{A}$-operator $l_{2}(\mathcal{A})  \longrightarrow l_{2}(\mathcal{A}) $ is called a Fredholm $\mathcal{A}$-operator if there exists a bounded $\mathcal{A}$-operator $  $ such that\\
	$$\mathrm{\textbf{id}} -FG \in \mathcal{K}(l_{2}(\mathcal{A})), \mathrm{\textbf{id}} -GF \in \mathcal{K}(l_{2}(\mathcal{A})).$$
\end{definition}
\begin{definition} \label{D09}  
	\cite[Definition 3]{IM} We say that a bounded $\mathcal{A}$-operator $F:l_{2}^{\prime}(\mathcal{A}) \longrightarrow  l_{2}^{\prime \prime}(\mathcal{A})$ admits an inner (Noether) decomposition if there is a decomposition of the preimage and the image
	$ l_{2}^{\prime}(\mathcal{A})=M_{1} \oplus N_{1}, l_{2}^{\prime \prime}(\mathcal{A})=M_{2} \oplus N_{2} $
	where $C^{*}$-modules $ N_{1}  $ and $N_{2}  $ are finitely generated Hilbert $C^{*}$-modules, and if $F$ has the following matrix from
	$F=\left\lbrack
	\begin{array}{ll}
	F_{1} & F_{2} \\
	0 & F_{4} \\
	\end{array}
	\right \rbrack
	:$ 
	$ M_{1} \oplus N_{1}  \longrightarrow M_{2} \oplus N_{2}, $
	where $F_{1}: M_{1} \longrightarrow M_{2}$ is an isomorphism.
\end{definition}
\begin{definition}  \label{D10}  
	\cite[Definition 4]{IM} We put by definition $ index F=[N_{2}]-[N_{1}] \in K(\mathcal{A}).$
\end{definition}
\begin{definition} \label{D11}  
	\cite[Definition 5]{IM} We say that a bounded $\mathcal{A}$-operator $F:l_{2}^{\prime}(\mathcal{A}) \longrightarrow  l_{2}^{\prime \prime}(\mathcal{A})  $ admits an external (Noether) decomposition if there exist finitely generated $C^{*}$-modules $X_{1}  $ and $X_{2} $ bounded $\mathcal{A}$-operators $E_{2},E_{3}  $ such that the matrix operator\\
	$F_{0}=\left\lbrack
		\begin{array}{ll}
		F & E_{2} \\
		E_{3} & 0 \\
	\end{array}
	\right \rbrack
	:$ 
	$ l_{2}^{\prime}(\mathcal{A}) \oplus X_{1}  \longrightarrow l_{2}^{\prime \prime}(\mathcal{A}) \oplus X_{2}, $
	Is an invertible operator.
\end{definition}
\begin{definition} \label{D12}  
	\cite[Definition 6]{IM} We put by definition $index F=[X_{1}]-[X_{2}]  \in K(\mathcal{A}) .$
\end{definition}

\section{On generalized $\mathcal{A}$-Fredholm and $\mathcal{A}$-Weyl operators  }

%
\begin{definition} \label{D01}  
	Let $F \in B^{a} (H_{\mathcal{A}}) $\\
	1) We say that $F \in  {\mathcal{M}\Phi}^{gc}(H_{\mathcal{A}})  $  if $ImF$ is closed, $ker F$ and $ Im F^{\perp}$ are self-dual.\\
	2)	We say that $F \in  {\mathcal{M}\Phi}_{0}^{gc}(H_{\mathcal{A}})   $ is $Im F$ is closed and $ker F \cong Im F^{\perp}$ (here we do \underline{not} require self-duality of $ker F,$ $Im F^{\perp}) .$
\end{definition}
\begin{proposition}  \label{P01}  
	Let $F,D \in  {\mathcal{M}\Phi}_{0}^{gc}(H_{\mathcal{A}}) $ and suppose that $ImDF$ is closed. Then $DF \in  {\mathcal{M}\Phi}_{0}^{gc}(H_{\mathcal{A}}) .$
\end{proposition}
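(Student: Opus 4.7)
The plan is to decompose $\ker DF$ and $\operatorname{Im}(DF)^{\perp}$ in Kato-style form and then match the resulting summands using the two hypotheses together with a parallel-projection argument.

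Because $F$ is adjointable with closed range, $H_{\mathcal A}=\ker F\,\tilde{\oplus}\,\operatorname{Im} F^{*}$ and $F|_{\operatorname{Im} F^{*}}\colon\operatorname{Im} F^{*}\to\operatorname{Im} F$ is an isomorphism of Hilbert $C^{*}$-modules, so pulling $\ker D$ back through $F$ yields
\[
\ker DF\;=\;\ker F\,\tilde{\oplus}\,(F|_{\operatorname{Im} F^{*}})^{-1}(\ker D\cap\operatorname{Im} F)\;\cong\;\ker F\oplus(\ker D\cap\operatorname{Im} F).
\]
Since $\operatorname{Im} DF$ is closed, $(DF)^{*}=F^{*}D^{*}$ also has closed range, so the same argument applied with $D^{*},F^{*}$ in place of $F,D$ gives
\[
\operatorname{Im}(DF)^{\perp}\;=\;\ker F^{*}D^{*}\;\cong\;\ker D^{*}\oplus(\ker F^{*}\cap\operatorname{Im} D^{*})\;=\;\operatorname{Im} D^{\perp}\oplus(\operatorname{Im} F^{\perp}\cap\ker D^{\perp}).
\]
Invoking the hypotheses $\ker F\cong\operatorname{Im} F^{\perp}$ and $\ker D\cong\operatorname{Im} D^{\perp}$, the proposition reduces to the single isomorphism
\[
\operatorname{Im} F^{\perp}\oplus(\ker D\cap\operatorname{Im} F)\;\cong\;\ker D\oplus(\operatorname{Im} F^{\perp}\cap\ker D^{\perp}).
\]

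For this last isomorphism I would use that $\operatorname{Im} F+\ker D=D^{-1}(\operatorname{Im} DF)$ is closed and study the adjointable parallel projection $S:=P_{\operatorname{Im} F^{\perp}}|_{\ker D}\colon\ker D\to\operatorname{Im} F^{\perp}$. One checks directly that $\ker S=\ker D\cap\operatorname{Im} F$ and that the image of $S$ equals $(\operatorname{Im} F+\ker D)\cap\operatorname{Im} F^{\perp}$, which is closed by the remark above. The closed-range theorem for adjointable operators on Hilbert $C^{*}$-modules then supplies orthogonal decompositions
\[
\ker D=(\ker D\cap\operatorname{Im} F)\,\tilde{\oplus}\,W',\qquad \operatorname{Im} F^{\perp}=(\operatorname{Im} F^{\perp}\cap\ker D^{\perp})\,\tilde{\oplus}\,W,
\]
with $W'\cong W$ via the polar-decomposition isomorphism induced by $S$, after which both sides of the displayed isomorphism become $(\ker D\cap\operatorname{Im} F)\oplus W\oplus(\operatorname{Im} F^{\perp}\cap\ker D^{\perp})$.

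The step I expect to be most delicate is the identification $S(\ker D)=(\operatorname{Im} F+\ker D)\cap\operatorname{Im} F^{\perp}$: the inclusion $\subseteq$ is immediate from the definition of orthogonal projection, while for $\supseteq$ one needs the observation that any $y\in(\operatorname{Im} F+\ker D)\cap\operatorname{Im} F^{\perp}$ can be written $y=b+a$ with $b\in\ker D$, $a\in\operatorname{Im} F$, so that $P_{\operatorname{Im} F^{\perp}}(b)=y-P_{\operatorname{Im} F^{\perp}}(a)=y\in S(\ker D)$. Once this is in hand, the Hilbert $C^{*}$-module closed-range theorem yields the orthogonal complementability of $\ker S$ in $\ker D$ and of $\ker S^{*}$ in $\operatorname{Im} F^{\perp}$, together with the isomorphism $W'\cong W$, and the rest of the matching is a purely formal rearrangement of direct summands.
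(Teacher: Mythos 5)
Your proof is correct, and its second half takes a genuinely different route from the paper's. Both arguments open with the same Kato-type decomposition $\ker DF\cong\ker F\oplus(\ker D\cap \operatorname{Im}F)$, but the paper then works entirely on the range side: it picks a complement $X$ with $\operatorname{Im}D=\operatorname{Im}DF\oplus X$, uses the generalized inverse $S=(D_{\mid_{\ker D^{\perp}}})^{-1}$ to compare the two decompositions $H_{\mathcal A}=W\tilde{\oplus}S(X)\tilde{\oplus}\ker D$ and $H_{\mathcal A}=W\oplus(\ker D\cap\operatorname{Im}F)\oplus\operatorname{Im}F^{\perp}$, extracts $\operatorname{Im}F^{\perp}\cong S(X)\oplus M$, and finally chains isomorphisms down to $\operatorname{Im}(DF)^{\perp}=X\oplus\operatorname{Im}D^{\perp}$. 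You instead dualize: applying the same kernel decomposition to $F^{*}D^{*}$ expresses $\operatorname{Im}(DF)^{\perp}\cong\operatorname{Im}D^{\perp}\oplus(\operatorname{Im}F^{\perp}\cap\ker D^{\perp})$ intrinsically in terms of $F$ and $D$, which reduces the whole proposition to the single symmetric identity $\operatorname{Im}F^{\perp}\oplus(\ker D\cap\operatorname{Im}F)\cong\ker D\oplus(\operatorname{Im}F^{\perp}\cap\ker D^{\perp})$; you then prove that by applying \cite[Theorem 2.3.3]{MT} to the adjointable operator $P_{\operatorname{Im}F^{\perp}}\mid_{\ker D}$, whose range is closed because $\operatorname{Im}F+\ker D=D^{-1}(\operatorname{Im}DF)$ is closed. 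All the individual steps check out: the identification of $\ker S$, $\ker S^{*}$ and $\operatorname{Im}S$ is as you say, and the closed-range theorem does yield the two orthogonal splittings together with $W'\cong W$. What your route buys is a cleaner endgame --- the matching step is exactly the Harte-ghost-type identity that the paper only extracts afterwards (Corollary 3.4), and kernel and cokernel are treated symmetrically. What the paper's route buys is a stock of explicit intermediate submodules ($X$, $S(X)$, $M$, $W$) that it reuses verbatim in Corollary 3.4, Lemma 3.6 and Proposition 5.2, so if you adopted your version you would need to re-derive those decompositions there. Neither proof needs Kato's theorem, which is the point of the remark following the proposition.
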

\begin{proof} 
	Since $Im DF$ is closed, by \cite[Theorem 2.3.3]{MT} there exists a closed submodule $X$ s.t. $Im D = Im DF \oplus X.$ Next, considering the map $D_{\mid_{Im F}}  $ and again using that $Im DF$ is closed, we have that $\ker D \cap Im F= \ker D_{\mid_{Im F}}$ is orthogonally complementable in $Im F$ by \cite[Theorem 2.3.3]{MT}, so $ Im F = W \oplus (\ker D \cap Im F)$ for some closed submodule $W.$ Now, since $\ker D \cap Im F \oplus W \oplus Im F^{\perp}=H_{\mathcal{A}} $ and $(\ker D \cap Im F) \subseteq \ker D,$ it follows that $ \ker D= (\ker D \cap Im F) \oplus (\ker D \cap (W \oplus Im F^{\perp})) .$ Set $M=\ker D \cap (W \oplus Im F^{\perp}) ,$ then $\ker D= (\ker D \cap Im F) \oplus M  .$ On $ \ker D^{\perp} ,D$  is an isomorphism from $ \ker D^{\perp}  $ onto $Im D.$ Let $S=(D_{\mid_{\ker D^{\perp}}})^{-1} .$ Then $P_{{\ker {D^{\perp}}_{\mid_{W}} }} $ is an isomorphism from $W$ onto $S(Im DF).$ Indeed, since $D_{\mid_{W}}  $ is injective and $D(W)=Im DF$ is closed, by Banach open mapping theorem $D_{\mid_{W}}  $ is an isomorphism onto $Im DF.$ This actually means that $DP_{{\ker {D^{\perp}}_{\mid_{W}} }}   $ is an isomorphism onto $Im DF,$ as $D_{\mid_{W}} = DP_{{\ker {D^{\perp}}_{\mid_{W}} }} .$ Since $D_{\mid_{S(Im D)}}$ is an isomorphism onto $Im DF,$ it follows that $P_{{\ker {D^{\perp}}_{\mid_{W}} }}   $ is an isomorphism onto $S(Im DF).$ Hence $\sqcap_{{{S(Im DF)}_{\mid}}_{W}}  $ is an isomorphism onto $S(Im DF),$ where $\sqcap_{{S(Im DF)}}  $ denotes the projection onto $S(Im DF)$ along $S(X).$ Therefore we get that $H_{\mathcal{A}}=W \tilde{\oplus} S(X) \tilde{\oplus} \ker D.$ Thus we have $$H_{\mathcal{A}}=W \tilde{\oplus} S(X) \tilde{\oplus} (\ker D \cap Im F) \tilde{\oplus} M=W \tilde{\oplus} (\ker D \cap Im F) \oplus Im F^{\perp}  .$$ This gives $ S(X) \tilde{\oplus} M \cong Im F^{\perp}  .$ On the other hand, by clasical arguments we have $\ker DF = \ker F \tilde{\oplus} R$ for some closed submodule $R$ isomorphic to $ \ker D \cap Im F.$ Therefore we get $ \ker DF  \cong (\ker F \oplus (\ker D \cap ImF))\cong Im F^{\perp} \oplus (\ker D \cap Im F) \cong S(X) \oplus M \oplus \ker D \cap Im F \cong S(X) \oplus \ker D \cong X \oplus Im D^{\perp} \cong Im DF. $
	(where $ \oplus $ denotes now the direct sum in the sense of \cite[Example 1.3.3]{MT} ).
\end{proof}
\begin{remark}
	This result is a generalization of \cite[Theorem 1]{DDj2}, however in our proof we do not apply Kato theorem. Indeed, our proof is also valid in the case when $F \in  {\mathcal{M}\Phi}_{0}^{gc} (M,N), D \in  {\mathcal{M}}_{0}^{gc} (N,K)$ where $M,N,K$ are arbitrary Hilbert $C^{*}$-modules over a unital $C^{*}$-algebra $\mathcal{A}  .$ Next, by our proof we also obtain easily a generalization of Harte's ghost theorem: 
\end{remark}
\begin{corollary}
	Let $F,D \in B^{a} (H_{\mathcal{A}}) $ and suppose that $Im F, Im D, Im DF$ are closed. Then $\ker F \oplus \ker D \oplus Im DF^{\perp} \cong Im D^{\perp} \oplus Im F^{\perp} \oplus \ker DF. $
\end{corollary}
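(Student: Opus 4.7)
The plan is to recycle, almost verbatim, the constructions from the proof of Proposition \ref{P01}. The punchline is that the $\mathcal{M}\Phi_{0}^{gc}$ hypotheses on $F$ and $D$ are invoked only in the very last isomorphism chain of that proof, to substitute $\ker F \cong \operatorname{Im} F^{\perp}$ and $\ker D \cong \operatorname{Im} D^{\perp}$. If one refrains from performing these two substitutions, the same chain, established under the weaker hypothesis that $\operatorname{Im} F$, $\operatorname{Im} D$, $\operatorname{Im} DF$ are closed, delivers exactly the identity claimed in the corollary.

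Concretely, I would invoke \cite[Theorem 2.3.3]{MT} to extract two orthogonal decompositions. First, taking $X = \operatorname{Im} D \cap \operatorname{Im}(DF)^{\perp}$, one gets $\operatorname{Im} D = \operatorname{Im} DF \oplus X$; since $\operatorname{Im} D^{\perp} \subseteq \operatorname{Im}(DF)^{\perp}$, splitting an arbitrary $y \in \operatorname{Im}(DF)^{\perp}$ along $H_{\mathcal{A}} = \operatorname{Im} D \oplus \operatorname{Im} D^{\perp}$ yields the orthogonal decomposition $\operatorname{Im}(DF)^{\perp} = X \oplus \operatorname{Im} D^{\perp}$. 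Second, $\operatorname{Im} F = W \oplus (\ker D \cap \operatorname{Im} F)$ for some closed $W$, whence $\ker D = (\ker D \cap \operatorname{Im} F) \oplus M$ with $M = \ker D \cap (W \oplus \operatorname{Im} F^{\perp})$. Then I repeat the argument of Proposition \ref{P01} involving $S = (D|_{\ker D^{\perp}})^{-1}$ and the fact that $P_{\ker D^{\perp}}|_{W}$ is an isomorphism onto $S(\operatorname{Im} DF)$ to produce $H_{\mathcal{A}} = W \,\tilde{\oplus}\, S(X) \,\tilde{\oplus}\, \ker D$, and compare with the orthogonal decomposition $H_{\mathcal{A}} = W \oplus (\ker D \cap \operatorname{Im} F) \oplus \operatorname{Im} F^{\perp}$ to conclude $S(X) \oplus M \cong \operatorname{Im} F^{\perp}$, hence $X \oplus M \cong \operatorname{Im} F^{\perp}$ since $S$ is an isomorphism. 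A routine splitting of $\ker DF$ along $H_{\mathcal{A}} = \ker F \oplus \ker F^{\perp}$ (using that $F|_{\ker F^{\perp}}$ is an isomorphism onto $\operatorname{Im} F$) yields $\ker DF \cong \ker F \oplus (\ker D \cap \operatorname{Im} F)$.

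Assembling these ingredients is then pure bookkeeping. Starting from $\ker F \oplus \ker D \oplus \operatorname{Im}(DF)^{\perp}$, I substitute the two decompositions above to rewrite it as $\ker F \oplus (\ker D \cap \operatorname{Im} F) \oplus M \oplus X \oplus \operatorname{Im} D^{\perp}$, then regroup $\ker F \oplus (\ker D \cap \operatorname{Im} F) \cong \ker DF$ and $X \oplus M \cong \operatorname{Im} F^{\perp}$, arriving at $\ker DF \oplus \operatorname{Im} F^{\perp} \oplus \operatorname{Im} D^{\perp}$, as required. The only point requiring a moment's care is the orthogonal splitting $\operatorname{Im}(DF)^{\perp} = X \oplus \operatorname{Im} D^{\perp}$ sketched above; apart from that, no new ideas are needed and there is no real obstacle beyond what is already carried out in the proof of Proposition \ref{P01}.
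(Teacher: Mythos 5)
Your proposal is correct and follows essentially the same route as the paper's own proof: the paper likewise keeps the notation of Proposition \ref{P01}, observes that $\operatorname{Im}(DF)^{\perp}=X\oplus \operatorname{Im}D^{\perp}$, $\ker D=(\ker D\cap \operatorname{Im}F)\oplus M$, $\operatorname{Im}F^{\perp}\cong S(X)\oplus M$ and $\ker DF\cong \ker F\oplus(\ker D\cap \operatorname{Im}F)$, and then performs the same bookkeeping chain. No substantive difference.
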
 
\begin{proof}
	We keep the notation from the previous proof. In that proof we have shown that $Im F^{\perp} \cong S(X) \oplus M.$ Moreover $D=\ker D \cap Im F \oplus M $ and $Im DF^{\perp} =Im D^{\perp} \oplus X.$ This gives 
	
	$$ \ker F \oplus \ker D \oplus Im DF^{\perp} \cong \ker F \oplus \ker D \oplus ImD^{\perp} \oplus X \cong $$
	$$ \ker F \oplus (\ker D \cap Im F) \oplus M \oplus Im D^{\perp} \oplus X \cong \ker DF \oplus M \oplus S(X) \oplus Im D^{\perp} \cong $$
	$$\ker DF \oplus Im F^{\perp} \oplus Im D^{\perp} $$
\end{proof}	
	The next results are inspired by results in \cite{KY}.
\begin{lemma} \label{L01}  
	Let  $F,D \in  {\mathcal{M}\Phi}^{gc}(H_{\mathcal{A}})$ and suppose that $Im DF$ is closed.
Then $DF \in  {\mathcal{M}\Phi}^{gc}(H_{\mathcal{A}}).$ 
\end{lemma}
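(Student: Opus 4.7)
The plan is to reuse essentially the entire decomposition built in the proof of Proposition \ref{P01} and then track \emph{self-duality} through the isomorphisms produced there. Since $\operatorname{Im} DF$ is already closed by hypothesis, it suffices to verify that $\ker DF$ and $\operatorname{Im} DF^{\perp}$ are self-dual.

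First, I would carry over from the proof of Proposition \ref{P01} the submodules $X$, $W$, $S$, and $M = \ker D \cap (W \tilde\oplus \operatorname{Im} F^{\perp})$, together with the resulting direct sum decompositions
\[
\operatorname{Im} F^{\perp} \cong S(X) \tilde\oplus M \cong X \tilde\oplus M, \qquad \ker D = (\ker D \cap \operatorname{Im} F) \tilde\oplus M,
\]
\[
\ker DF \cong \ker F \tilde\oplus (\ker D \cap \operatorname{Im} F), \qquad \operatorname{Im} DF^{\perp} = \operatorname{Im} D^{\perp} \oplus X.
\]
Nothing new is needed here; these are exactly the submodules and isomorphisms established in the course of proving Proposition \ref{P01}.

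Second, I would invoke the fact that self-duality passes to topological direct summands of a Hilbert $C^{*}$-module (the $\mathcal{A}$-module dual of $M_{1} \tilde\oplus M_{2}$ splits correspondingly as $M_{1}^{*} \tilde\oplus M_{2}^{*}$, so if the total module coincides with its dual then each summand does). Applied to $\operatorname{Im} F^{\perp} \cong X \tilde\oplus M$, which is self-dual by hypothesis, this gives that both $X$ and $M$ are self-dual. Applied to $\ker D = (\ker D \cap \operatorname{Im} F) \tilde\oplus M$, which is self-dual by hypothesis, and combined with the self-duality of $M$ just obtained, this gives that $\ker D \cap \operatorname{Im} F$ is self-dual.

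Finally, since a finite direct sum of self-dual modules is self-dual, I conclude:
\[
\ker DF \cong \ker F \tilde\oplus (\ker D \cap \operatorname{Im} F)
\]
is self-dual (both summands being self-dual: the first by hypothesis on $F$, the second by the previous step), and
\[
\operatorname{Im} DF^{\perp} = \operatorname{Im} D^{\perp} \oplus X
\]
is self-dual (both summands being self-dual: $\operatorname{Im} D^{\perp}$ by hypothesis on $D$, and $X$ by the previous step). Together with the closedness of $\operatorname{Im} DF$, this gives $DF \in \mathcal{M}\Phi^{gc}(H_{\mathcal{A}})$.

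The main obstacle is the second step: verifying cleanly that self-duality is hereditary along the non-orthogonal direct summands arising in the proof of Proposition \ref{P01}. The decompositions produced there are topological rather than orthogonal, so one must appeal to the behaviour of the $\mathcal{A}$-valued dual under such splittings (as developed in Mishchenko--Troitsky) rather than to orthogonal projections. Everything else is purely an exercise in assembling the isomorphisms already constructed for Proposition \ref{P01}.
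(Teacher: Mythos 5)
Your overall architecture is the same as the paper's: reuse the decompositions from the proof of Proposition \ref{P01}, track self-duality through the summands $X$, $M$, $\ker D\cap \operatorname{Im}F$, and reassemble $\ker DF$ and $\operatorname{Im}DF^{\perp}$. The gap is in your second step. The principle you invoke --- that self-duality is inherited by \emph{topological} direct summands because ``the dual of $M_{1}\tilde\oplus M_{2}$ splits correspondingly as $M_{1}^{*}\tilde\oplus M_{2}^{*}$, so if the total module coincides with its dual then each summand does'' --- is not a valid argument as stated. The dual does split, via $f\mapsto (f\circ\pi_{1},f\circ\pi_{2})$, but the canonical embedding $x\mapsto\langle\cdot\,,x\rangle$ of the module into its dual is \emph{not} compatible with that splitting unless the sum is orthogonal: for $x_{1}\in M_{1}$ the functional $\langle\cdot\,,x_{1}\rangle$ need not vanish on $M_{2}$, so equality of the module with its dual says nothing summand-by-summand. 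The same unproved principle is hiding in your last step, where you assert that a (topological) direct sum of self-dual modules is self-dual.

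Both points are repairable, and the paper's proof shows how. First, the splittings you actually need are orthogonal by construction: $\operatorname{Im}F=(\ker D\cap\operatorname{Im}F)\oplus W$ comes from \cite[Theorem 2.3.3]{MT}, and since $M=\ker D\cap(W\oplus\operatorname{Im}F^{\perp})$ lies inside the orthogonal complement of $\ker D\cap\operatorname{Im}F$, the decomposition $\ker D=(\ker D\cap\operatorname{Im}F)\oplus M$ is orthogonal as well; self-duality does pass to orthogonal summands (extend a functional by zero on the complement), which settles $\ker D\cap\operatorname{Im}F$, $X$ and $M$. Second, for the assembly of $\ker DF=\ker F\,\tilde\oplus\,W$ the paper first upgrades the topological splitting to an orthogonal one using \cite[Proposition 2.5.4]{MT} (a self-dual submodule that is complemented is automatically orthogonally complemented), writes $\ker DF=\ker F\oplus\tilde W$ with $\tilde W\cong W\cong\ker D\cap\operatorname{Im}F$, and only then concludes that an orthogonal sum of self-dual modules is self-dual; the sum $\operatorname{Im}DF^{\perp}=\operatorname{Im}D^{\perp}\oplus X$ is already orthogonal. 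You correctly identify this hereditariness as ``the main obstacle,'' but the justification you offer for it does not close it; either make the orthogonality of the relevant splittings explicit or invoke \cite[Proposition 2.5.4]{MT} as the paper does.
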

\begin{proof}
Suppose that $DF \in  {\mathcal{M}\Phi}^{gc}(H_{\mathcal{A}}).$ Then $ \ker F, \ker D$ are self-dual and $Im F,$ $ Im D$ are closed. Now, $D_{\mid_{Im F}}$ is an operator onto $Im DF=Im D_{\mid_{Im F}}$  which is closed by assumption and it is adjointable as $D$ is so and $Im F$ is orthogonally complemntable by \cite[Theorem 2.3.3]{MT}. Hence, again by \cite[Theorem 2.3.3]{MT} we deduce that $\ker D_{\mid_{Im F}} = \ker D \cap Im F $ is orthogonally complementable in $Im F,$ so $Im F= (\ker D \cap Im F) \oplus M$ for some closed submodule $M.$ Therefore $H_{\mathcal{A}}=(\ker D \cap Im F) \oplus M \oplus Im F^{\perp}.$ It follows that $\ker D=(\ker D \cap Im F) \oplus  M^{\prime}$ where $M^{\prime}= \ker D \cap (M \oplus Im F^{\perp}).$  On the other hand by classical arguments, one can show that $\ker DF= \ker F \tilde{\oplus} W$ where $W \cong \ker D \cap Im F.$ Since $ \ker F$ is self dual, $ \ker F$  is therefore an orthogonal direct summand in $ \ker DF$  by \cite[Proposition 2.5.4]{MT}, so $ \ker DF=\ker F \oplus \tilde{W}$  for some closed submodule $\tilde{W} \cong W \cong \ker D \cap Im F.$ Since $\ker D \cap Im F $ is self-dual, so is $\tilde{W} ,$ hence, $ \ker DF$ is self-dual being orthogonal direct sum of two self-dual modules. \\
Next, from the proof of Proposition \ref{P01} we obtain that $Im DF^{\perp}=Im D^{\perp} \oplus X,$ where $Im F^{\perp} \cong X \oplus M.$ Since $Im F^{\perp}$ is self-dual, so is $X$ being an orthogonal direct summand in a self dual module. Finally since $Im D^{\perp}$ is self-dual, it follows that $Im DF^{\perp}=Im D^{\perp} \oplus X$ is self-dual also. 
\end{proof}
\begin{proposition} \label{P05}  
	Let $F,D \in B^{a}(H_{\mathcal{A}}),$ suppose that $ Im F, Im D$ are closed and $Im DF \in {\mathcal{M}\Phi}^{gc}(H_{\mathcal{A}}).$ Then the folloving statements hold:\\
	a)	$D \in {\mathcal{M}\Phi}^{gc}(H_{\mathcal{A}}) \Leftrightarrow F \in {\mathcal{M}\Phi}^{gc}(H_{\mathcal{A}})$\\
	b)	if $ \ker D$ is self-dual then $F,D \in {\mathcal{M}\Phi}^{gc}(H_{\mathcal{A}}) $\\
	c) if $Im F^{\perp}$ is self-dual, then $F,D \in {\mathcal{M}\Phi}^{gc}(H_{\mathcal{A}}) .$
\end{proposition}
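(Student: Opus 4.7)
The plan is to leverage four decompositions already developed in the proofs of Proposition \ref{P01} and Lemma \ref{L01}, and then propagate self-duality through them using two elementary principles. Under the standing hypothesis that $Im F$, $Im D$, and $Im DF$ are all closed, the four decompositions I would work with are: (i) $\ker DF = \ker F \oplus \tilde{W}$ with $\tilde{W} \cong \ker D \cap Im F$; (ii) $\ker D = (\ker D \cap Im F) \oplus M$; (iii) $Im DF^{\perp} = Im D^{\perp} \oplus X$ with $X = Im D \cap Im DF^{\perp}$; and (iv) $Im F^{\perp} \cong X \oplus M$ as an external direct sum. In (i)--(iii) all the $\oplus$ signs denote orthogonal direct sums.

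The first task is to verify orthogonality in (i). The restriction $F_{\mid_{\ker DF}}:\ker DF \to H_{\mathcal{A}}$ is adjointable, since $\ker DF$ is orthogonally complementable (as $DF$ is adjointable with closed range), and it has closed range $\ker D \cap Im F$, so its kernel $\ker F$ is orthogonally complementable in $\ker DF$ by \cite[Theorem 2.3.3]{MT}. Orthogonality in (ii) is already embedded in the proof of Proposition \ref{P01}, and (iii) follows from $H_{\mathcal{A}} = Im D \oplus Im D^{\perp} = Im DF \oplus Im DF^{\perp}$ combined with $Im DF \subseteq Im D$. Decomposition (iv) is the isomorphism produced in the proof of Proposition \ref{P01}.

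With these decompositions in place, parts (a), (b), (c) all reduce to repeated application of two principles: first, an orthogonal direct summand of a self-dual Hilbert $C^{*}$-module is self-dual; second, an external direct sum $A \oplus B$ is self-dual if and only if both $A$ and $B$ are self-dual. For (a), starting from $D \in {\mathcal{M}\Phi}^{gc}(H_{\mathcal{A}})$, decompositions (iii) and (ii) deliver self-duality of $X$ and $M$, (iv) then gives $Im F^{\perp}$ self-dual, and (i) gives $\ker F$ self-dual, so $F \in {\mathcal{M}\Phi}^{gc}(H_{\mathcal{A}})$; the converse reverses the direction in which (iv) is read. For (b), starting from $\ker D$ self-dual, (ii) yields $M$ and $\ker D \cap Im F$ self-dual, (iii) yields $X$ and $Im D^{\perp}$ self-dual, (iv) yields $Im F^{\perp}$ self-dual, and (i) yields $\ker F$ self-dual. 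Part (c) runs the same chain with the starting point shifted: $Im F^{\perp}$ self-dual extracts $X$ and $M$ via (iv) and the second principle, and the rest propagates identically.

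The main obstacle is purely bookkeeping: distinguishing the genuine orthogonal summands from those that are only topological or only isomorphic to an external sum, and in particular upgrading the topological decomposition $\ker DF = \ker F \tilde{\oplus} W$ from Lemma \ref{L01} to the orthogonal decomposition in (i). Once this upgrade is in place, each of (a), (b), (c) reduces to mechanically tracing self-duality through (i)--(iv) by the two principles above.
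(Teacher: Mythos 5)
Your proposal is correct and takes essentially the same approach as the paper: it rests on the same four decompositions extracted from the proofs of Proposition \ref{P01} and Lemma \ref{L01} (orthogonal complementability of $\ker F$ in $\ker DF$, of $\ker D\cap Im F$ in $\ker D$, of $Im D^{\perp}$ in $Im DF^{\perp}$, and the isomorphism $Im F^{\perp}\cong X\oplus M$), and propagates self-duality through orthogonal direct summands exactly as the paper does. The only difference is organizational: the paper proves (b) first, obtains (c) by passing to adjoints, and deduces (a) from (b) and (c), whereas you trace all three parts directly through the same decompositions; both routes are valid.
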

\begin{proof}
Let us prove b) first. If $DF$ is generalized $\mathcal{A}$-Fredholm, then $Im DF$ is closed and $ Im DF^{\perp}, \ker DF$ are self-dual. Now, observe that $Im DF = Im D_{\mid_{Im F}} = Im P_{Im D} D_{\mid_{Im F}} $ where $P_{Im D}$ denotes the orthogonal projection onto $Im D.$ Since $P_{Im D} D_{\mid_{Im F}}$ is adjointable, by \cite[Theorem 2.3.3]{MT}, we have that $Im DF$ is orthogonally complementable in $Im D.$ Hence $Im D= Im DF \oplus N$ for some closed submodule $N.$ Therefore $H_{\mathcal{A}}= Im DF \oplus N \oplus Im D^{\perp},$ so $ImDF^{\perp}=N \oplus Im D^{\perp}.$ Since $ Im DF^{\perp} $ is self-dual, so is $Im D^{\perp} ,$ being an orthogonal direct summand in $ Im DF^{\perp}.$ Next, since $ F(\ker DF)= \ker D \cap Im F$ and $F_{\mid_{\ker DF}}$ is adjointable, as $F$ is so and $ \ker DF$ is orthogonally complementable by \cite[Theorem 2.3.3]{MT}, we deduce that $ \ker F = \ker F_{\mid_{\ker DF}}$ orthogonally complementable in $\ker DF.$ Since $\ker DF$ is self-dual, it follows that $\ker F$ is self-dual, being orthogonal direct summand in $\ker DF.$ It remains to show that $Im F^{\perp}$ is self-dual. But, by earlier arguments, since $Im DF$ is closed, we have the $\ker D \cap Im F$ is orthogonally complementable $Im F,$ hence in $H_{\mathcal{A}}$ as $H_{\mathcal{A}}=Im F \oplus Im F^{\perp},$  and therefore in $\ker D.$ So $\ker D=( \ker D \cap Im F) \oplus M^{\prime}$ for some closed submodule $M^{\prime}.$ Moreover, again by arguments, we have then that $Im F^{\perp} \cong N \oplus M^{\prime}. $ Now, $N$ and $M^{\perp}$ are self dual, being orthogonal direct summands in $Im DF^{\perp}$ and $\ker D ,$ respectively, which are self-dual. Hence $M^{\prime} \oplus N$ is self-dual, thus $Im F^{\perp} $ is self-dual. By passing to the adjoints one may obtain c). To deduce a), use b) and c).	
\end{proof}
\begin{lemma} \label{L09}  
	Let $F \in B^{a}(H_{\mathcal{A}})   $ and supppose that $Im F$ is closed. Moreover, assume that there exist operators $D,D^{\prime} \in B^{a}(H_{\mathcal{A}})  $ with closed images such that $D^{\prime}F,FD \in \mathcal{M}\Phi^{gc} (H_{\mathcal{A}}) .$ Then $ F \in \mathcal{M}\Phi^{gc} (H_{\mathcal{A}}) .$
\end{lemma}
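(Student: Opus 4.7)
The plan is to verify directly the three requirements in Definition \ref{D01}(1): that $\mathrm{Im}\,F$ is closed, that $\ker F$ is self-dual, and that $\mathrm{Im}\,F^{\perp}$ is self-dual. Closedness of $\mathrm{Im}\,F$ is part of the hypothesis, so the work is to establish the two self-duality statements. The idea is to use $D'F$ to handle the kernel side and $FD$ to handle the cokernel side, in each case exhibiting $\ker F$ (resp.\ $\mathrm{Im}\,F^{\perp}$) as an orthogonal direct summand of a module already known to be self-dual; then Proposition 2.5.4 of \cite{MT} does the rest.

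For the kernel, I would reuse the argument appearing in the proof of Proposition \ref{P05}(b), applied with $D'$ in place of $D$. Since $D'F\in\mathcal{M}\Phi^{gc}(H_{\mathcal{A}})$, the module $\ker(D'F)$ is self-dual and $\mathrm{Im}\,(D'F)$ is closed. By \cite[Theorem 2.3.3]{MT}, $\ker(D'F)$ is orthogonally complementable, so $F\!\mid_{\ker(D'F)}$ is adjointable; moreover its image equals $\ker D'\cap\mathrm{Im}\,F$, which is closed as the intersection of two closed submodules. Applying \cite[Theorem 2.3.3]{MT} once more, $\ker F=\ker F\!\mid_{\ker(D'F)}$ is orthogonally complementable in $\ker(D'F)$. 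Self-duality of $\ker(D'F)$ then transfers to $\ker F$.

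For the cokernel, I would work dually with $FD$. Since $D$ has closed range and is adjointable, \cite[Theorem 2.3.3]{MT} gives $H_{\mathcal{A}}=\mathrm{Im}\,D\oplus \mathrm{Im}\,D^{\perp}$, so $F\!\mid_{\mathrm{Im}\,D}$ is adjointable, and its image $F(\mathrm{Im}\,D)=\mathrm{Im}\,(FD)$ is closed by hypothesis. Thus \cite[Theorem 2.3.3]{MT} shows that $\mathrm{Im}\,(FD)$ is orthogonally complementable in $H_{\mathcal{A}}$. Since $\mathrm{Im}\,(FD)\subseteq\mathrm{Im}\,F$, intersecting the orthogonal complement with $\mathrm{Im}\,F$ yields a closed submodule $N$ with $\mathrm{Im}\,F=\mathrm{Im}\,(FD)\oplus N$. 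Combining this with $H_{\mathcal{A}}=\mathrm{Im}\,F\oplus \mathrm{Im}\,F^{\perp}$ gives
$$ \mathrm{Im}\,(FD)^{\perp}=N\oplus \mathrm{Im}\,F^{\perp}. $$
Since $FD\in\mathcal{M}\Phi^{gc}(H_{\mathcal{A}})$, the left-hand side is self-dual, and therefore so is its orthogonal direct summand $\mathrm{Im}\,F^{\perp}$.

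The main technical point, which I expect to be the only delicate step, is justifying the orthogonal direct sum decomposition $\mathrm{Im}\,F=\mathrm{Im}\,(FD)\oplus N$ inside $\mathrm{Im}\,F$; everything else is a bookkeeping application of \cite[Theorem 2.3.3]{MT} and \cite[Proposition 2.5.4]{MT}. This step needs both the closedness of $\mathrm{Im}\,(FD)$ (given) and the adjointability of $F\!\mid_{\mathrm{Im}\,D}$, which in turn relies on $\mathrm{Im}\,D$ being orthogonally complementable — the reason the hypothesis that $D$ has closed range is used. With that in hand, putting the two halves together yields $F\in\mathcal{M}\Phi^{gc}(H_{\mathcal{A}})$.
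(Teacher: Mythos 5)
Your proof is correct and follows essentially the same route as the paper: both arguments are the two ``free'' halves of the proof of Proposition \ref{P05}(b), with $FD$ (where $F$ is the outer factor) giving self-duality of $\mathrm{Im}\,F^{\perp}$ as an orthogonal direct summand of $\mathrm{Im}\,(FD)^{\perp}$. The only cosmetic difference is on the kernel side: the paper passes to adjoints, noting $F^{*}(D')^{*}\in\mathcal{M}\Phi^{gc}(H_{\mathcal{A}})$ and reusing the cokernel argument for $\ker F=\mathrm{Im}\,(F^{*})^{\perp}$, whereas you run the kernel half of Proposition \ref{P05}(b)'s proof directly on $D'F$; both work.
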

\begin{proof}
	By the proof of Proposition \ref{P05}, part b), since $Im FD$ is in $\mathcal{M}\Phi^{gc} (H_{\mathcal{A}})  $ and $Im F, Im D$ are closed, it follows that $Im F^{\perp}  $ is self-dual. Now, by passing ro the adjoints we obtain that $F^{*}(D^{\prime})^{*} \in \mathcal{M}\Phi^{gc} (H_{\mathcal{A}})$ as $D^{\prime}F \in  \mathcal{M}\Phi^{gc} (H_{\mathcal{A}}).$ Moreover, by the proof of \cite[Theorem 2.3.3]{MT} part ii), $Im F^{*}, (Im D^{\prime})^{*} $  are closed, as $Im F,Im D^{\prime}  $ are so (by assumption). Hence, using the previous arguments, we deduce that $Im {F^{*}}^{\perp} =\ker F  $ is self-dual.
\end{proof}

\section{Remarks on non-adjointable semi-Fredholm operators }
From \cite[Definition 3]{IM}  it follows as in the proof of \cite[Lemma 2.7.10]{MT} that $F$ has the matrix
$
\begin{pmatrix}
F_1 & 0  \\
0 & \tilde{F_4}  \\
\end{pmatrix}
$
w.r.t. the decomposition $U(M_{1}) \tilde{\oplus} U(N_{1}) ‎\stackrel{F}{\longrightarrow} M_{2} \tilde{\oplus} N_{2}.$ Obviously,  such operators are invertible in ${{B(l_{2}(\mathcal{A}))}_{/} }_{K(l_{2}(\mathcal{A}))} .$ Now, if only $N_{1}  $ is finitely generated, we say that $F$ has upper inper (Noether) decomposition, whereas if only $N_{2}  $ is finitely generated, we say that $F$ has lower unner (Noteher) decomposition. Based on \cite[Definition 4]{IM} we give now the following definition. \\
\begin{definition} \label{D02}  
	We say that $F$ has upper external (Noether) decomposition if there exist closed $C^{*}$-modules $ X_{1},X_{2}$ where $ X_{2} $ finitely generated, s.t. the operator $F_{0}$ defined as
	$$F_{0}=\begin{pmatrix}
	F & E_{2}  \\
	E_{3} & 0  \\
	\end{pmatrix}=l_{2}^{\prime}(\mathcal{A}) \oplus X_{1}  \longrightarrow l_{2}^{\prime \prime}(\mathcal{A}) \oplus X_{1}$$ 
	is invertible and s.t. $Im E_{2}  $ is complementable in $l_{2}^{\prime \prime}(\mathcal{A}). $ Similarly, we say that $F$ has lower external (Noether) decomposition if the above decomposition exists, only in this case we assume that $X_{1}$ is finitely generated and that $ \ker E_{3}  $ is complementable in $l_{2}^{\prime}(\mathcal{A}).$ 	
\end{definition}
\begin{proposition} \label{P06}  
	 $A$ bounded $\mathcal{A}$-operator $F=l_{2}^{\prime}(\mathcal{A}) \longrightarrow  l_{2}^{\prime \prime}(\mathcal{A})  $ admits an upper external (Noether) decomposition iff it admits an upper inner (Noether) decomposition. Similarly, $F$ admits a lower external (Noether) decomposition iff $F$ admils a lower inner (Noether) decomposition.
\end{proposition}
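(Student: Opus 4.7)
The plan is to treat the upper case first; the lower case is entirely dual.

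For the direction ``upper inner $\Rightarrow$ upper external'', I start with an inner decomposition $l_{2}^{\prime}(\mathcal{A}) = M_{1} \oplus N_{1}$, $l_{2}^{\prime\prime}(\mathcal{A}) = M_{2} \oplus N_{2}$ in which $F$ has the upper-triangular block form of Definition \ref{D09}, $F_{1}$ is an isomorphism, and $N_{1}$ is finitely generated. I set $X_{1} := N_{2}$, $X_{2} := N_{1}$, let $E_{2}\colon N_{2} \hookrightarrow l_{2}^{\prime\prime}(\mathcal{A})$ be the canonical inclusion and $E_{3}\colon l_{2}^{\prime}(\mathcal{A}) \to N_{1}$ the projection along $M_{1}$. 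A direct computation using $F_{1}^{-1}$ shows that $F_{0} = \bigl(\begin{smallmatrix} F & E_{2} \\ E_{3} & 0 \end{smallmatrix}\bigr)$ is invertible; since $\operatorname{Im} E_{2} = N_{2}$ is complemented in $l_{2}^{\prime\prime}(\mathcal{A})$ by $M_{2}$ and $X_{2} = N_{1}$ is finitely generated, this yields an upper external decomposition.

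For the converse ``upper external $\Rightarrow$ upper inner'', I would write $F_{0}^{-1} = \bigl(\begin{smallmatrix} G & H_{2} \\ H_{3} & G_{4} \end{smallmatrix}\bigr)$. Expanding $F_{0}F_{0}^{-1} = I = F_{0}^{-1}F_{0}$ yields eight block identities; the key ones are $GE_{2} = 0$, $E_{3}G = 0$, $H_{3}E_{2} = I_{X_{1}}$, $E_{3}H_{2} = I_{X_{2}}$, $FG + E_{2}H_{3} = I$, $GF + H_{2}E_{3} = I$ and $FH_{2} = -E_{2}G_{4}$. Consequently $\pi := FG = I - E_{2}H_{3}$ and $\rho := GF = I - H_{2}E_{3}$ are bounded idempotents, producing the closed direct-sum decompositions $l_{2}^{\prime}(\mathcal{A}) = \operatorname{Im} G \,\tilde{\oplus}\, \operatorname{Im} H_{2}$ and $l_{2}^{\prime\prime}(\mathcal{A}) = \ker H_{3} \,\tilde{\oplus}\, \operatorname{Im} E_{2}$. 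I set $M_{1} := \operatorname{Im} G$, $N_{1} := \operatorname{Im} H_{2}$, $M_{2} := \ker H_{3}$, $N_{2} := \operatorname{Im} E_{2}$. The relation $E_{3}H_{2} = I_{X_{2}}$ makes $H_{2}$ a split monomorphism, so $N_{1} \cong X_{2}$ is finitely generated. I then check $F(M_{1}) \subseteq M_{2}$ (via $H_{3}FG = H_{3}\pi = 0$) and $F(N_{1}) \subseteq N_{2}$ (via $FH_{2} = -E_{2}G_{4} \in \operatorname{Im} E_{2}$), so $F$ is in fact block diagonal. Finally $F_{1} := F|_{M_{1}}$ has $G|_{M_{2}}$ as a two-sided inverse, since $\pi|_{M_{2}} = \mathrm{id}$ and $\rho|_{M_{1}} = \mathrm{id}$; this gives the required upper inner decomposition.

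For the lower statement the argument is symmetric: now $X_{1}$ is finitely generated and $\ker E_{3}$ is complementable in $l_{2}^{\prime}(\mathcal{A})$, and the same block manipulation of $F_{0}^{-1}$ yields an inner decomposition in which $N_{2} \cong X_{1}$ is finitely generated. The main technical hurdle is the external-to-inner direction: one must extract from the eight block relations for $F_{0}^{-1}$ not merely an upper-triangular form for $F$ but the stronger block-diagonal form with $F_{1}$ an isomorphism and the correct summand finitely generated. Once this bookkeeping is organized via the idempotents $FG$ and $GF$, the proof is a direct adaptation of the Mishchenko--Fomenko equivalence of inner and external Noether decompositions from the two-sided Fredholm setting to the present ``upper/lower'' semi-Fredholm setting, where only one of $X_{1}, X_{2}$ is assumed finitely generated.
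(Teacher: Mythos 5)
Your proof is correct and takes essentially the same route as the paper: the inner-to-external direction uses exactly the same choices $X_{1}=N_{2}$, $X_{2}=N_{1}$, $E_{2}$ the inclusion and $E_{3}$ the projection along $M_{1}$ (the paper packages this as the invertible $3\times 3$ block matrix $\bigl(\begin{smallmatrix} F_{1} & F_{2} & 0 \\ 0 & F_{4} & \mathrm{id} \\ 0 & \mathrm{id} & 0 \end{smallmatrix}\bigr)$ on $M_{1}\oplus N_{1}\oplus N_{2}$), while the external-to-inner direction is the standard Mishchenko--Fomenko block-inverse argument, which the paper does not write out but simply imports from the proof of Theorem 3 of Irmatov--Mishchenko. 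The one small divergence is that the paper explicitly invokes the complementability of $\operatorname{Im}E_{2}$ and $\ker E_{3}$ assumed in Definition \ref{D02} to recover the needed splittings, whereas your idempotents $FG$ and $GF$ show that these splittings (and hence those hypotheses) already follow from the invertibility of $F_{0}$ together with $H_{3}E_{2}=I_{X_{1}}$ and $E_{3}H_{2}=I_{X_{2}}$.
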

\begin{proof}
	As in the proof of \cite[Theorem 3]{IM}, we may let, when $F$ has an inner decomposition, the operator $F_{0}$ to be defined as 
	$$F_{0}=\begin{pmatrix}
	F_1 & F_2 & 0  \\
	0 & F_4 & id \\
	0 & id & 0 \\
	\end{pmatrix} : M_{1} \oplus N_{1} \oplus N_{2} \longrightarrow M_{2} \oplus N_{2} \oplus N_{1}.$$  Then $F_{0}  $ is invertible. Moreover, the operator $E_{2}: X_{1}=N_{2} \longrightarrow l_{2}^{\prime \prime}(\mathcal{A})=M_{2} \oplus N_{2}  $ is just the inclusion, hence  $Im E_{2}=N_{2} $ is complementable in $M_{2} \oplus N_{2}  = l_{2}^{\prime \prime}(\mathcal{A}) .$ Also, the operator $E_{3}: l_{2}^{\prime}(\mathcal{A}) =M_{1} \oplus N_{1} \longrightarrow X_{2} =N_{1}$ is simply the projection onto $ N_{1} $ along $M_{1}  ,$ so $ \ker E_{3} =M_{1}$ is complementable in $l_{2}^{\prime}(\mathcal{A}).$ To prove the other direction, when $F$ has an external decomposition, we may proceed in exactly the same way as in the proof of \cite[Theorem 3]{IM} . Indeed, to obtain (29) and (34), we use the assumptions in the definition of external decomposition that $Im E_{2}  $ and $\ker E_{3}$ are complementable in $l_{2}^{\prime \prime}(\mathcal{A})  $ and $l_{2}^{\prime}(\mathcal{A})  $ respectively.
\end{proof}
Clearly, any upper semi-Fredholm operator in the sense of our definition is also left invertible in ${{B(l_{2}(\mathcal{A}))}_{/} }_{K(l_{2}(\mathcal{A}))} ,$ whereas any lower semi-Fredholm operator is right invertible ${{B(l_{2}(\mathcal{A}))}_{/} }_{K(l_{2}(\mathcal{A}))}  $ (by upper and lower semi-Fredholm we mean here that $F$ admits upper and lower inner decomposition resp.). The converse also holds: 
\begin{proposition}
	If $F$ is left invertible in ${{B(l_{2}(\mathcal{A}))}_{/} }_{K(l_{2}(\mathcal{A}))}   ,$ then $F$ admitis upper inner decomposition. If $F$ is right invertible in ${{B(l_{2}(\mathcal{A}))}_{/} }_{K(l_{2}(\mathcal{A}))}   ,$ then it admitis lower inner decomposition. 
\end{proposition}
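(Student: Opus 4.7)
The idea is to reduce the claim to the inner Noether decomposition theorem from \cite{IM} applied to the operator $I-K$ arising from the left (resp. right) parametrix of $F$.

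\emph{Step 1.} By hypothesis, pick $G\in B(l_{2}(\mathcal{A}))$ and $K\in K(l_{2}(\mathcal{A}))$ with $GF = I-K$. Observe that $I-K$ is itself $\mathcal{A}$-Fredholm in the sense of Definition \ref{D08}, because $I$ is a two-sided parametrix for $I-K$ modulo $K(l_{2}(\mathcal{A}))$: $(I-K)\cdot I - I = -K$ and $I\cdot (I-K) - I = -K$. Therefore, by \cite[Theorem 3]{IM}, $I-K$ admits an inner Noether decomposition
$$
l_{2}(\mathcal{A}) = M_{1}\tilde{\oplus} N_{1}\stackrel{I-K}{\longrightarrow} M_{2}\tilde{\oplus} N_{2} = l_{2}(\mathcal{A}),
$$
with $N_{1},N_{2}$ finitely generated, and with respect to which $I-K$ has the upper-triangular shape of Definition \ref{D09}; in particular $(I-K)|_{M_{1}}\colon M_{1}\to M_{2}$ is an isomorphism.

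\emph{Step 2.} Since $GF|_{M_{1}} = (I-K)|_{M_{1}}$ is an isomorphism onto $M_{2}$, the restriction $F|_{M_{1}}$ is bounded below, hence injective with closed image $M_{2}':=F(M_{1})$.

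\emph{Step 3.} Define
$$
P := F\circ\bigl((I-K)|_{M_{1}}\bigr)^{-1}\circ \pi_{M_{2}}\circ G : l_{2}(\mathcal{A})\longrightarrow M_{2}',
$$
where $\pi_{M_{2}}$ is the projection onto $M_{2}$ along $N_{2}$. A direct computation, using the upper-triangular shape of $I-K$ (so that $\pi_{M_{2}}\circ G\circ F|_{M_{1}} = (I-K)|_{M_{1}}$), shows $P|_{M_{2}'}=\mathrm{id}$, hence $P^{2}=P$. Setting $N_{2}':=\ker P$ gives $l_{2}(\mathcal{A}) = M_{2}'\tilde{\oplus} N_{2}'$, and with respect to
$$
l_{2}(\mathcal{A}) = M_{1}\tilde{\oplus} N_{1}\stackrel{F}{\longrightarrow} M_{2}'\tilde{\oplus} N_{2}' = l_{2}(\mathcal{A}),
$$
the operator $F$ has matrix $\left[\begin{smallmatrix}F_{1} & F_{2}\\ 0 & F_{4}\end{smallmatrix}\right]$ with $F_{1}=F|_{M_{1}}$ an isomorphism and $N_{1}$ finitely generated. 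This is the required upper inner (Noether) decomposition of $F$.

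For the second statement, suppose $F$ is right invertible modulo compacts, $FG'=I-K'$. Then $I-K'$ is again $\mathcal{A}$-Fredholm with an inner Noether decomposition $l_{2}(\mathcal{A})= M_{1}'\tilde{\oplus}N_{1}' \to M_{2}'\tilde{\oplus}N_{2}'$ as above. The surjectivity of $(I-K')|_{M_{1}'}\colon M_{1}'\to M_{2}'$ allows one to take $M_{2}:=M_{2}'$ and $\tilde{M}_{1}:=G'(M_{2}')$, and then to build a dual projection onto a complement of $\tilde{M}_{1}$, producing a decomposition of $F$ in which the finitely generated summand appears on the codomain side, i.e.\ a lower inner decomposition.

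\emph{Main obstacle.} The crux lies in Step~3: verifying that $P$ is a well-defined bounded $\mathcal{A}$-linear idempotent with image exactly $M_{2}'$. This rests squarely on the upper-triangular shape of $I-K$ in its inner decomposition (so that $(I-K)(M_{1})\subseteq M_{2}$ and does not leak into $N_{2}$). A secondary issue is that in the non-adjointable setting of \cite{IM} one only has $\tilde{\oplus}$-decompositions, not orthogonal ones; fortunately these are precisely what the definition of an upper inner decomposition requires, so no extra complementability has to be produced.
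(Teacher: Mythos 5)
Your argument is correct and follows essentially the same route as the paper: both hinge on the idempotent $P = F\circ\bigl((I-K)|_{M_{1}}\bigr)^{-1}\circ \pi_{M_{2}}\circ G$ with image $F(M_{1})$, which is precisely the projection appearing as (45) in the proof of \cite[Theorem 5]{IM} that the paper invokes, and the second statement is in both cases obtained by running the same construction with the roles of $F$ and its parametrix exchanged. One small slip in your sketch of the lower case: the domain piece must be $\tilde{M}_{1}:=G'(M_{1}')$ rather than $G'(M_{2}')$, so that $F(\tilde{M}_{1})=FG'(M_{1}')=(I-K')(M_{1}')=M_{2}'$, making $F|_{\tilde{M}_{1}}$ an isomorphism onto $M_{2}'$ with the finitely generated summand $N_{2}'$ left over on the codomain side.
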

\begin{proof}
	If $GF=id+K^{\prime \prime}  $ for some $G:l_{2}^{\prime \prime}(\mathcal{A}) \longrightarrow l_{2}^{\prime }(\mathcal{A}) ,K^{\prime \prime} \in K(l_{2}(\mathcal{A})) ,$ then by following the proof of \cite[Theorem 5]{IM} we reach to (45) in \cite{IM}. Moreover, by this part of the proof of \cite[Theorem 5]{IM}, we also obtain that $G$ has the matrix
	$\begin{pmatrix}
	G_1 & G_2  \\
	0 & G_4  \\
	\end{pmatrix}  $ 
	w.r.t. the decomposition $l_{2}^{\prime \prime}(\mathcal{A})=M_{3} \oplus N_{3}  ‎\stackrel{G}{\longrightarrow} M_{2} \oplus N_{2}=l_{2}^{\prime}(\mathcal{A})$ where $G_{1}  $ is an isomorphism. Indeed, by (45) in \cite{IM} $ M_{3}=Im P =Im F K_{1}^{-1}p_{2}G .$ It follows that $M_{3}=F(M_{1}).$ Since $GF_{\mid_{M_{1}}}  $ is an isomorphism onto $M_{2} ,$ it follows that $G_{\mid_{F(M_{1})}}  $ is an isomorphism onto $M_{2}  .$ Then, considering the operator $G$ and applying the argumnets above, one deduces the second statement in the proposition. 
\end{proof}
The next lemma is again a corollary of \cite[Theorem 5]{IM}:
\begin{lemma} \label{L02}  
	Let $F,G$ be bounded $\mathcal{A}$-operators and suppose that $GF$ is Fredholm. Then there exist decompositions 
		$$l_{2}^{\prime}(\mathcal{A})=M_{1} \oplus N_{1}  ‎\stackrel{F}{\longrightarrow} l_{2}^{\prime \prime}(\mathcal{A})=M_{3} \oplus N_{3} \stackrel{G}{\longrightarrow} l_{2}^{\prime}(\mathcal{A})=M_2\oplus N_{2}$$
	w.r.t. which $F,G$ have matrices
	 $\begin{pmatrix}
	 F_1 & 0  \\
	 0 & F_4  \\
	 \end{pmatrix} ,$
	 $\begin{pmatrix}
	 G_1 & G_2  \\
	 0 & G_4  \\
	 \end{pmatrix} ,$ 
	respectively, where $F_{1} , G_{1} $ are isomorphisms, $N_{1},N_{2}$ are finitely generated.
\end{lemma}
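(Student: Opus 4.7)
The plan is to start from the Fredholm decomposition of $GF$ and to transport it through $F$ so as to produce a compatible intermediate decomposition of $l_{2}''(\mathcal{A})$. Since $GF$ is Fredholm, there is a decomposition $l_{2}'(\mathcal{A}) = M_{1} \oplus N_{1} \to l_{2}'(\mathcal{A}) = M_{2} \oplus N_{2}$ w.r.t. which $GF$ is block diagonal with $(GF)|_{M_{1}}$ an isomorphism onto $M_{2}$, and with $N_{1}, N_{2}$ finitely generated. Because $(GF)|_{M_{1}} = G \circ F|_{M_{1}}$, the restriction $F|_{M_{1}}$ is automatically injective with closed image $V := F(M_{1})$, and $G|_{V}$ is an isomorphism onto $M_{2}$. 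The natural candidate for the intermediate decomposition is therefore $M_{3} := V$ together with a suitable complement $N_{3}$.

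The key technical step, and the main obstacle, is producing a complement $N_{3}$ of $M_{3}$ in $l_{2}''(\mathcal{A})$ that contains $F(N_{1})$, since this is what forces the $(2,1)$ entry of $F$ to vanish. Following the strategy of \cite[Theorem 5]{IM}, I define the bounded idempotent
$$P := F \, \alpha \, \pi \, G : l_{2}''(\mathcal{A}) \longrightarrow l_{2}''(\mathcal{A}),$$
where $\alpha := ((GF)|_{M_{1}})^{-1} : M_{2} \to M_{1}$ and $\pi : l_{2}'(\mathcal{A}) \to M_{2}$ is the projection along $N_{2}$. Using $GF \circ \alpha = \mathrm{id}_{M_{2}}$ and $\pi^{2} = \pi$, a direct computation gives $P^{2} = P$, and $\mathrm{Im}\, P = V$. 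Setting $N_{3} := \ker P$ yields $l_{2}''(\mathcal{A}) = M_{3} \oplus N_{3}$. Moreover, for any $n \in N_{1}$, since $(GF)_{4}(n) \in N_{2}$ one has $\pi G F (n) = 0$, hence $P F(n) = 0$, so $F(N_{1}) \subseteq N_{3}$ as desired.

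With these decompositions fixed, reading off the matrix entries is routine. The $(1,2)$ entry of $F$ vanishes by construction of $M_{3}$ (since $F(M_{1}) = M_{3}$), and the $(2,1)$ entry vanishes because $F(N_{1}) \subseteq N_{3}$; the isomorphism $F_{1} := F|_{M_{1}} : M_{1} \to M_{3}$ is as required. For $G$, the $(2,1)$ entry vanishes because $G|_{M_{3}} = G|_{V}$ lands in $M_{2}$, and $G_{1} := G|_{M_{3}} : M_{3} \to M_{2}$ is the required isomorphism; the remaining entries $G_{2}, G_{4}$ on $N_{3}$ are unrestricted, matching the prescribed upper triangular form. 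Finitely generatedness of $N_{1}, N_{2}$ is inherited from the Fredholm decomposition of $GF$, completing the plan.
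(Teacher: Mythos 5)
Your proposal is correct and follows essentially the same route as the paper, which derives this lemma from the proof of \cite[Theorem 5]{IM}: the idempotent $P=F\alpha\pi G$ you construct is exactly the projection $FK_{1}^{-1}p_{2}G$ appearing in (45) of \cite{IM}, with $M_{3}=\mathrm{Im}\,P=F(M_{1})$ and $N_{3}=\ker P\supseteq F(N_{1})$ yielding the diagonal form of $F$ and the upper-triangular form of $G$. The only cosmetic point is that the closedness of $V=F(M_{1})$ is best cited as a consequence of $V=\mathrm{Im}\,P$ for the bounded idempotent $P$, rather than asserted beforehand.
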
 
From now on, throughout this section we will let ${{\mathcal{M}\Phi}}_{+}(l_{2}(\mathcal{A}))$ denote the set of all operators left invertible in $B(l_{2}(\mathcal{A})) / K(l_{2}(\mathcal{A})),$ whereas ${{\mathcal{M}\Phi}}_{-}(l_{2}(\mathcal{A}))$ will denote the set of all operators right invertible in $B(l_{2}(\mathcal{A})) / K(l_{2}(\mathcal{A})).$ Then we set  ${{\mathcal{M}\Phi}}(l_{2}(\mathcal{A})) = {{\mathcal{M}\Phi}}_{+}(l_{2}(\mathcal{A})) \cap {{\mathcal{M}\Phi}}_{-}(l_{2}(\mathcal{A}))$ Although the notation here coincides with notation in \cite{I} we do not assume the adjointability of operators here in this section.\\
Most of the results from \cite{I}, \cite{S} are also valid when we consider the non-adjointable semi-Fredholm operators and the same proofs can be applied. Here we are going slightly differnt fomulations and proofs of some of the results from   \cite{I}, \cite{S} which can not be transfered directly to the non-adjointable case.
\begin{lemma} \label{L07}  
	Let V be a finitely generated Hilbert submodule of $l_{2}(\mathcal{A}),$ $F \in B(l_{2}(\mathcal{A})$ and suppose that 
	$P_{V^{\bot}} F \in {{\mathcal{M}\Phi}}(l_{2}(\mathcal{A})),V^{\perp} (l_{2}(\mathcal{A}),V^{\bot})$ where $P_{V^{\bot}}$ is the orthogonal projection onto $V^{\bot}$ along V. Then $F \in {{\mathcal{M}\Phi}}_{-}(l_{2}(\mathcal{A})) .$
\end{lemma}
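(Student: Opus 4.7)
The plan is to prove directly that $F$ has a right inverse modulo $K(l_{2}(\mathcal{A}))$, which by the definition of $\mathcal{M}\Phi_{-}(l_{2}(\mathcal{A}))$ adopted just before this lemma is exactly what has to be shown. The main idea is that the perturbation $P_{V}F = F - P_{V^{\perp}}F$ is compact, since its image sits inside the finitely generated submodule $V$, so $F$ and $P_{V^{\perp}}F$ agree modulo $K(l_{2}(\mathcal{A}))$ (once $V^{\perp}$ is embedded back into $l_{2}(\mathcal{A})$). Thus a right parametrix for $P_{V^{\perp}}F$ should, after pre-composition with $P_{V^{\perp}}$, become a right parametrix for $F$.

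First I would extract a right parametrix: since by hypothesis $P_{V^{\perp}}F \in \mathcal{M}\Phi(l_{2}(\mathcal{A}),V^{\perp}) \subseteq \mathcal{M}\Phi_{-}(l_{2}(\mathcal{A}),V^{\perp})$, there exists a bounded $\mathcal{A}$-linear operator $G\colon V^{\perp}\to l_{2}(\mathcal{A})$ with $P_{V^{\perp}}F\,G - \mathrm{id}_{V^{\perp}} \in K(V^{\perp})$. I would then set $\widetilde{G} := G\,P_{V^{\perp}}\colon l_{2}(\mathcal{A})\to l_{2}(\mathcal{A})$ as the candidate right inverse for $F$ modulo compacts.

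Next I would compute
$$F\widetilde{G} \;=\; P_{V^{\perp}}F\,G\,P_{V^{\perp}} \;+\; P_{V}F\,G\,P_{V^{\perp}}.$$
Replacing $P_{V^{\perp}}F\,G$ by $\mathrm{id}_{V^{\perp}}$ modulo $K(V^{\perp})$, the first summand becomes $P_{V^{\perp}}$ plus a compact operator on $l_{2}(\mathcal{A})$. Since $V$ is finitely generated, the orthogonal projection $P_{V}$ lies in $K(l_{2}(\mathcal{A}))$, so $P_{V^{\perp}} = \mathrm{id}_{l_{2}(\mathcal{A})} - P_{V}$ equals $\mathrm{id}_{l_{2}(\mathcal{A})}$ modulo compacts. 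The second summand $P_{V}F\,G\,P_{V^{\perp}}$ has image contained in $V$, hence is also compact. Collecting everything yields $F\widetilde{G} = \mathrm{id}_{l_{2}(\mathcal{A})} + K'$ for some $K'\in K(l_{2}(\mathcal{A}))$, and therefore $F \in \mathcal{M}\Phi_{-}(l_{2}(\mathcal{A}))$ as required.

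The main obstacle I anticipate is purely technical bookkeeping in the non-adjointable Mishchenko framework of \cite{IM}: one must justify that any bounded $\mathcal{A}$-operator whose image lies in a finitely generated Hilbert submodule of $l_{2}(\mathcal{A})$ lies in $K(l_{2}(\mathcal{A}))$, and that a compact operator on $V^{\perp}$, when composed with the inclusion $V^{\perp}\hookrightarrow l_{2}(\mathcal{A})$ and with $P_{V^{\perp}}$, remains compact in the sense of \cite{IM}. Both follow from $K(l_{2}(\mathcal{A}))$ being a norm-closed two-sided ideal containing every finite-rank operator in the Hilbert $C^{*}$-module sense, and from the orthogonal complementability of $V$.
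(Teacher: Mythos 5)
Your proof is correct in substance, but it takes a genuinely different route from the paper. You work entirely in the quotient $B(l_{2}(\mathcal{A}))/K(l_{2}(\mathcal{A}))$: you extract a right parametrix $G$ for $P_{V^{\perp}}F$, pre-compose with $P_{V^{\perp}}$, and absorb the discrepancy $P_{V}F = F - P_{V^{\perp}}F$ together with $P_{V}$ itself into the compact ideal because $V$ is finitely generated. The paper instead stays at the level of decompositions: starting from an $\mathcal{M}\Phi$ decomposition $l_{2}(\mathcal{A}) = M_{1}\tilde\oplus N_{1} \to M_{2}\tilde\oplus N_{2} = V^{\perp}$ for $P_{V^{\perp}}F$, it observes that $(P_{V^{\perp}}F)_{1} = P_{M_{2}}F_{\mid M_{1}}$ where $P_{M_{2}}$ is now the projection onto $M_{2}$ along $N_{2}\tilde\oplus V$ in all of $l_{2}(\mathcal{A})$, so $F$ has an invertible corner with respect to $M_{1}\tilde\oplus N_{1} \to M_{2}\tilde\oplus(N_{2}\tilde\oplus V)$; the standard diagonalization by $U_{1}, U_{2}$ then produces an explicit lower inner (Noether) decomposition with finitely generated second summand $U_{2}^{-1}(N_{2}\tilde\oplus V)$. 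The paper's version has the advantage of exhibiting the decomposition itself (which is what later index arguments feed on) and of not needing any facts about the compact ideal; your version is shorter, but it quietly relies on several properties of $K(l_{2}(\mathcal{A}))$ in the non-adjointable framework of \cite{IM} — that an operator whose range lies in a finitely generated orthogonal summand is compact, and that compacts on $V^{\perp}$ remain compact after composing with the inclusion and $P_{V^{\perp}}$ — as well as on producing a right parametrix for the operator $P_{V^{\perp}}F$ acting between the two \emph{different} modules $l_{2}(\mathcal{A})$ and $V^{\perp}$, whereas the equivalence between inner decompositions and one-sided invertibility modulo compacts is only set up in the paper for endomorphisms of $l_{2}(\mathcal{A})$. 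These points are all true and you flag them, but they would need to be written out for the argument to be self-contained in this setting.
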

\begin{proof}
	Since V is finitely generated, by \cite[Lemma 2.3.7]{MT}, V is an orthogonal direct summand in $l_{2}(\mathcal{A})$, so $l_{2}(\mathcal{A})=V  \oplus V^{\bot}$. Consider the decomposition
	$$l_{2}(\mathcal{A}) = M_{1} \tilde \oplus {N_{1}}_{\overrightarrow{P_{V^{\bot}} F}} M_{2} \tilde \oplus N_{2}= V^{\bot} $$
	w.r.t. which $P_{V^{\bot}} F$ has the matrix
	\begin{center}
		$	\left\lbrack
		\begin{array}{ll}
		(P_{V^{\bot} }F)_{1} & 0 \\
		0 & P_{V^{\bot}} F)_{4} \\
		\end{array}
		\right \rbrack
		$
	\end{center}
	where $N_{1},N_{2}$ are finitely generated and $(P_{V^{\bot} }F)_{1}$ is isomorphism. Since $(P_{V^{\bot} }F)_{1}=P_{M_{2}}^{V^{\bot}}P_{V^{\bot}} F_{{\mid}_{{M}_{1}}}$ 
	where $P_{M_{2}}^{V^{\bot}}$ is the projection of $V^{\bot}$ onto $M_{2}$ along $N_{2},$ it follows that $P_{M_{2}}^{V^{\bot}}P_{V^{\bot}} F_{{\mid}_{{M}_{1}}}$ , is an isomorphism of $M_{1}$ onto $M_{2}$. But $l_{2}(\mathcal{A})=M_{2}\tilde \oplus N_{2}\tilde \oplus V$ and $P_{M_{2}}^{V^{\bot}}P_{V^{\bot}}=P_{M_{2}}$ where $P_{M_{2}}$ is the projection of $l_{2}(\mathcal{A})$ onto $M_{2}$ along $N_{2}\tilde \oplus V$. Hence 
	F has the matrix
	\begin{center}
		$\left\lbrack
		\begin{array}{ll}
		F_{1} & F_{2} \\
		F_{3} & F_{4} \\
		\end{array}
		\right \rbrack
		$
	\end{center}
	w.r.t. the decomposition $$l_{2}(\mathcal{A}) = M_{1} \tilde \oplus {N_{1}}_{\overrightarrow{F}}  M_{2} \tilde \oplus (N_{2} \tilde \oplus V)= l_{2}(\mathcal{A}) $$
	where $F_{1}= P_{M_{2}}F_{{\mid}_{{M}_{1}}}$ an isomorphism. Then w.r.t. the decomposition 
	$$l_{2}(\mathcal{A})=U_{1}( M_{1}) \tilde \oplus U_{1}(N_{1} )_{\overrightarrow{F}}  U_{2}^{-1}( M_{2}) \tilde \oplus U_{2}^{-1}(N_{2}\tilde \oplus V )=  l_{2}(\mathcal{A}) $$
	F has the matrix
	\begin{center}
		$\left\lbrack
		\begin{array}{ll}
		\overline{F}_{1} & 0 \\
		0 & \overline{F}_{4}  \\
		\end{array}
		\right \rbrack
		$
	\end{center}
	where
	\begin{center}
		$U_{1}=\left\lbrack
		\begin{array}{ll}
		1 & -F_{1}^{-1}F_{2} \\
		0 & 1 \\
		\end{array}
		\right \rbrack
		,$
	\end{center} 
	\begin{center}
		$U_{2}=\left\lbrack
		\begin{array}{ll}
		1 & 0 \\
		-F_{3}F_{1}^{-1} & 1 \\
		\end{array}
		\right \rbrack
		,$
	\end{center} 
	and $\tilde{F_{1}} $ are isomorphisms. Now, $N_{2}\tilde \oplus V$ is finitely generated, hence $U_{2}^{-1}(N_{2}\tilde \oplus V)$ is finitely generated also. 
\end{proof}
\begin{lemma} \label{L03}  
	Let $G,F \in B(l_{2}(\mathcal{A}))  ,$ suppose that $Im G$ is closed and that $\ker G$ and $Im G$ are complementable in $l_{2}(\mathcal{A}).$ If $GF \in  {\mathcal{M}}\Phi _{-}(l_{2}(\mathcal{A})) $ then $\sqcap  F \in  {\mathcal{M}}\Phi _{-}(l_{2}(\mathcal{A})), N  $ where $\ker G \tilde{\oplus} N= l_{2}(\mathcal{A}) $  and $\sqcap  $ denotes the projection onto $N$ along $\ker G.$ 
\end{lemma}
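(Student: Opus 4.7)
The plan is to exploit the factorization $GF = (G|_{N}) \circ \sqcap F$, which is valid because $G$ vanishes on $\ker G$. Since $GF \in \mathcal{M}\Phi_{-}(l_{2}(\mathcal{A}))$, by the diagonalization trick used in the proof of Proposition \ref{P06} I may fix a lower inner (Noether) decomposition
\[
l_{2}(\mathcal{A}) = M_{1} \tilde{\oplus} N_{1} \stackrel{GF}{\longrightarrow} M_{2} \tilde{\oplus} N_{2} = l_{2}(\mathcal{A})
\]
in which $GF$ is block diagonal, $(GF)_{1}: M_{1} \to M_{2}$ is an isomorphism, and $N_{2}$ is finitely generated. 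The hypothesis $l_{2}(\mathcal{A}) = \ker G \tilde{\oplus} N$ combined with closedness of $\mathrm{Im}\,G$ and Banach's open mapping theorem shows that $G|_{N}: N \to \mathrm{Im}\,G$ is a topological isomorphism, so $\sqcap F = (G|_{N})^{-1} \circ GF$.

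Next, I would transfer the codomain decomposition of $GF$ to a decomposition of $N$. Because $M_{2} = GF(M_{1}) \subseteq \mathrm{Im}\,G$, the projection $P_{N_{2}}$ onto $N_{2}$ along $M_{2}$ maps $\mathrm{Im}\,G$ into itself, giving $\mathrm{Im}\,G = M_{2} \tilde{\oplus} (\mathrm{Im}\,G \cap N_{2})$. Pulling back via the isomorphism $(G|_{N})^{-1}$, I set $\tilde{M}_{2} := (G|_{N})^{-1}(M_{2})$ and $\tilde{N}_{2} := (G|_{N})^{-1}(\mathrm{Im}\,G \cap N_{2})$, so that $N = \tilde{M}_{2} \tilde{\oplus} \tilde{N}_{2}$. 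Since $GF(M_{1}) = M_{2}$ and $GF(N_{1}) \subseteq N_{2}$, applying $(G|_{N})^{-1}$ yields $\sqcap F(M_{1}) = \tilde{M}_{2}$ with $\sqcap F|_{M_{1}}$ an isomorphism onto $\tilde{M}_{2}$, and $\sqcap F(N_{1}) \subseteq \tilde{N}_{2}$. Hence $\sqcap F$ is block diagonal with respect to the decomposition $l_{2}(\mathcal{A}) = M_{1} \tilde{\oplus} N_{1} \to \tilde{M}_{2} \tilde{\oplus} \tilde{N}_{2} = N$.

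The main obstacle is verifying that $\tilde{N}_{2}$ is finitely generated; since $(G|_{N})^{-1}$ is a bounded $\mathcal{A}$-linear isomorphism, this reduces to showing that $\mathrm{Im}\,G \cap N_{2}$ is finitely generated. This is where the complementability of $\mathrm{Im}\,G$ becomes crucial: pick a decomposition $l_{2}(\mathcal{A}) = \mathrm{Im}\,G \tilde{\oplus} X$ and let $P_{X}$ denote the projection onto $X$ along $\mathrm{Im}\,G$. Because $M_{2} \subseteq \mathrm{Im}\,G$, the restriction $P_{X}|_{N_{2}}: N_{2} \to X$ is surjective with kernel $\mathrm{Im}\,G \cap N_{2}$, and $\sigma := P_{N_{2}}|_{X}: X \to N_{2}$ furnishes a bounded section: for $x \in X$ one computes $P_{X}(\sigma(x)) = P_{X}(x - P_{M_{2}}x) = P_{X}(x) = x$, using $M_{2} \subseteq \mathrm{Im}\,G$. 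Consequently $N_{2} = (\mathrm{Im}\,G \cap N_{2}) \tilde{\oplus} \sigma(X)$, exhibiting $\mathrm{Im}\,G \cap N_{2}$ as a direct summand of the finitely generated module $N_{2}$, hence itself finitely generated by \cite[Definition 1.4.1]{MT}. This completes the lower inner Noether decomposition of $\sqcap F$ and shows $\sqcap F \in \mathcal{M}\Phi_{-}(l_{2}(\mathcal{A}),N)$.
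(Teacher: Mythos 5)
Your proof is correct, but it takes a genuinely different route from the paper's. The paper starts from the chain of decompositions of Lemma \ref{L02}, $M_{1}\tilde{\oplus}M_{2}\stackrel{F}{\to}R_{1}\tilde{\oplus}R_{2}\stackrel{G}{\to}N_{1}\tilde{\oplus}N_{2}$, diagonalizes $G$ on the middle space via an isomorphism $U$, locates $\ker G$ inside $U(R_{2})$ so that $U(R_{2})=\ker G\,\tilde{\oplus}\,(U(R_{2})\cap N)$, and then checks that $\sqcap_{\mid_{R_{1}}}$ is an isomorphism in order to assemble a lower inner decomposition for $\sqcap F$; along the way it must also readjust the domain decomposition of $F$ to $M_{1}\tilde{\oplus}\tilde{U}(M_{2})$. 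You instead work entirely on the codomain side: from the single identity $\sqcap F=(G_{\mid_{N}})^{-1}\circ GF$ (valid since $G$ kills $\ker G$ and $G_{\mid_{N}}$ is a topological isomorphism onto the closed $\mathrm{Im}\,G$ by the open mapping theorem) you transport a diagonalized $\mathcal{M}\Phi_{-}$ decomposition of $GF$ through $(G_{\mid_{N}})^{-1}$, which yields the decomposition $M_{1}\tilde{\oplus}N_{1}\to\tilde{M}_{2}\tilde{\oplus}\tilde{N}_{2}=N$ without touching the intermediate space or the domain at all. Both arguments hinge on the same key point, and use the complementability of $\mathrm{Im}\,G$ in the same way: the finitely generated defect ($G(U(R_{2})\cap N)$ in the paper, $\mathrm{Im}\,G\cap N_{2}$ in your version) must be exhibited as a direct summand of the finitely generated module $N_{2}$; your explicit bounded section $\sigma=P_{N_{2}\mid_{X}}$ does this cleanly. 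Your route is shorter and avoids Lemma \ref{L02} and the simultaneous treatment of $F$ and $G$; the paper's route produces, as a by-product, a diagonal form of $F$ itself with respect to the intermediate decomposition, which is not needed for the stated conclusion.
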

\begin{proof}
	By the arguments from the proof of Lemma \ref{L02}, since \\
	$GF \in {\mathcal{M}}\Phi _{-}(l_{2}(\mathcal{A})),$ there exists a chain of decompositions 
	$$ l_{2}(\mathcal{A}) = M_{1} \tilde \oplus {M_{2}}‎‎\stackrel{F}{\longrightarrow} {R_{1} \tilde \oplus R_{2}} \stackrel{G}{\longrightarrow} {N_{1} \tilde \oplus N_{2}} $$ 
	w.r.t. which $F$ and $G$ have matrices 
	$\begin{pmatrix}
		F_1 & 0  \\
		0 & F_4  \\
	\end{pmatrix}
	$,
	$\begin{pmatrix}
	G_1 & G_{2}  \\
	0 & G_4  \\
	\end{pmatrix}
	$
	wher $F_{1},G_{1}$ are isomorphisms and $N_{2}$ is finitely generated Indeed, considering the $\mathcal{M}\Phi _{-} $ decomposition $M_{1} \tilde \oplus {M_{2}}‎‎\stackrel{GF}{\longrightarrow} {N_{1} \tilde \oplus N_{2}}   ,$ the arguments of the proof of  until (45) in \cite{MT} applies also in the case when $N_{1}  $ on $N_{2}  $ are not finitely generated. Hence $G$ has the matrix
	$\begin{pmatrix}
	G_1 & 0  \\
	0 & \tilde{G_4}  \\
	\end{pmatrix}
	$
	w.r.t. the decomposition $R_{1} \tilde \oplus {U(R_{2})}‎‎\stackrel{G}{\longrightarrow} {N_{1} \tilde \oplus N_{2}} $ where $U$ is an isomorphism. It is not hard to see that $ker G  \subseteq U(R_{2})  .$ Since $\ker G \tilde{\oplus} N= l_{2}(\mathcal{A}) $ and $ker G  \subseteq U(R_{2})  ,$ we get that $U(R_{2}) = \ker G \tilde{\oplus} (U(R_{2}) \cap N).$ As $Im G$ is closed, $G_{\mid_{N}}  $ is an isomorphism onto $Im G$ by open mapping theorem. Hence $G_{\mid_{(U(R_{2}) \cap N)}}  $ is an isomorphism. Thus $Im G=N_{1} \tilde{\oplus} G (U(R_{2}) \cap N)  .$ As $Im G$ is complementable in $l_{2}(\mathcal{A})  ,$ we have that $ G (U(R_{2}) \cap N)  $ is also complementable in $ l_{2}(\mathcal{A}) .$ Since $G (U(R_{2}) \cap N) \subseteq N_{2} ,$ it follows that $G (U(R_{2}) \cap N)  $ is complementable in $N_{2}  $ also. But $N_{2}  $ is finitely generated, hence $G (U(R_{2}) \cap N)  $ must be finitely generated being a direct summand in $N_{2}   .$ Hence $U(R_{2}) \cap N  $ is finitely generated being isomorphic to $G (U(R_{2}) \cap N)  .$ W.r.t. the decomposition $M_{1} \tilde \oplus {M_{2}}‎‎\stackrel{F}{\longrightarrow} R_{1} \tilde \oplus U(R_{2})  , $ $F$ has the matrix 
	$
	\begin{pmatrix}
	F_1 & \tilde{F_{2}}  \\
	0 & \tilde{F_{4}}  \\
	\end{pmatrix},
	$
	hence $F$ has the matrix
	$\begin{pmatrix}
	F_1 & 0  \\
	0 & \tilde{\tilde{F_{4}}}  \\
	\end{pmatrix},  $ 
	w.r.t. the decomposition $M_{1} \tilde \oplus {\tilde{U}(M_{2})}‎‎\stackrel{F}{\longrightarrow} R_{1} \tilde{\oplus} U(R_{2}) $ where $\tilde{U}$ is an isomorphism. Moreover, since $l_{2}(\mathcal{A})= R_{1} \tilde{\oplus} (U(R_{1}) \cap N) \tilde{\oplus} \ker G  ,$ it follows that $\sqcap_{\mid_{R_{1}}}  $ is an isomorphism (recall that $ \sqcap $ is the projection onto $N$ along $\ker G  .$) It is then easy to see that $\sqcap F  $ has the matrix
	$\begin{pmatrix}
	(\sqcap F)_1 & 0  \\
	0 &  (\sqcap F)_4  \\
	\end{pmatrix},  $ 
	w.r.t. the decomposition $M_{1} \tilde {\oplus} {\tilde{U}(M_{2})}‎‎\stackrel{\sqcap F}{\longrightarrow} \sqcap (R_{1}) \tilde{\oplus} (U(R_{1}) \cap N)  $ where $(\sqcap F)_{1}  $ is an isomorphism. Now, $ U(R_{1}) \cap N $ is finitely generated.
\end{proof}
Recall now the definition of classes ${{\mathcal{M}}\Phi} _{+}^{- \prime}(l_{2}(\mathcal{A})),{{\mathcal{M}}\Phi} _{+}^{+ \prime}(l_{2}(\mathcal{A}))  ,$ from \cite{I}. Again we are going to use the same notation, but we are not going to assume adjointability.
\begin{lemma} \label{L04}  
	$ F \in B((l_{2}(\mathcal{A}))) $ admits upper external (Noether) decomposition with the property that $X_{2} \preceq X_{1}  $ iff $F \in {{\mathcal{M}}\Phi} _{+}^{- \prime}(l_{2}(\mathcal{A}))  .$ Similarly $F$  admits lower external (Noether) decomposition with the property that $X_{1} \preceq X_{2}  $  iff $F \in {{\mathcal{M}}\Phi} _{+}^{- \prime}(l_{2}(\mathcal{A}))    .$ 
\end{lemma}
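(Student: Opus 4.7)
The plan is to reduce this lemma to Proposition~\ref{P06} by carefully tracking the correspondence between the modules $(N_{1},N_{2})$ of an upper inner Noether decomposition and the modules $(X_{1},X_{2})$ of an upper external Noether decomposition, and to verify that the side condition ``$X_{2}\preceq X_{1}$'' corresponds exactly to ``$N_{1}\preceq N_{2}$''. The lemma will then drop out of the definition of $\mathcal{M}\Phi_{+}^{-\prime}(l_{2}(\mathcal{A}))$ recalled (in its non-adjointable form) from Definition~\ref{D06}.

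For the easier direction $F\in\mathcal{M}\Phi_{+}^{-\prime}(l_{2}(\mathcal{A}))\Rightarrow F$ admits an upper external decomposition with $X_{2}\preceq X_{1}$, I would take an upper inner decomposition $M_{1}\tilde{\oplus}N_{1}\to M_{2}\tilde{\oplus}N_{2}$ with $N_{1}$ finitely generated and $N_{1}\preceq N_{2}$, and apply the explicit forward construction in the proof of Proposition~\ref{P06}. This yields the invertible operator $F_{0}:M_{1}\oplus N_{1}\oplus N_{2}\to M_{2}\oplus N_{2}\oplus N_{1}$, so $X_{1}=N_{2}$ and $X_{2}=N_{1}$. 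Then $X_{2}=N_{1}$ is finitely generated, $\mathrm{Im}\,E_{2}$ and $\ker E_{3}$ are complementable exactly as established in the proof of Proposition~\ref{P06}, and the condition $X_{2}\preceq X_{1}$ reduces to $N_{1}\preceq N_{2}$, which holds by assumption.

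For the converse, starting from an upper external decomposition of $F$ with $X_{2}\preceq X_{1}$, I would invoke the second half of the proof of Proposition~\ref{P06} (which follows the Schur-complement-type construction of \cite[Theorem~3]{IM}) to produce an upper inner decomposition $M_{1}\tilde{\oplus}N_{1}\to M_{2}\tilde{\oplus}N_{2}$ of $F$. The crucial step is to read off from that construction that the resulting summands satisfy $N_{1}\cong X_{2}$ and $N_{2}\cong X_{1}$ as Hilbert $C^{*}$-modules, so that $N_{1}$ is finitely generated and $X_{2}\preceq X_{1}$ becomes $N_{1}\preceq N_{2}$, giving $F\in\mathcal{M}\Phi_{+}^{-\prime}(l_{2}(\mathcal{A}))$. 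The second (lower) assertion then follows by the completely symmetric argument, interchanging the roles of $N_{1}, N_{2}$ and of $X_{1}, X_{2}$ throughout (so that $X_{1}\preceq X_{2}$ translates to $N_{2}\preceq N_{1}$, as required in the definition of $\mathcal{M}\Phi_{-}^{+\prime}(l_{2}(\mathcal{A}))$).

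The main obstacle will be the module-level identification $N_{1}\cong X_{2}$ and $N_{2}\cong X_{1}$ needed in the converse direction. A purely index-theoretic argument, via Definitions~\ref{D10} and~\ref{D12}, only yields the $K(\mathcal{A})$-identity $[N_{2}]-[N_{1}]=[X_{1}]-[X_{2}]$, which is far too weak to conclude $N_{1}\preceq N_{2}$. One must instead unpack the explicit inverse of the matrix operator $F_{0}$ used in the construction (equations~(29) and~(34) of \cite{IM}), and read off the actual modules $M_{1},N_{1},M_{2},N_{2}$ from the images and kernels of the blocks of $F_{0}^{-1}$, matching them to $X_{1}$ and $X_{2}$ up to isomorphism. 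I expect this bookkeeping of the Schur-complement construction to be the most delicate part of the argument.
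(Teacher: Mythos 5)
Your proposal is correct and follows essentially the same route as the paper, whose entire proof consists of the remark that the statements follow as in the proof of Proposition~\ref{P06}; you simply make explicit the identifications $X_{1}=N_{2}$, $X_{2}=N_{1}$ (forward direction) and $N_{1}\cong X_{2}$, $N_{2}\cong X_{1}$ (converse, read off from the surjectivity of $E_{3}$ and injectivity of $E_{2}$ forced by the invertibility of $F_{0}$) under which ``$X_{2}\preceq X_{1}$'' becomes ``$N_{1}\preceq N_{2}$''. If anything, your write-up is more detailed than the paper's, and your observation that a purely $K(\mathcal{A})$-index comparison would not suffice is a sound caution.
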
 
\begin{proof}
	 Statements can Be shown in a similar way as in the proof of Proportion \ref{P06}. 
\end{proof}
\begin{lemma} \label{L05}  
	 Let $F \in {{\mathcal{M}}\Phi} _{-}^{+ \prime}(l_{2}(\mathcal{A}))    .$ Then $ F+K \in {{\mathcal{M}}\Phi} _{-}^{+ \prime}(l_{2}(\mathcal{A}))  $ for all $K \in K(l_{2}(\mathcal{A})).$ 
\end{lemma}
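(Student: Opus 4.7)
The plan is to exploit Lemma~\ref{L04}, which characterizes $\mathcal{M}\Phi_{-}^{+\prime}(l_{2}(\mathcal{A}))$ via lower external (Noether) decompositions satisfying $X_{1}\preceq X_{2}$, and then to apply the Mishchenko perturbation theorem \cite[Theorem~3]{IM} to a suitable $2\times 2$ extension of $F$.

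First, by Lemma~\ref{L04} the hypothesis $F\in\mathcal{M}\Phi_{-}^{+\prime}(l_{2}(\mathcal{A}))$ yields a finitely generated $X_{1}$, a closed submodule $X_{2}$ with $X_{1}\preceq X_{2}$, and bounded operators $E_{2},E_{3}$ satisfying the complementability conditions of Definition~\ref{D02}, such that
\[
F_{0}=\begin{pmatrix} F & E_{2}\\ E_{3} & 0 \end{pmatrix}\colon l_{2}(\mathcal{A})\oplus X_{1}\longrightarrow l_{2}(\mathcal{A})\oplus X_{2}
\]
is invertible. Extending $K$ by zero on the $X_{1}$-summand gives a compact operator $\tilde{K}=\begin{pmatrix} K & 0\\ 0 & 0\end{pmatrix}$, and $F_{0}+\tilde{K}$ is the natural $2\times 2$ lift of $F+K$. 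Since $F_{0}$ is invertible and $\tilde{K}$ is compact, $F_{0}+\tilde{K}$ is invertible modulo compact operators, hence a Fredholm $\mathcal{A}$-operator in the sense of Definition~\ref{D08}.

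Next, I would apply \cite[Theorem~3]{IM} to $F_{0}+\tilde{K}$ to obtain finitely generated modules $Y_{1},Y_{2}$ and bounded operators $\tilde{E}_{2},\tilde{E}_{3}$ producing an invertible external extension
\[
\begin{pmatrix} F_{0}+\tilde{K} & \tilde{E}_{2}\\ \tilde{E}_{3} & 0\end{pmatrix}\colon (l_{2}(\mathcal{A})\oplus X_{1})\oplus Y_{1}\longrightarrow (l_{2}(\mathcal{A})\oplus X_{2})\oplus Y_{2}.
\]
Since the index (Definition~\ref{D12}) is preserved by compact perturbations and $F_{0}$ is invertible, $[Y_{1}]=[Y_{2}]$ in $K_{0}(\mathcal{A})$; stabilizing both $Y_{i}$ by a common finitely generated summand $Z$ then upgrades this to $Y_{1}\oplus Z\cong Y_{2}\oplus Z$, and I absorb $Z$ into the extended matrix via identity blocks. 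Reassociating the resulting $3\times 3$ block structure produces a lower external (Noether) decomposition for $F+K$ over $l_{2}(\mathcal{A})\oplus(X_{1}\oplus Y_{1}\oplus Z)\to l_{2}(\mathcal{A})\oplus(X_{2}\oplus Y_{2}\oplus Z)$ with $X_{1}\oplus Y_{1}\oplus Z\preceq X_{2}\oplus Y_{2}\oplus Z$ (inherited from $X_{1}\preceq X_{2}$ together with $Y_{1}\oplus Z\cong Y_{2}\oplus Z$), and Lemma~\ref{L04} then yields $F+K\in\mathcal{M}\Phi_{-}^{+\prime}(l_{2}(\mathcal{A}))$.

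The main obstacle I expect is verifying, after the reassociation, that the complementability requirements of Definition~\ref{D02} on the kernel of the new outer $E_{3}'$-block and the image of the new outer $E_{2}'$-block in $l_{2}(\mathcal{A})$ survive: this should follow from combining the complementability of $\ker E_{3}$ and $\operatorname{Im}E_{2}$ in the original external decomposition with the analogous complementability produced by \cite[Theorem~3]{IM} for the outer layer, together with the fact that identity blocks on the stabilizing summand $Z$ contribute trivially complementable kernels and images, but this requires careful bookkeeping across the reassociation.
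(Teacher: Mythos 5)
There is a genuine gap at the reassociation step, and it is precisely the step that carries the whole argument. Writing $\tilde{E}_{2}=\bigl(\begin{smallmatrix}\tilde{E}_{2}^{(1)}\\ \tilde{E}_{2}^{(2)}\end{smallmatrix}\bigr):Y_{1}\to l_{2}(\mathcal{A})\oplus X_{2}$ and $\tilde{E}_{3}=(\tilde{E}_{3}^{(1)}\;\tilde{E}_{3}^{(2)}):l_{2}(\mathcal{A})\oplus X_{1}\to Y_{2}$, your $3\times 3$ matrix, regrouped as $l_{2}(\mathcal{A})\oplus(X_{1}\oplus Y_{1})\to l_{2}(\mathcal{A})\oplus(X_{2}\oplus Y_{2})$, has bottom-right block $\bigl(\begin{smallmatrix}0 & \tilde{E}_{2}^{(2)}\\ \tilde{E}_{3}^{(2)} & 0\end{smallmatrix}\bigr)$, which is not zero in general; so it is not of the form $\bigl(\begin{smallmatrix}F+K & E_{2}'\\ E_{3}' & 0\end{smallmatrix}\bigr)$ demanded by Definition \ref{D02} (compare the explicit $3\times 3$ matrix in the proof of Proposition \ref{P06}, whose $(3,3)$ corner really is $0$). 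This is not cosmetic: killing that block requires row/column operations that change the corner modules, and your subsequent claim $X_{1}\oplus Y_{1}\oplus Z\preceq X_{2}\oplus Y_{2}\oplus Z$ depends on the corner modules being literally these direct sums. Note also that the index only gives $[Y_{1}]=[Y_{2}]$ in $K_{0}(\mathcal{A})$, i.e.\ stable isomorphism; the relation $\preceq$ is not a $K_{0}$ invariant, so whatever valid external decomposition you eventually extract will need its $\preceq$ condition re-derived from scratch, not read off from the index. Finally, the complementability of $\ker E_{3}'$ required by Definition \ref{D02}, which you flag but defer, is an intersection condition ($\ker E_{3}\cap\ker\tilde{E}_{3}^{(1)}$ after regrouping) and does not follow formally from the complementability of the two pieces.

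For comparison, the paper avoids external decompositions entirely: it takes an inner $\mathcal{M}\Phi_{-}^{+\prime}$ decomposition $M_{1}\tilde{\oplus}M_{2}\stackrel{F}{\to}N_{1}\tilde{\oplus}N_{2}$ with $N_{2}\preceq N_{1}$, arranges $N_{2}\preceq L_{n}$ with $L_{n}=N_{2}\tilde{\oplus}P$ and $M_{2}=L_{n}^{\perp}\oplus P$, chooses $n$ large enough that $\|q_{n}K\|<\|F_{1}^{-1}\|^{-1}$ (compactness of $K$ in the sense of \cite{IM}), and then runs the perturbation argument of \cite[Lemma 2.7.13]{MT}, using $N_{2}\preceq N_{1}$ directly to verify the $\preceq$ condition for the perturbed decomposition. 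That direct route keeps explicit control of the new corner modules, which is exactly what your external-decomposition route loses. If you want to salvage your approach, you would need to prove a genuine "stability of lower external decompositions with $X_{1}\preceq X_{2}$ under compact perturbation" statement, which essentially amounts to redoing the inner-decomposition argument anyway.
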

\begin{proof}
	Let $l_{2}(\mathcal{A})= M_{1} \tilde \oplus {M_{2}}‎‎\stackrel{F}{\longrightarrow} N_{1} \tilde \oplus N_{2}=l_{2}(\mathcal{A})  $ be an ${{\mathcal{M}}\Phi} _{-}^{+ \prime}  $ decomposition for $F.$ Then $N_{2}  $ is finitely generated and $N_{2} \preceq  N_{1}  .$ We may assume that $N_{2} \preceq L_{n}, L_{n}=N_{2} \tilde{\oplus} P   $ and $M_{2}= L_{n}^{\perp} \oplus P $ for some $n \in \mathbb{N}  $ and $P$ fintely generated. Moreover, we may cloose an $ n $ big enough s.t. $ 	\parallel q_{n}K	\parallel < \parallel F_{1}^{-1} \parallel^{-1} .$ Then we may proceed as in the proof of \cite[Lemma 2.7.13]{MT} to and use that $ N_{2} \preceq N_{1}  $ to deduce the lemma.	
\end{proof}

As regards \cite{S}, we need to slight reformulate some definitions and results from that paper when we consider the nonadjointable case. 
\begin{definition} \label{D03}  
	We set $\widehat{\widehat{{\mathcal{M}\Phi}}}_{+}^{-}(l_{2}(\mathcal{A}))  $ to be as the set $\widehat{{\mathcal{M}\Phi}}_{+}^{-}( H_{\mathcal{A}})  $ in \cite{I2}, but we demand that $ R(PF_{\mid_{ R(P) }}) $ should be complementable in $R(P)  ,$ instead of the adjointability of $P.$ 
\end{definition}
Recall from \cite{S} that $P(l_{2}( \mathcal{A} ))  $ denote the set of projections, not necessarily adjointable, with finitely generated kernel. Put 
$$\sigma_{e \tilde{a} 0}^{ \mathcal{A} } (F)= \lbrace \alpha \in Z(  \mathcal{A}  ) \mid (F- \alpha I) \notin  \widehat{\widehat{{\mathcal{M}\Phi}}}_{+}^{-}(l_{2}(\mathcal{A})) \rbrace.$$ 
Then we have the following non adjointability version of \cite[Theorem 2]{I2} :
\begin{theorem} \label{T01}  
	For $F \in B(l_{2}(\mathcal{A}))  $ we have 
	$$\sigma_{e \tilde{a} 0}^{ \mathcal{A} } (F)=\cap \lbrace \sigma_{a 0}^{ \mathcal{A} } (PF_{\mid_{ R(P) }}) \mid P \in P (l_{2}(\mathcal{A}))  \rbrace  $$
	where $\sigma_{a 0}^{ \mathcal{A} } (PF_{\mid_{ R(P) }})= \lbrace \alpha \in Z(  \mathcal{A}  ) \mid (PF- \alpha I)_{R(P)} \rbrace $ is bounded below on $R(P)$ or that $ R(PF - \alpha P) $ is complementable in $R(P) \rbrace .$ 
\end{theorem}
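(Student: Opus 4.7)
The plan is to mimic the strategy of \cite[Theorem 2]{I2}, replacing each use of adjointability of projections with the explicit complementability hypothesis built into Definition \ref{D03}. First, by passing from $F$ to $F-\alpha I$, I reduce to proving: $F \in \widehat{\widehat{\mathcal{M}\Phi}}_{+}^{-}(l_{2}(\mathcal{A}))$ if and only if there exists $P \in P(l_{2}(\mathcal{A}))$ such that $PF_{\mid R(P)}$ is bounded below on $R(P)$ or such that $R(PF)$ is complementable in $R(P)$. Then applying this equivalence with $F-\alpha I$ in place of $F$ and noting that $(PF-\alpha I)_{\mid R(P)}=(P(F-\alpha I))_{\mid R(P)}$ (since $P$ fixes $R(P)$) identifies the two sets appearing in the theorem.

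For the ($\supseteq$) direction I would argue directly from the decomposition. Assuming $F \in \widehat{\widehat{\mathcal{M}\Phi}}_{+}^{-}(l_{2}(\mathcal{A}))$ with data $l_{2}(\mathcal{A}) = M_{1} \tilde\oplus N_{1} \to M_{2} \tilde\oplus N_{2}$, $F = \mathrm{diag}(F_{1},F_{4})$, $F_{1}$ an isomorphism, $N_{1}$ finitely generated, together with the witnesses $N_{2}^{\prime} \subseteq N_{2}$, $N_{2}^{\prime} \cong N_{1}$, and $l_{2}(\mathcal{A}) = N \tilde\oplus N_{1} = N \tilde\oplus N_{2}^{\prime}$, I would take $P$ to be the projection onto $N$ along $N_{2}^{\prime}$. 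Since $\ker P = N_{2}^{\prime} \cong N_{1}$ is finitely generated, $P \in P(l_{2}(\mathcal{A}))$, and the complementability of $R(PF_{\mid R(P)})$ in $R(P) = N$ is precisely the modified clause in Definition \ref{D03}. Hence $0 \notin \sigma_{a 0}^{\mathcal{A}}(PF_{\mid R(P)})$.

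The harder direction ($\subseteq$) is where I expect the main work. Fix $P \in P(l_{2}(\mathcal{A}))$ witnessing the failure, so $\ker P$ is finitely generated and either $PF_{\mid R(P)}$ is bounded below or $R(PF)$ is complementable in $R(P)$. By \cite[Lemma 2.3.7]{MT} I may arrange $l_{2}(\mathcal{A}) = R(P) \oplus \ker P$ after a conjugation of the form used in the proofs of Lemma \ref{L07} and Lemma \ref{L03}. Setting $N_{1} := \ker P$, I would then build $M_{1} \subseteq l_{2}(\mathcal{A})$ on which $F$ restricts to an isomorphism onto a closed summand $M_{2}$ of the target: when $PF_{\mid R(P)}$ is bounded below this range is automatically closed and the Banach open mapping theorem together with a Lemma \ref{L07}-style upper-triangular block manipulation produces the diagonal form, while in the complementable-range case the splitting $R(P) = R(PF_{\mid R(P)}) \oplus W$ plays the same role. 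To extract $N_{2}^{\prime} \subseteq N_{2}$ with $N_{2}^{\prime} \cong N_{1}$ and $l_{2}(\mathcal{A}) = N \tilde\oplus N_{1} = N \tilde\oplus N_{2}^{\prime}$, I would mimic the finitely-generated direct-summand argument from the proof of Lemma \ref{L03}. The main obstacle will be producing these witnesses and verifying complementability of $R(PF_{\mid R(P)})$ in $R(P)$ without invoking an orthogonal-projection step: the adjointable proof in \cite{I2} uses \cite[Theorem 2.3.3]{MT} at exactly these points, and here I must substitute either the given bounded-below hypothesis or the complementability hypothesis of case (ii), combined with the finite generation of $\ker P$, to carry the construction through.
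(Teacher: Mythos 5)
Your overall skeleton matches the paper's: both directions are routed through the proof of \cite[Theorem 10]{S}, and in the easy direction you pick the same witness the paper does, namely the projection onto $N$ along $N_{2}^{\prime}$, whose kernel $N_{2}^{\prime}\cong N_{1}$ is finitely generated. The genuine problem is how you parse the defining condition of $\sigma_{a0}^{\mathcal{A}}(PF_{\mid R(P)})$. You read ``$\alpha\notin\sigma_{a0}^{\mathcal{A}}(PF_{\mid R(P)})$'' as the \emph{disjunction} ``$(PF-\alpha I)_{\mid R(P)}$ is bounded below \emph{or} $R(PF-\alpha P)$ is complementable in $R(P)$,'' and you build your reduction and the entire hard direction on that reading, treating the two conditions as alternative, individually sufficient cases. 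The paper's proof uses the \emph{conjunction}: if $\alpha\notin\sigma_{a0}^{\mathcal{A}}(PF_{\mid R(P)})$ for some $P$, then $(PF-\alpha I)_{\mid R(P)}$ is bounded below \emph{and} $R(PFP-\alpha P)$ is complementable in $R(P)$, and only with both in hand does it proceed as in \cite[Theorem 10]{S}. (The theorem statement is garbled by stray braces, but only the conjunctive reading makes it true.)

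Under your disjunctive reading the equivalence you reduce to is false: take $F=0$ and $P=I$; then $R(PF)=\lbrace 0\rbrace$ is trivially complementable in $R(P)=l_{2}(\mathcal{A})$, yet $0\notin\widehat{\widehat{\mathcal{M}\Phi}}_{+}^{-}(l_{2}(\mathcal{A}))$. Concretely, each of your two ``cases'' in the hard direction fails on its own. If you only know $PF_{\mid R(P)}$ is bounded below, its image is closed, but in the non-adjointable setting closedness does not yield complementability in $R(P)$ --- this is precisely the defect Definition \ref{D03} is designed to compensate for --- so you cannot produce the splitting $R(P)=R(PF_{\mid R(P)})\tilde{\oplus}W$ needed for the diagonal form. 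If you only know the range is complementable, you have no injectivity and hence no isomorphism $F_{1}$ onto $M_{2}$. Both hypotheses must be used simultaneously. The same misreading leaves your easy direction incomplete as well: to conclude $\alpha\notin\sigma_{a0}^{\mathcal{A}}(PF_{\mid R(P)})$ you must also verify that $P(F-\alpha I)_{\mid R(P)}$ is bounded below, which the paper obtains by showing $P(F-\alpha I)_{\mid N}$ is an isomorphism onto $\tilde{N}=P(V^{-1}(M_{2}))$, not merely that its range is complemented in $R(P)$.
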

\begin{proof}
	If $ \alpha \notin  \sigma_{a 0}^{ \mathcal{A} } (PF_{\mid_{ R(P) }})$ for some $P \in P (l_{2}(\mathcal{A}))   ,$ then $(PF-\alpha I)_{\mid_{R(P)}}  $ is bounded below and $R(PFP-\alpha P) $ is complementable in $R(P).$ Hence we may proceed as in the proof of the \cite[Theorem 10]{S}, to deduce that $F-\alpha I \in \widehat{\widehat{{\mathcal{M}\Phi}}}_{+}^{-}(l_{2}(\mathcal{A}))   .$ Conversely, if $\alpha \in Z(\mathcal{A}) \setminus { \sigma_{e \tilde{a} 0}^{ \mathcal{A} } (F)},$ then by the proof of \cite[Theorem 10]{S} we obtain a decomposition 
	$$l_{2}(\mathcal{A})=V^{-1}(M_{2}) \tilde{\oplus} N_{2} =V^{-1}(M_{2}) \tilde{\oplus} N_{2}^{\prime \prime }  \tilde{\oplus} N_{2}^{\prime }=N  \tilde{\oplus} N_{2}^{\prime }   $$ 
	and $N_{2}^{\prime } \cong N_{1},N_{2}= N_{2}^{\prime } \tilde{\oplus} N_{2}^{\prime \prime },$ $U,V$ are isomorphism, $N_{1} $ is finitely generated and $(F-\alpha I)_{{\mid}_{N}}  $ maps $N$ isomorphiscally onto $V^{-1}(M_{2})  .$ 	If we let, as in that proof, $P$ be the projection ont $N$ along $N_{2}^{\prime }  ,$ then $P_{\mid_{V^{-1}(M_{2}) \tilde{\oplus} N_{2}^{\prime  }}} $ is an isomorphism onto $N.$ Set $\tilde{N}=P(V^{-1}(M_{2})), \tilde{\tilde{N}}=P(N_{2}^{\prime })  .$ We have then $N=\tilde{N}=\tilde{\tilde{N}}  .$ Hence $P(F-\alpha I)_{{\mid}_{N}}  $ is an isomorphism onto $\tilde{N}  $ which is complementable in $N=R(P)  ,$ so $\alpha \notin  \sigma_{a 0}^{ \mathcal{A} } (PF_{\mid_{ R(P) }})  .$
\end{proof}
Remark: It can be shown that $\widehat{\widehat{{\mathcal{M}\Phi}}}_{+}^{-}(l_{2}(\mathcal{A})) $ is open. \\
\\
Set now $\widehat{\widehat{{\mathcal{M}\Phi}}}_{-}^{+}(l_{2}(\mathcal{A}))   $ to be the set as $\widehat{{\mathcal{M}\Phi}}_{-}^{+}( H_{\mathcal{A}}) $ in \cite{S}, only we do not demand the adjointability of the projection $P$ onto $M_{1}^{\prime}  \tilde{\oplus} N $ along $N_{2}^{\prime}  ,$ but we require that $R(P)$ splits into $R(P)=\tilde{\tilde{N}} \tilde{\oplus} \tilde{N}$ s.t. $PG_{\mid_{\tilde{N}}}$  is an isomorphism from $\tilde{N}  $ onto $R(P).$ Then we put 
$$\sigma_{e \tilde{d} 0}^{ \mathcal{A} } (G)= \lbrace \alpha \in Z(  \mathcal{A}  ) \mid (G- \alpha I) \notin \widehat{\widehat{{\mathcal{M}\Phi}}}_ {+}^{-}(l_{2}(\mathcal{A})) \rbrace $$
and reach to the following non adjointable analogue of \cite[Theorem 11]{S}. 
\begin{theorem} \label{T02}  
	For $G \in B(l_{2}(\mathcal{A})) \rbrace $ we have $$\sigma_{e \tilde{d} 0}^{ \mathcal{A} } (G)=\cap \lbrace \sigma_{d 0}^{ \mathcal{A} } (PG_{\mid_{ R(P) }}) \mid P \in P (l_{2}(\mathcal{A})) \rbrace   $$ where $\sigma_{d 0}^{ \mathcal{A} } (PG_{\mid_{ R(P) }})= \lbrace \alpha \in Z(  \mathcal{A}  ) \mid {R(P)} \rbrace  $ does not split into the decomposition $R(P)=\tilde{N}  \tilde{\oplus} \tilde{\tilde{N}}  $ where $PG_{\mid_{\tilde{N}}}  $ is an isomorphism onto $R(P)\rbrace .$ 
\end{theorem}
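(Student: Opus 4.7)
The plan is to mirror the proof of Theorem \ref{T01} above, replacing the class $\widehat{\widehat{{\mathcal{M}\Phi}}}_{+}^{-}$ by $\widehat{\widehat{{\mathcal{M}\Phi}}}_{-}^{+}$ and the ``bounded below / complementable range'' hypothesis by the ``splitting $R(P)=\tilde{N}\tilde{\oplus}\tilde{\tilde{N}}$ with $PG_{\mid_{\tilde{N}}}$ an isomorphism onto $R(P)$'' hypothesis. This reduces to adapting the proof of \cite[Theorem 11]{S} to the non-adjointable framework; as in Theorem \ref{T01}, the two inclusions are proved separately.

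For the inclusion $\supseteq$, fix $\alpha\in Z(\mathcal{A})$ with $\alpha\notin\sigma_{d0}^{\mathcal{A}}(PG_{\mid_{R(P)}})$ for some $P\in P(l_{2}(\mathcal{A}))$. By definition we obtain a splitting $R(P)=\tilde{N}\tilde{\oplus}\tilde{\tilde{N}}$ with $(PG-\alpha P)_{\mid_{\tilde{N}}}:\tilde{N}\to R(P)$ an isomorphism. Because $\ker P$ is finitely generated (by definition of $P(l_{2}(\mathcal{A}))$), it is an orthogonal summand of $l_{2}(\mathcal{A})$ by \cite[Lemma 2.3.7]{MT}, and so $l_{2}(\mathcal{A})=\tilde{N}\tilde{\oplus}\tilde{\tilde{N}}\tilde{\oplus}\ker P$. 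Following the steps of \cite[Theorem 11]{S} with this data, we construct decompositions of the domain and codomain of $G-\alpha I$ witnessing $G-\alpha I\in\widehat{\widehat{{\mathcal{M}\Phi}}}_{-}^{+}(l_{2}(\mathcal{A}))$, i.e.\ $\alpha\notin\sigma_{e\tilde{d}0}^{\mathcal{A}}(G)$.

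For the inclusion $\subseteq$, suppose $\alpha\notin\sigma_{e\tilde{d}0}^{\mathcal{A}}(G)$, so that $G-\alpha I\in\widehat{\widehat{{\mathcal{M}\Phi}}}_{-}^{+}(l_{2}(\mathcal{A}))$. Unravelling the definition of this class, we obtain a decomposition $l_{2}(\mathcal{A})=M_{1}'\tilde{\oplus}N_{1}'\to M_{2}'\tilde{\oplus}N_{2}'=l_{2}(\mathcal{A})$ with $(G-\alpha I)_{1}:M_{1}'\to M_{2}'$ an isomorphism, $N_{2}'$ finitely generated, and a closed submodule $N$ satisfying $l_{2}(\mathcal{A})=M_{1}'\tilde{\oplus}N\tilde{\oplus}N_{2}'$ such that the projection $P$ onto $M_{1}'\tilde{\oplus}N$ along $N_{2}'$ splits its range as $R(P)=\tilde{\tilde{N}}\tilde{\oplus}\tilde{N}$ with $P(G-\alpha I)_{\mid_{\tilde{N}}}$ an isomorphism onto $R(P)$. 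Since $\ker P=N_{2}'$ is finitely generated, $P\in P(l_{2}(\mathcal{A}))$, and directly by the defining property of $\sigma_{d0}^{\mathcal{A}}(PG_{\mid_{R(P)}})$ we have $\alpha\notin\sigma_{d0}^{\mathcal{A}}(PG_{\mid_{R(P)}})$ for this particular $P$.

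The main obstacle lies in the $\supseteq$ direction: in \cite[Theorem 11]{S} one used orthogonal complementability (afforded by the adjointability of $P$) to manufacture the required submodule $N$ and the required isomorphism $(G-\alpha I)_{\mid_{N}}$. Here that orthogonality must be replaced by the weaker purely algebraic splitting supplied by the definition of $\sigma_{d0}^{\mathcal{A}}(PG_{\mid_{R(P)}})$. One therefore has to verify that every step of the argument in which an adjoint or an orthogonal projection was invoked still goes through using only the direct-sum decomposition $R(P)=\tilde{N}\tilde{\oplus}\tilde{\tilde{N}}$; as in Theorem \ref{T01}, this is expected to work because the isomorphism $PG_{\mid_{\tilde{N}}}\to R(P)$ suffices to produce the algebraic complements required, and $\ker P$ is finitely generated and hence orthogonally complementable.
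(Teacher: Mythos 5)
Your proposal is correct and follows essentially the same route as the paper: both directions are obtained by adapting the proof of \cite[Theorem 11]{S}, with the hypothesised splitting $R(P)=\tilde{N}\tilde{\oplus}\tilde{\tilde{N}}$ standing in for the orthogonal complement of the kernel of the compression (the paper makes this explicit by letting $\tilde{\tilde{N}}$ play the role of $N(PD-\alpha I)$), and with the converse direction read off from the defining decomposition of $\widehat{\widehat{{\mathcal{M}\Phi}}}_{-}^{+}(l_{2}(\mathcal{A}))$ via the projection onto $M_{1}^{\prime}\tilde{\oplus}N$ along $N_{2}^{\prime}$.
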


\begin{proof}
	If $\alpha \notin  \sigma_{d 0}^{ \mathcal{A} } (PG_{\mid_{ R(P) }})  $ for some $ P \in P (l_{2}(\mathcal{A}))  ,$ then $R(P)=\tilde{N}  \tilde{\oplus} \tilde{\tilde{N}}  $ for some closd submodules $\tilde{N}, \tilde{\tilde{N}}  $ or $ R(P) $ s.t. $(PG-\alpha I)  $ is an isomorphism onto $R(P).$
	Letting $\tilde{\tilde{N}}  $ play the role of $N(PD-\alpha I)  $ in the proof of \cite[Theorem 11]{S}, we may proceed in the same way as in that proof to conclude that $G-\alpha I \in \widehat{\widehat{{\mathcal{M}\Phi}}}_{-}^{+}(l_{2}(\mathcal{A}))  .$ On the other hand, if $\alpha \in Z(\mathcal{A}) \setminus \sigma_{e \tilde{d} 0}^{ \mathcal{A} } (G) ,$ then $ G-\alpha I \in \widehat{\widehat{{\mathcal{M}\Phi}}}_{-}^{+}(l_{2}(\mathcal{A}))  .$ As in the proof of \cite[Theorem 11]{S} (and using the same notation) we may consider the projection $P$ onto $M_{1}^{\prime} \tilde{\oplus} N $ along $N_{2}^{\prime}  $ and obtain that $P(G- \alpha I)_{\mid_{M_{1}^{\prime}}}  $ is an isomorphism onto $M_{1}^{\prime}  \tilde{\oplus} V .$ 
\end{proof}
\begin{remark}
	Similarly as for $\widehat{\widehat{{\mathcal{M}\Phi}}}_{+}^{-}(l_{2}(\mathcal{A}))  ,$ one can show that $\widehat{\widehat{{\mathcal{M}\Phi}}}_{-}^{+}(l_{2}(\mathcal{A}))  $ is open. 
\end{remark}
\section{On semi-$\mathcal{A}$-$B$-Fredholm operators  }
\begin{lemma} \label{L08}  
	Let $F \in B^{a}(M) $ where $M$ is a Hilbert $C^{*}$-module and suppose that $Im F$ is closed. Then\\
	a) $F \in \mathcal{M}\Phi_{+}(M)   ,$ iff $\ker F$ is finitely generated.\\
	b) $F \in \mathcal{M}\Phi_{-}(M)  ,$ iff $Im F^{\perp}  $ is finitely generated.
\end{lemma}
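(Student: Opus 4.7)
The plan is to prove both parts symmetrically, exploiting the fact that adjointability of $F$ together with closedness of $\operatorname{Im} F$ makes $\ker F$, $\ker F^{\perp}$, $\operatorname{Im} F$ and $\operatorname{Im} F^{\perp}$ all orthogonally complementable in $M$, so that
$$M = \ker F \oplus \ker F^{\perp} \stackrel{F}{\longrightarrow} \operatorname{Im} F \oplus \operatorname{Im} F^{\perp} = M$$
with $F|_{\ker F^{\perp}}$ an isomorphism (by the Banach open mapping theorem). The $(\Leftarrow)$ direction of both (a) and (b) is then immediate: taking $M_{1} := \ker F^{\perp}$, $N_{1} := \ker F$, $M_{2} := \operatorname{Im} F$, $N_{2} := \operatorname{Im} F^{\perp}$ in Definition \ref{D05} already exhibits $F$ with the required block-diagonal form, so finite generation of $\ker F$ places $F$ in $\mathcal{M}\Phi_{+}(M)$, and finite generation of $\operatorname{Im} F^{\perp}$ places $F$ in $\mathcal{M}\Phi_{-}(M)$.

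For $(\Rightarrow)$ in part (a), I would start from a given $\mathcal{M}\Phi_{+}$ decomposition $M = M_{1} \tilde{\oplus} N_{1} \to M_{2} \tilde{\oplus} N_{2} = M$ and first verify the (standard) inclusion $\ker F \subseteq N_{1}$: for $x_{1}+x_{2} \in M_{1} \oplus N_{1}$ annihilated by $F$, the equation $F_{1}x_{1} + F_{4}x_{2} = 0$ combined with $M_{2} \cap N_{2} = 0$ forces $F_{1}x_{1} = 0$, hence $x_{1} = 0$ since $F_{1}$ is an isomorphism. For $x \in N_{1}$, the orthogonal decomposition $x = P_{\ker F}(x) + (x - P_{\ker F}(x))$ has its first summand in $\ker F \subseteq N_{1}$ and consequently its second summand in $N_{1} \cap \ker F^{\perp}$, giving $N_{1} = \ker F \oplus (N_{1} \cap \ker F^{\perp})$ orthogonally. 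Thus $P_{\ker F}|_{N_{1}}$ is a bounded $\mathcal{A}$-linear surjection from $N_{1}$ onto $\ker F$, and by the spanning-set definition of finite generation from \cite[Definition 1.4.1]{MT} the image $\ker F$ is itself finitely generated.

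For $(\Rightarrow)$ in part (b), I would dualize: starting from an $\mathcal{M}\Phi_{-}$ decomposition, $M_{2} = F_{1}(M_{1}) \subseteq \operatorname{Im} F$ and hence $M_{2} \perp \operatorname{Im} F^{\perp}$. For any $y \in \operatorname{Im} F^{\perp}$, writing $y = m + n$ with $m \in M_{2}$, $n \in N_{2}$ via $M = M_{2} \tilde{\oplus} N_{2}$ and applying $P_{\operatorname{Im} F^{\perp}}$ (using $P_{\operatorname{Im} F^{\perp}}(m) = 0$ and $P_{\operatorname{Im} F^{\perp}}(y) = y$) yields $P_{\operatorname{Im} F^{\perp}}(n) = y$. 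Hence $P_{\operatorname{Im} F^{\perp}}|_{N_{2}}$ is a bounded $\mathcal{A}$-linear surjection from the finitely generated module $N_{2}$ onto $\operatorname{Im} F^{\perp}$, so the latter is finitely generated as well. The only real obstacle in the argument is the bookkeeping between the not-necessarily-orthogonal direct sums of Definition \ref{D05} and the orthogonal complements supplied by adjointability; once those are properly aligned, the transfer of finite generation along a surjective bounded $\mathcal{A}$-linear map is automatic.
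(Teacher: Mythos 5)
Your proof is correct. Part (a) follows essentially the same route as the paper: both directions hinge on the inclusion $\ker F \subseteq N_{1}$ together with the orthogonal complementability of $\ker F$ supplied by \cite[Theorem 2.3.3]{MT} (since $F$ is adjointable with closed range), which exhibits $\ker F$ as a direct summand of, equivalently the image under a bounded projection of, the finitely generated module $N_{1}$; the converse via the canonical decomposition $\ker F^{\perp}\oplus\ker F \stackrel{F}{\longrightarrow}\operatorname{Im}F\oplus\operatorname{Im}F^{\perp}$ is identical to the paper's. Part (b) is where you genuinely diverge: the paper passes to adjoints, invoking the equivalence $F\in\mathcal{M}\Phi_{-}(M)\Leftrightarrow F^{*}\in\mathcal{M}\Phi_{+}(M)$ from \cite[Corollary 2.11]{I}, the fact that $\operatorname{Im}F^{*}$ is closed when $\operatorname{Im}F$ is, and the identity $\operatorname{Im}F^{\perp}=\ker F^{*}$, and then applies part (a) to $F^{*}$; you instead argue directly that $M_{2}=F_{1}(M_{1})\subseteq\operatorname{Im}F$ forces $P_{\operatorname{Im}F^{\perp}}$ to map $N_{2}$ onto $\operatorname{Im}F^{\perp}$, so finite generation transfers along that surjection. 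Your version is more self-contained (it needs no duality result and no separate argument about $\operatorname{Im}F^{*}$) and applies verbatim to a general Hilbert $C^{*}$-module $M$ without worrying whether the cited corollary was stated only for $H_{\mathcal{A}}$; the paper's version is shorter once that corollary is granted. Both arguments are sound.
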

\begin{proof}
	a) Let $M=M_{1} \tilde \oplus M_{2}‎‎\stackrel{F}{\longrightarrow} M_{1}^{\perp} \tilde \oplus M_{2}^{\perp}=M   $ be an $\mathcal{M}\Phi_{+}  $ decomposition for $F.$ By the arguments from the proof of \cite[Proposition 3.6.8]{MT}, it is not hard to see that $\ker F \subseteq M_{2}  .$ Now, by \cite[Theorem 2.3.3]{MT}, $ \ker F$ is orthogonally complementable in $M,$ hence in $ M_{2} ,$ as $ \ker F \subseteq M_{2} .$ Since $M_{2}  $ is finitely, it follows that $\ker F$ is finitely generated, being a direct summand in $M_{2}.$ 
	Conversely, if $\ker F$ is finitely generated, then 
	$$ H_{\mathcal{A}}=\ker F^{\perp} \oplus \ker F \stackrel{F}{\longrightarrow} Im F \oplus Im F^{\perp}=H_{\mathcal{A}}$$
	is an $\mathcal{M}\Phi_{+}$ dcomposition for F. (Here we use that $Im F$ is closed.).\\
	b) This can be shown by passing to the adjoints and using a). Use that $Im F^{*}$ is closed if and only if $Im F$ is closed by the proof of \cite[Theorem 2.3.3]{MT} part ii). Moreover, $F \in \mathcal{M}\Phi_{-}(M) $ iff $ F^{*} \in \mathcal{M}\Phi_{+}(M) $ by \cite[Corollery 2.11]{I} and $Im F^{\perp}=\ker F^{*}.$
\end{proof}
\begin{definition} \label{D04}  
	Let $F \in B^{a}(H_{\mathcal{A}})  .$ Then $F$ is said to be upper semi-$\mathcal{A}$-$B$-Fredhom if the following holds:
	1)	$Im F^{m} $ is closed for all $ m $
	2)	There exists an  $n$ s.t. $F_{\mid_{Im F^{n}}}  $ upper semi -$\mathcal{A}$- Fredholm. 
\end{definition}
Similarly, $F$ is said to be lower semi-$\mathcal{A}$-$B$-Fredholm if 1) and 2) hold, only in this case we assume in 2) that $F_{\mid_{Im F^{n}}}  $ is lower semi-Fredholm. Finally, if $F_{\mid_{Im F^{n}}} $ is $\mathcal{A}$-Fredholm, we say that $F$ is $\mathcal{A}$-$B$-Fredholm.
\begin{proposition}  \label{P04}  
	If $F$ is upper semi-$\mathcal{A}$-$B$-Fredholm (respectively lower semi-$\mathcal{A}$-$B$-Fredholm), then $F_{\mid_{Im F^{m}}}  $ is upper semi-$\mathcal{A}$-Fredholm (respectively lower semi-$\mathcal{A}$-Fredholm) for all $ m \geq n .$ Moreover, if $F$ is $\mathcal{A}$-$B$-Fredholm and $Im F^{n}\cong H_{\mathcal{A}}  ,$ then $Im F^{m}\cong H_{\mathcal{A}} ,$ for all $ m \geq n ,$ $F_{\mid_{Im F^{m}}}  $ is $\mathcal{A}  $ Fredholm for all $ m \geq n $ and index $F_{\mid_{Im F^{m}}}= \index{F_{\mid_{Im F^{n}}}}  $ for all $ m \geq n .$
\end{proposition}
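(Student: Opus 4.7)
The plan is to argue by induction on $m \ge n$, reducing both the upper and lower semi-$\mathcal{A}$-$B$-Fredholm assertions to Lemma~\ref{L08}. The base case $m=n$ is the hypothesis. For the inductive step, $F_{\mid_{Im F^{m+1}}}$ has closed range $Im F^{m+2}$ by Definition~\ref{D04} and is adjointable on the Hilbert module $Im F^{m+1}$, since $Im F^{m+1}$ is orthogonally complementable in $H_{\mathcal{A}}$ by \cite[Theorem 2.3.3]{MT}. Hence by Lemma~\ref{L08}(a), in the upper case it suffices to show that $\ker F \cap Im F^{m+1}$ is finitely generated, knowing (by the inductive hypothesis together with the same lemma) that $\ker F \cap Im F^m$ is.

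The key observation is that both $\ker F \cap Im F^m$ and $\ker F \cap Im F^{m+1}$ are orthogonally complementable in $H_{\mathcal{A}}$, being kernels of adjointable operators with closed range. Writing $A = \ker F \cap Im F^m \supseteq B = \ker F \cap Im F^{m+1}$ and letting $P$ be the orthogonal projection onto $B$ in $H_{\mathcal{A}}$, each $a \in A$ decomposes as $a = P(a) + (a - P(a))$ with $P(a) \in B \subseteq A$ and $a - P(a) \in A \cap B^{\perp}$. Hence $A = B \oplus (A \cap B^{\perp})$ is an orthogonal direct sum, so $B$ is an orthogonal direct summand of the finitely generated module $A$, and applying $P$ to a finite generating set of $A$ produces one for $B$. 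The lower semi case is handled dually via Lemma~\ref{L08}(b): applying $F$ to $Im F^m = Im F^{m+1} \oplus (Im F^{m+1})^{\perp}_{Im F^m}$ gives $Im F^{m+1} = Im F^{m+2} + F((Im F^{m+1})^{\perp}_{Im F^m})$, so that composing $F$ with the orthogonal projection onto $(Im F^{m+2})^{\perp}_{Im F^{m+1}}$ yields a surjection from the finitely generated module $(Im F^{m+1})^{\perp}_{Im F^m}$ onto the closed complement, making the latter finitely generated.

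For the $\mathcal{A}$-$B$-Fredholm statement with $Im F^n \cong H_{\mathcal{A}}$, the first part already provides that $F_{\mid_{Im F^m}}$ is $\mathcal{A}$-Fredholm for every $m \ge n$, and the isomorphisms $Im F^m \cong H_{\mathcal{A}}$ follow by induction using Kasparov's stabilization theorem together with the finite generation of $(Im F^{m+1})^{\perp}_{Im F^m}$. For the index invariance, the plan is to apply the snake lemma to the pair of short exact sequences $0 \to \ker F_{\mid_{Im F^k}} \to Im F^k \to Im F^{k+1} \to 0$ for $k = m$ and $k = m+1$, with the natural inclusions as vertical maps, obtaining
\begin{equation*}
0 \to \ker F_{\mid_{Im F^m}}/\ker F_{\mid_{Im F^{m+1}}} \to Im F^m/Im F^{m+1} \to Im F^{m+1}/Im F^{m+2} \to 0.
\end{equation*}
Passing to $K(\mathcal{A})$ rearranges to $[\ker F_{\mid_{Im F^m}}] - [Im F^m/Im F^{m+1}] = [\ker F_{\mid_{Im F^{m+1}}}] - [Im F^{m+1}/Im F^{m+2}]$, which is exactly the invariance of the index (after identifying the index of $F_{\mid_{Im F^k}}$ with $[\ker F_{\mid_{Im F^k}}] - [(Im F^{k+1})^{\perp}_{Im F^k}]$ in $K(\mathcal{A})$). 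The main obstacle will be ensuring that the snake lemma produces well-defined morphisms of Hilbert $\mathcal{A}$-modules, which is resolved by identifying each quotient with its corresponding orthogonal complement, legitimate thanks to the orthogonal complementability established in the first half.
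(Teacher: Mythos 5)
Your proposal is correct and follows the same inductive skeleton as the paper (reduce everything to Lemma~\ref{L08} via orthogonal complementability from \cite[Theorem 2.3.3]{MT}; for the upper case show $\ker F\cap Im F^{m+1}$ is an orthogonal direct summand of the finitely generated module $\ker F\cap Im F^{m}$ -- this part is essentially identical to the paper's argument), but you diverge in two places. For the lower case the paper invokes the machinery of the proof of Proposition~\ref{P01} (applied with $F$ and $D$ both equal to $F_{\mid_{Im F^{m}}}$) to produce an isomorphism $R\cong S(X)\tilde\oplus M$ between the relevant orthogonal complements, whereas you simply observe that $F$ followed by the projection onto $(Im F^{m+2})^{\perp}_{Im F^{m+1}}$ maps the finitely generated module $(Im F^{m+1})^{\perp}_{Im F^{m}}$ onto that complement; since the algebraic $\mathcal{A}$-span of the images of a generating set is the image of the span, this is a cleaner and entirely adequate argument. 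For the index you derive the identity $[R']=[M']+[X']$ from a snake-lemma exact sequence of quotients (which splits because the quotients are finitely generated projective, so the $K(\mathcal{A})$ bookkeeping is legitimate), while the paper obtains the same identity from the explicit isomorphism $R'\cong X'\oplus M'$ coming out of Proposition~\ref{P01}; both yield $index\, F_{\mid_{Im F^{m+1}}}=[\ker F\cap Im F^{m+1}]-[X']=[\ker F\cap Im F^{m}]-[R']=index\, F_{\mid_{Im F^{m}}}$. One small correction: to pass from $Im F^{m}\cong H_{\mathcal{A}}$ and $Im F^{m}=Im F^{m+1}\oplus R'$ with $R'$ finitely generated to $Im F^{m+1}\cong H_{\mathcal{A}}$, the right tool is the Dupr\'{e}--Fillmore theorem (as the paper uses), not Kasparov stabilization alone, which only gives $Im F^{m+1}\oplus H_{\mathcal{A}}\cong H_{\mathcal{A}}$.
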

\begin{proof}
	We will prove this by induction. Since $ Im F^{n+1}=Im F_{\mid_{Im F^{n}}} $ and $Im F^{n+1}  $ is closed by assumption, by \cite[Theorem 2.3.3]{MT} applied to the operator $F_{\mid_{Im F^{n}}}  ,$ we deduce that $ \ker F_{\mid_{Im F^{n}}} $ and $ Im F^{n+1}$ are orthogonally complementable in $ Im F^{n} .$ Nanely, by \cite[Theorem 2.3.3]{MT} applied to $ F^{n}  $ we have that $ Im F^{n}  $ is orthogonally complementable in $ H_{\mathcal{A}}  ,$ as $ Im F^{n}$ is closed.
	Hence $F_{\mid_{Im F^{n}}} \in B^{a}(Im F^{n} )   $ so we can indeed apply \cite[Theorem 2.3.3]{MT} on $F_{\mid_{Im F^{n}}}   .$ If $F_{\mid_{Im F^{n}}}  $ is upper semi-$\mathcal{A}$-Fredholm operator, by Lemma \ref{L08} we have that $ \ker F_{\mid_{Im F^{n}}} = \ker F \cap Im F^{n}  $ is finitely generated, as $Im F_{\mid_{Im F^{n}}} $ is closed. If $F_{\mid_{Im F^{n}}}  $ is lower semi-$\mathcal{A}$-Fredholm, then again  by Lemma \ref{L08}, if we let $R$ denote the orthogonal complement of $Im F^{n-1}  $ in $ Im F^{n}  $, we get that $R$ is finitely generated. Consider now the operator $F_{\mid_{  Im F^{n+1}  }}.$ Again, $ Im (F_{\mid_{  Im F^{n +1 }  }})= Im F^{n +2 }  $ is closed by assumption, so by the same arguments as above we may apply \cite[Theorem 2.3.3]{MT} on $ F_{\mid_{  Im F^{n+1  }  }}   $ to deduce that $\ker  F_{\mid_{  Im F^{n +1 }  }}=\ker F \cap Im F^{n +1 }    $ is orthogonally complementable in $ Im F^{n+1  } .$ Since $ Im F^{n+1  }  $ is orthogonally complementable in $H_{\mathcal{A}}  ,$ so is $\ker F \cap Im F^{n +1 }  $ as well. Now, since we have $ \ker F \cap Im F^{n +1 }  \cap  Im F^{n  } ,$ it follows that $\ker F \cap Im F^{n +1 } \oplus M= \ker F \cap  Im F^{n  }   ,$ where $ M=(\ker F \cap Im F^{n }) \cap ((\ker F \cap Im F^{n +1 })^{\perp})  .$ Since $\ker F \cap Im F^{n }  ,$ when $ F_{\mid_{  Im F^{n  }  }}  $ is upper semi-$\mathcal{A}$-Fredholm is finitely generated, it follows that $\ker F \cap Im F^{n +1 }  $ is finitely generated being a direct summand in $\ker F \cap Im F^{n }  .$ Thus  by Lemma \ref{L08} $ F_{\mid_{  Im F^{n +1 }  }}   $ is upper semi-$\mathcal{A}$-Fredholm, when $F_{\mid_{  Im F^{n  }  }}  $ is so. Next, again by the same arguments as earlier we get that $ Im F^{n+2 } \oplus X= Im F^{n + 1  }  $ for some closed submodule $X$ (using that $Im(F_{\mid_{  Im F^{n +1 }  }}) = Im F^{n+2 } $ is closed). By the proof of Proposition \ref{P01}, replacing by $F$ and $D$ by $F_{\mid_{  Im F^{n  }  }}  $ we obtain that $R \cong S(X) \tilde{\oplus} M $ where $S$ is an isomorphism. (recall that $ Im F^{n +1 } \oplus R =  Im F^{n  } )$ If $F_{\mid_{  Im F^{n  }  }}   $ is lower semi-$\mathcal{A}$-Fredholm, then $R$ is finitely generated, as we have seen. Hence $X$ must be finitely generated also. Thus $F_{\mid_{  Im F^{n  }  }}  $ is lower semi-$\mathcal{A}$-Fredholm in this case by Lemma \ref{L08}. Finally, if $ F_{\mid_{  Im F^{n  }  }}  $ is $\mathcal{A}$-Fredholm, then by Lemma \ref{L08} both $\ker F_{\mid_{  Im F^{n  }  }} = \ker F \cap  Im F^{n  }  $ and the orthogonal complement of $Im F^{n +1 }  $ in $ Im F^{n  } $ are finitely generated. Thus $ Im F^{n  }= Im F^{n+1  } \oplus R^{\prime} $ for some finitely generated closed submodule $R^{\prime}$. Hence, if $H_{\mathcal{A}} \cong Im F^{n  }  ,$ by Dupre-Filmore theorem $ Im F^{n +1 }  \cong  H_{\mathcal{A}} $ as well. By the same arguments as above we can deduce that both $ \ker F_{\mid_{  Im F^{n + 1 }  }}   $ and the orthogonal complement of $Im F^{n +2 }   $ in $Im F^{n +1 }  $ are finitely generated, as both $\ker F_{\mid_{  Im F^{n  }  }}    $ and $ R^{\prime} $ are so. Hence $F_{\mid_{  Im F^{n+1  }  }}   $ is $\mathcal{A}$-Fredholm and since $  Im F^{n +1 } \cong  H_{\mathcal{A}}  ,$ by \cite[Theorem 2.7.9]{MT} the index of $ F_{\mid_{  Im F^{n +1 }  }} $ is well-defined. If we let $X^{\prime}  $ denote the orthogonal complement of $Im F^{n +2 }   $ in $ Im F^{n+1  }  $ and $M^{\prime}  $ denote the orthogonal complement of $\ker F \cap Im F^{n+1  }    $ in $\ker F \cap Im F^{n  }  ,$ by the same arguments as earlier we get that $ R^{\prime} \cong X^{\prime} \oplus M^{\prime}.$ Hence we get  $ index{ F_{\mid_{  Im F^{n +1 }  }} } = [\ker F \cap Im F^{n+1}]-[X^{\prime}]=[\ker F \cap Im F^{n +1 } ] +[M^{\prime}]-[X^{\prime}]-[M^{\prime}] =[\ker F \cap Im F^{n}] - [R^{\prime}]= index{ F_{\mid_{  Im F^{n }  }} }.$
\end{proof}
For an $\mathcal{A}$-$B$-Fredholm operator $F$, we set $index F= index{ F_{\mid_{  Im F^{n }  }} },$ where $n$ is as in the Definition \ref{D04} above. 
\begin{lemma} \label{L06}  
	Let $F \in  {\mathcal{M}\Phi}(H_{\mathcal{A}})  ,$ let $P \in B (H_{\mathcal{A}}) $ s.t. $P$ is the projection and $N(P) $ is finitely generated. Then $PF_{\mid_{R(P)}} \in {\mathcal{M}\Phi} (R(P)) $ and $index PF_{\mid_{R(P)}}=index F .  $
\end{lemma}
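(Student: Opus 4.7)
The plan is to replace $F$ by the block-diagonal operator $\tilde{F} := PFP + (I-P)$, which, relative to the topological direct-sum decomposition $H_{\mathcal{A}} = R(P) \tilde{\oplus} N(P)$, acts as $PF_{\mid_{R(P)}}$ on $R(P)$ and as the identity on $N(P)$.

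First I would show that $F - \tilde{F}$ is $\mathcal{A}$-compact. A direct computation gives $(F - \tilde{F})x = (I-P)Fx$ for $x \in R(P)$ and $(F - \tilde{F})y = Fy - y$ for $y \in N(P)$, so the range of $F - \tilde{F}$ is contained in the submodule $N(P) + F(N(P))$. Since $N(P)$ is finitely generated and module homomorphisms preserve finite generation, this submodule is finitely generated; by \cite[Lemma 2.3.7]{MT} it is therefore closed and orthogonally complementable. Hence $F - \tilde{F}$ is $\mathcal{A}$-finite rank and thus $\mathcal{A}$-compact in the Mishchenko--Fomenko sense, and the usual perturbation stability yields $\tilde{F} \in \mathcal{M}\Phi(H_{\mathcal{A}})$ with $\textrm{index}\,\tilde{F} = \textrm{index}\,F$.

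Next I would transfer the $\mathcal{M}\Phi$-structure from $\tilde{F}$ to its $(1,1)$-block $A := PF_{\mid_{R(P)}}$. From an $\mathcal{M}\Phi$ decomposition $R(P) = M_1 \tilde{\oplus} N_1 \stackrel{A}{\longrightarrow} M_2 \tilde{\oplus} N_2 = R(P)$ of $A$ one builds the $\mathcal{M}\Phi$ decomposition
$$M_1 \tilde{\oplus} (N_1 \tilde{\oplus} N(P)) \stackrel{\tilde{F}}{\longrightarrow} M_2 \tilde{\oplus} (N_2 \tilde{\oplus} N(P))$$
of $\tilde{F}$, with the same isomorphism block $A_1 \colon M_1 \to M_2$; conversely, given an $\mathcal{M}\Phi$ decomposition of $\tilde{F}$, one peels off the already-diagonal $I_{N(P)}$ summand to recover one for $A$, using cancellation of the finitely generated summand $N(P)$. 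This shows $A \in \mathcal{M}\Phi(R(P))$, and comparing indices gives
$$\textrm{index}\,\tilde{F} = [N_1 \tilde{\oplus} N(P)] - [N_2 \tilde{\oplus} N(P)] = [N_1] - [N_2] = \textrm{index}\,A,$$
so $\textrm{index}(PF_{\mid_{R(P)}}) = \textrm{index}\,F$ as required.

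The main obstacle is the reverse extraction, splitting an arbitrary $\mathcal{M}\Phi$ decomposition of $\tilde{F}$ along the $N(P)$ summand: one must exploit that $I_{N(P)}$ is an $\mathcal{A}$-Fredholm isomorphism of index zero on a finitely generated module together with absorption/cancellation results (Dupr\'e--Fillmore, Kasparov absorption) to separate the $N(P)$-part cleanly from the $R(P)$-part of the decomposition. A secondary technical point is that $P$ is not assumed adjointable, so $R(P)$ is only a topological direct summand; this is compatible with the $\tilde{\oplus}$-formalism used throughout the paper, but must be kept in mind when invoking results phrased for orthogonal decompositions.
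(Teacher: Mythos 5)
Your route is genuinely different from the paper's. The paper obtains membership $PF_{\mid_{R(P)}}\in\mathcal{M}\Phi(R(P))$ by citing \cite[Lemma 1]{S}, and gets the index equality by applying the multiplicativity of the index (\cite[Lemma 2.7.11]{MT}) to the product $PFP$, so that $\mathrm{index}\,PFP=\mathrm{index}\,F$ because $\mathrm{index}\,P=0$, and then matching the finitely generated modules in a decomposition of $PFP$ against those of $PF_{\mid_{R(P)}}$; the two differ exactly by an $N(P)$ summand on each side, which cancels in $K(\mathcal{A})$. You instead pass to the block-diagonal $\tilde F=PFP+(I-P)$ and use compact-perturbation stability (\cite[Lemma 2.7.13]{MT}), after which your bookkeeping with the identity block on $N(P)$ is essentially the same cancellation of $[N(P)]$. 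Your version cleanly separates the index computation from the structure of the decompositions, which is a genuine advantage.

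Two points need attention, one of which is a real gap. First, the claim that $F-\tilde F$ is $\mathcal{A}$-compact because its range lies in the finitely generated submodule $N(P)+F(N(P))$ requires $F-\tilde F$ to be adjointable: the factorization $F-\tilde F=P_{L}(F-\tilde F)$ through the orthogonal projection onto a finitely generated summand $L$ only lands in $K(H_{\mathcal{A}})$ when the second factor is adjointable, and a non-adjointable operator can never be a compact perturbation of the adjointable $F$. Since the lemma assumes only $P\in B(H_{\mathcal{A}})$, your argument collapses unless you first justify that $P$ may be taken adjointable (the paper's proof also leans on the adjointable setting, but less critically, since it never perturbs $F$). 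Second, the reverse extraction you flag as the main obstacle is exactly \cite[Lemma 1]{S}, which the paper simply quotes; you should either cite it or argue via the Atkinson-type characterization, compressing an inverse of $\tilde F$ modulo compacts to the corner $R(P)\cong H_{\mathcal{A}}$. Note that once membership is granted, your forward construction of a decomposition of $\tilde F$ from one of $PF_{\mid_{R(P)}}$ already gives $\mathrm{index}\,\tilde F=\mathrm{index}\,PF_{\mid_{R(P)}}$, so the delicate cancellation direction is not actually needed for the index identity.
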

\begin{proof}
	From \cite[Lemma 1]{S}, we already know that $PF_{\mid_{R(P)}} \in {\mathcal{M}\Phi} (R(P)).$ If remains to show that $index PF_{\mid_{R(P)}}=index F  .$ Now, since $ P \in  {\mathcal{M}\Phi}(H_{\mathcal{A}})  ,$ by \cite[Lemma 2.7.11]{MT}, $index PFP=index P+ index F+ index P= index F,$ as  $index P=0  .$ By the proof of \cite[Lemma 1]{S}, there „exists decompositia $R(P)=P(M) \oplus \tilde{N} \stackrel{PF}{\longrightarrow} M^{\prime} \oplus \tilde{N}^{\prime}=R(P)  $ w.r.t. which $PF$ has the matrix 
	$\left\lbrack
	\begin{array}{ll}
	(PF)_{1} & (PF)_{2} \\
	0 & (PF)_{4} \\
	\end{array}
	\right \rbrack,
	$  
	where $(PF)_{1}  $ is an isomorphism, $\tilde{N}, \tilde{N}^{\prime}  $ are finitely generated. In addition $ P $ has the matrix
	$\left\lbrack
	\begin{array}{ll}
	P_{1} & P_{2} \\
	0 & P_{4} \\
	\end{array}
	\right \rbrack,
	$  
	w.r.t. the decomposition
	$$H_{\mathcal{A}}=M \tilde{\oplus} N \longrightarrow P(M) \tilde{\oplus} (\tilde{N} \oplus N(P))=H_{\mathcal{A}}  $$ 
	where $ P_{1} $ is an isomorphism and $N$ is finitely generated. Moreover, 
	$$H_{\mathcal{A}}=M \tilde{\oplus} N \stackrel{PFP}{\longrightarrow} M^{\prime} \tilde{\oplus} N^{\prime}= H_{\mathcal{A}} $$ 
	is an ${\mathcal{M}\Phi}_{-}$ decomposition for $PFP$ and $N^{\prime} \cong \tilde{N}^{\prime} \oplus N(P)  .$ Since $index PFP= index F  ,$ it follows that $[N]-[N^{\prime}] =index F $ in $K(\mathcal{A}).$ Next, it is easily seen, by diagonalizing the matrix
	$\left\lbrack
	\begin{array}{ll}
	P_{1} & P_{2} \\
	0 & P_{4} \\
	\end{array}
	\right \rbrack,
	$  
	as in the proof of \cite[Lemma 2.7.10]{MT} that $[N]-[\tilde{N}]-[N(P)]=[N]-[\tilde{N} \oplus N(P) ]= index P = 0  .$ Similarly, by diagonalizing the matrix
	$\left\lbrack
	\begin{array}{ll}
	(PF)_{1} & (PF)_{2} \\
	0 & (PF)_{4} \\
	\end{array}
	\right \rbrack,
	$  
	we obtain that $index (PF_{\mid_{R(P)}})=[\tilde{N}]-[\tilde{N}^{\prime}] .$ Finaly, $ [\tilde{N}^{\prime}] + [N(P)]= [N^{\prime}]  .$ Combining all this together, we obtain $index (PF_{\mid_{R(P)}})= [\tilde{N}]-  [\tilde{N}^{\prime}]= [\tilde{N}]+ [N(P)]-  [\tilde{N}^{\prime}]- [N(P)]=[\tilde{N} \oplus N(P)] -[\tilde{N}^{\prime} \oplus N(P) ] = [N] - [N^{\prime}] = index F.$
\end{proof}
\begin{theorem} \label{T03} 
	Let $T$ be an $\mathcal{A}$-$B$-Fredholm operator on $H_{\mathcal{A}}  ,$ and suppose that mis such that $T_{\mid_{ Im T^{m}}}  $ is $\mathcal{A}$-Fredholm and $Im T^{n}  $ is closed for all $ n \geq m .$ Let $F$ be a finite rank operator (that is $Im F$ is finitely generated) and suppose that $ Im (T+F)^{n}  $ is closed for all $n \geq m   .$ Finally assume that $ Im T^{m} \cong H_{\mathcal{A}}  $ and that $Im (\tilde{F}),  T^{m} (\ker \tilde{F}),T^{m} (\ker \tilde{F}^{\perp}),(T+F)^{m} (\ker \tilde{F}^{\perp})  $ are closed, where $ \tilde{F}=(T+F)^{m}-T^{m} .$ Then $T+F$ is an $\mathcal{A}$-$B$-Fredholm operator and $index T+F=index T.$
\end{theorem}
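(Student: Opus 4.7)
The strategy is to reduce the index identity to a finite-rank perturbation statement for the compressions of $T$ and $T+F$ onto $Im(T+F)^{m}$, with Lemma~\ref{L06} as the transfer device. I first expand
$$(T+F)^{m} = T^{m} + \tilde{F},$$
where $\tilde{F}$ is the sum of all mixed products of $T$ and $F$ that contain at least one factor of $F$. Since $F$ has finitely generated image and any adjointable operator sends a finitely generated submodule into a finitely generated one, $Im\tilde{F}$ is finitely generated. Together with the assumption that $Im\tilde{F}$ is closed, \cite[Theorem 2.3.3]{MT} yields
$$H_{\mathcal{A}} = \ker\tilde{F} \oplus \ker\tilde{F}^{\perp}, \qquad \ker\tilde{F}^{\perp} \cong Im\tilde{F} \text{ finitely generated}.$$
On $\ker\tilde{F}$ the two powers agree, $T^{m}=(T+F)^{m}$, so $Im T^{m}$ and $Im(T+F)^{m}$ share the common closed submodule $T^{m}(\ker\tilde{F})$ and differ only by the images of the finitely generated piece $\ker\tilde{F}^{\perp}$, both of which are closed by hypothesis.

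Using $Im T^{m} \cong H_{\mathcal{A}}$, I would then build a decomposition of $H_{\mathcal{A}}$ in which $Im(T+F)^{m}$ appears as a direct summand differing from $Im T^{m}$ only by finitely generated complementable pieces. Invoking Lemma~\ref{L06}, choose a projection $P \in B^{a}(H_{\mathcal{A}})$ with finitely generated kernel whose range is $Im(T+F)^{m}$; the compression $PT|_{R(P)}$ is then $\mathcal{A}$-Fredholm with $\operatorname{index} PT|_{R(P)} = \operatorname{index} T = \operatorname{index} T|_{Im T^{m}}$. Since $F$ has finitely generated image, $PF|_{R(P)}$ is finite rank, so by the finite rank stability of adjointable $\mathcal{A}$-Fredholm operators (obtained by the usual diagonalisation as in the proof of \cite[Lemma 2.7.10]{MT}), $P(T+F)|_{R(P)}$ is $\mathcal{A}$-Fredholm of the same index. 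Identifying $P(T+F)|_{R(P)}$ with $(T+F)|_{Im(T+F)^{m}}$ up to a further finitely generated modification controlled by $N(P)$, I obtain
$$\operatorname{index}(T+F)|_{Im(T+F)^{m}} = \operatorname{index} T|_{Im T^{m}} = \operatorname{index} T.$$

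Combined with the closedness of $Im(T+F)^{n}$ for $n \geq m$ assumed in the statement, this verifies Definition~\ref{D04} for $T+F$, and Proposition~\ref{P04} propagates the $\mathcal{A}$-Fredholm property and the constancy of the index to all $n \geq m$, yielding $\operatorname{index}(T+F) = \operatorname{index} T$ in $K(\mathcal{A})$.

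The main obstacle is the middle step: constructing the projection $P$ and showing that the index transfer actually goes through, that is, controlling the finitely generated discrepancies between $Im T^{m}$, $Im(T+F)^{m}$, and their respective complements. This is exactly where the somewhat technical closedness hypotheses on $T^{m}(\ker\tilde{F})$, $T^{m}(\ker\tilde{F}^{\perp})$, and $(T+F)^{m}(\ker\tilde{F}^{\perp})$ are used: each one permits an application of \cite[Theorem 2.3.3]{MT} to peel off an orthogonally complementable finitely generated submodule, so that the comparison between $Im T^{m}$ and $Im(T+F)^{m}$ becomes a genuinely finite-rank perturbation to which Lemma~\ref{L06} and the classical index stability can be applied.
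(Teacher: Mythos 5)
Your opening is sound and matches the paper: $Im\,\tilde{F}$ is finitely generated, hence $\ker\tilde{F}^{\perp}$ is finitely generated, and $T^{m}(\ker\tilde{F})$ is a common closed submodule of $Im\,T^{m}$ and $Im\,(T+F)^{m}$. But the middle step, which you yourself flag as the main obstacle, is where the argument genuinely breaks, and it breaks in two ways. First, a projection $P\in B^{a}(H_{\mathcal{A}})$ with range $Im\,(T+F)^{m}$ and finitely generated kernel need not exist: for a $B$-Fredholm operator the image of $T^{m}$ can have an infinitely generated complement in $H_{\mathcal{A}}$ (take $T$ to be the orthogonal projection onto a submodule $M$ with $M\cong M^{\perp}\cong H_{\mathcal{A}}$ and $F=0$; then $Im\,T^{m}=M$ and $N(P)$ would have to contain a copy of $M^{\perp}$). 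Second, even granting such a $P$, Lemma \ref{L06} requires the operator being compressed to lie in $\mathcal{M}\Phi(H_{\mathcal{A}})$, and $T$ is only assumed $\mathcal{A}$-$B$-Fredholm, so the conclusion $\operatorname{index}PT_{\mid_{R(P)}}=\operatorname{index}T$ is not available; indeed $\operatorname{index}T$ is only \emph{defined} through the restriction $T_{\mid_{Im\,T^{m}}}$.

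The paper's route avoids both problems by never leaving the images. The ambient module for the first application of Lemma \ref{L06} is $Im\,T^{m}\cong H_{\mathcal{A}}$, on which $T_{\mid_{Im\,T^{m}}}$ \emph{is} $\mathcal{A}$-Fredholm, and the projection is onto the common core $T^{m}(\ker\tilde{F})$ along $N$, where $Im\,T^{m}=T^{m}(\ker\tilde{F})\oplus N$ and $N=Q(T^{m}(\ker\tilde{F}^{\perp}))$ is finitely generated because $\ker\tilde{F}^{\perp}$ is ($Q$ being the orthogonal projection onto $T^{m}(\ker\tilde{F})^{\perp}$) --- a point your sketch gestures at but does not nail down. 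This gives $\operatorname{index}T=\operatorname{index}PT_{\mid_{T^{m}(\ker\tilde{F})}}$. The finite-rank perturbation then takes place entirely on $T^{m}(\ker\tilde{F})\cong H_{\mathcal{A}}$ (Dupr\'e--Fillmore) via \cite[Lemma 2.7.13]{MT}, and a \emph{second} application of Lemma \ref{L06}, now with ambient module $Im\,(T+F)^{m}=T^{m}(\ker\tilde{F})\oplus N'$ (with $N'$ finitely generated and $Im\,(T+F)^{m}\cong H_{\mathcal{A}}$ by Kasparov stabilization), transfers the index back up to $(T+F)_{\mid_{Im\,(T+F)^{m}}}$. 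So the missing idea is precisely this two-sided use of Lemma \ref{L06} relative to the common core $T^{m}(\ker\tilde{F})$, rather than a single compression performed from $H_{\mathcal{A}}$.
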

\begin{proof}
	Observe first that since $\tilde{F} \in B^{a}(H_{\mathcal{A}})  $ and $Im \tilde{F}  $ is closed by assumption, we have that $\ker \tilde{F}  $ is orthogonally complementable in $H_{\mathcal{A}}  $ by \cite[Theorem 2.3.3]{MT}. Hence $T^{m}_{\mid_{\ker \tilde{F}}}  $ is adjointable. Since $T^{m}(\ker \tilde{F})   $ is closed by assumption, again by \cite[Theorem 2.3.3]{MT} we have that $T^{m}(\ker \tilde{F})  $ is orthogonally complementable in $H_{\mathcal{A}}  .$ As $T^{m}(\ker \tilde{F}) \subseteq  Im T^{m} \cap Im (T+F)^{m},$ it is easy to see that $Im T^{m}=T^{m}(\ker \tilde{F}) \oplus N,$ $Im (T+F)^{m}= T^{m} (\ker \tilde{F})  \oplus N^{\prime} $ for some closed submodules $N,N^{\prime}  .$ Now, since $Im \tilde{F}  $ is finitely generated, it follows that $\ker \tilde{F}^{\perp}  $ is finitely generated also, as $ \tilde{F}_{\mid_{\ker \tilde{F}^{\perp}}} $ is an isomorphism onto $Im \tilde{F}   .$ Moreover, $Im T^{m}= T^{m}(\ker \tilde{F})+T^{m}(\ker \tilde{F}^{\perp}),$ $Im (T+F)^{m}=T^{m}(\ker \tilde{F})+ (T+F)^{m}(\ker \tilde{F}^{\perp}) .$ \\
	Let $Q$ denote the orthogonal projection onto $T^{m}(\ker \tilde{F})^{\perp}.$ It is clear then that 
	$N=Q( Im T^{m} ) =Q (T^{m}(\ker \tilde{F}^{\perp}))  $ and $N^{\prime}=Q(Im (T+F)^{m})=Q((T+F)^{m}) (\ker \tilde{F}^{\perp})).$ As $\ker \tilde{F}^{\perp}$ is finitely generated, it follows that $N,N^{\perp}$ are finitely generated also. Since $T_{\mid_{ Im T^{m}}}  $ is $\mathcal{A}$-Fredholm, by previous lemma it follows that $\sqcap T_{\mid_{ T^{m}(\ker \tilde{F})}}  $ is $\mathcal{A}$-Fredholm, where $\sqcap  $ denotes the orthogonal projection onto $  T^{m}(\ker \tilde{F}) $ along $N.$ But, since $T^{m}(\ker \tilde{F})^{\perp}=N \oplus Im {T^{m}}^{\perp}  , $ ($Im {T^{m}}  $ is orthogonally complementable again by \cite[Theorem 2.3.3]{MT}), if we let $P  $ denote the orthogonal projection onto $ T^{m}(\ker \tilde{F}) $ along $T^{m}(\ker \tilde{F})^{\perp}  ,$ then $ PT_{\mid_{ T^{m}(\ker \tilde{F})}} $ is an $\mathcal{A}$-Fredholm operator on $T^{m}(\ker \tilde{F})  ,$ as $PT_{\mid_{ T^{m}(\ker \tilde{F})}}= \sqcap T_{\mid_{ T^{m}(\ker \tilde{F})}} .$ By previous lemma, since $Im T^{m} \cong H_{\mathcal{A}}  $ by assumption, it follows that $index T= index T_{\mid_{ Im T^{m}}}=index  PT_{\mid_{ T^{m}(\ker \tilde{F})}}  .$ Now since $Im T^{m} \cong H_{\mathcal{A}}, Im T^{m}= T^{m}(\ker \tilde{F}) \oplus N   $ and $N$ is finitely generated, by Dupre Filmore theorem it follows easily that $T^{m}(\ker \tilde{F}) \cong H_{\mathcal{A}}.$ Since $PF_{\mid_{ T^{m}(\ker \tilde{F})}} \in K(T^{m}(\ker \tilde{F})) ,$ it follows from \cite[Lemma 2.7.13]{MT} that $P(T+F)_{\mid_{ T^{m}(\ker \tilde{F})}}  $ is an $\mathcal{A}$-Fredholm operator on $T^{m}(\ker \tilde{F})  ,$ and $index PT_{\mid_{ T^{m}(\ker \tilde{F})}} = index P(T+F)_{\mid_{ T^{m}(\ker \tilde{F})}} .$ But $ Im(T+F)^{m}=T^{m}(\ker \tilde{F}) \oplus N^{\prime} $ where $ N^{\prime} $ is finitely generated. Hence $P(T+F)_{\mid_{ T^{m}(\ker \tilde{F})}}=\tilde{\sqcap} T_{\mid_{ T^{m}(\ker \tilde{F})}} $ where $\tilde{\sqcap}  $ denotes the orthogonal projection onto $T^{m}(\ker \tilde{F})  $ along $N^{\prime}  ,$ as 	$(T+F)( T^{m}(\ker \tilde{F})) =(T+F)^{m+1} (\ker \tilde{F}) \subseteq Im (T+F)^{m+1} \subseteq Im (T+F)^{m} .$ In addition, since $N^{\prime}  $ is finitely generated and $T^{m}(\ker \tilde{F})\cong H_{\mathcal{A}}  ,$  by Kasparov stabilization theorem, it follows that $Im (T+F)^{m} \cong H_{\mathcal{A}}  .$ By previous lemma, since $\tilde{\sqcap} T_{\mid_{ T^{m}(\ker \tilde{F})}}  $ is an $\mathcal{A}$-Fredholm operator on $T^{m}(\ker \tilde{F}) , Im (T+F)^{m} \cong H_{\mathcal{A}}  $ and $ N^{\prime}  $ is finitely generated, it follows that $(T+F)_{\mid_{ Im (T+F)^{m}}}  $ is $\mathcal{A}$-Fredholm and $index (T+F)=index  (T+F)_{\mid_{ Im (T+F)^{m}}}= index (\tilde{\sqcap} (T+F)_{\mid_{ T^{m}(\ker \tilde{F})}}  .$
\end{proof} 
\begin{remark}
	Proposition \ref{P04} hold even if $ Im F^{n} $ is not isomorphic to $H_{\mathcal{A}}  $ because $Im F^{n}$ are countably generated being direct summand in $H_{\mathcal{A}}  $  by \cite[Theorem 2.3.3]{MT}  Namely, if $M$ a countably generated Hilbert $C^{*}$-module, then by Kasparov stabilization theorem, $M \oplus H_{\mathcal{A}} \cong H_{\mathcal{A}}  .$ Given an operator $ F \in B^{a}(M) ,$ we may consider the induced operator $F^{\prime} \in B^{a}(M \oplus H_{\mathcal{A}})  $ given by the operator matrix
	$\left\lbrack
	\begin{array}{ll}
	F & 0 \\
	0 & I \\
	\end{array}
	\right \rbrack
	.$
	It is clear then that if $M=M_{1} \tilde \oplus N_{1}‎‎\stackrel{F}{\longrightarrow} M_{2} \tilde \oplus N_{2}=M $ is a decomposition w.r.t. which $F$ has the matrix
	$\left\lbrack
	\begin{array}{ll}
	F_{1} & 0 \\
	0 & F_{4} \\
	\end{array}
	\right \rbrack
	$
	where $F_{1}$ is an isomorphism, then $F^{\prime} $ has the matrix
	$\left\lbrack
	\begin{array}{ll}
	F_{1}^{\prime} & 0 \\
	0 & F_{4}^{\prime} \\
	\end{array}
	\right \rbrack
	$
	w.r.t. the decomposition.\\
	$$ M { \oplus} H_{\mathcal{A}}=(M_{1} \oplus H_{\mathcal{A}}) \tilde{ \oplus} (N_{1} \oplus \lbrace 0 \rbrace ) \stackrel{F^{\prime}}{\longrightarrow} (M_{2} \oplus H_{\mathcal{A}}) \tilde{ \oplus} (N_{2} \oplus \lbrace 0 \rbrace )=M \oplus H_{\mathcal{A}}$$
	where $F_{1}^{\prime}  $ is an isomorphism. It follows then that any semi-Fredholm decomposition for $F$ gives a rise in a natural way to a semi-Fredholm decomposition of $ F^{\prime}.$ Moreover, $F^{\prime}  $ can be viewed as an operator in $B^{a}(H_{\mathcal{A}})  $ as $M \oplus H_{\mathcal{A}} \cong H_{\mathcal{A}}.$ It follows easily then that $index F$ is well defined as $index F^{\prime}  $ is so, (when $F \in \mathcal{M}\Phi(M))  $ and in this case $index F=index F^{\prime} .$ Thus \cite[Theorem 2.7.9]{MT} holds for $F.$ Similarly \cite[Lemma 2.7.11]{MT}, \cite[Lemma 2.16]{I}, \cite[Lemma 2.17]{I} also hold for $F$.
\end{remark}

\end{document}